\documentclass[a4paper,12pt, reqno]{amsart}
\usepackage{amsfonts, color}
\usepackage[textwidth=460pt]{geometry}
\usepackage{amstext, amsthm, amssymb, amsmath}
\usepackage{mathrsfs}
\usepackage{graphicx}
\usepackage{color}
\usepackage[linesnumbered,lined,ruled,commentsnumbered]{algorithm2e}

\theoremstyle{plain}
\begingroup
\newtheorem{theorem}{Theorem}[section]
\newtheorem{lemma}[theorem]{Lemma}
\newtheorem{proposition}[theorem]{Proposition}
\newtheorem{corollary}[theorem]{Corollary}
\endgroup

\theoremstyle{definition}
\begingroup
\newtheorem{defn}{Definition}

\endgroup

\theoremstyle{remark}
\newtheorem{remark}{Remark}

\numberwithin{equation}{section}

\newcommand{\J}{\mathcal{J}}
\newcommand{\R}{\mathbb{R}}
\newcommand{\F}{\mathcal{F}}
\newcommand{\U}{\mathcal{U}}

\newcommand{\e}{\varepsilon}
\newcommand{\weak}{\rightharpoonup}
\newcommand{\G}{\mathcal{G}}
\newcommand{\xx}{\mathbf{x}}
\newcommand{\bfa}{\mathbf{a}}
\newcommand{\FF}{\mathbf{F}}

\newcommand{\be}{\beta}
\newcommand{\NN}{\mathbb{N}}
\newcommand{\xth}{x^\theta}
\newcommand{\X}{\mathcal{X}}

\begin{document}
\title[Optimal Control of ensembles
of dynamical systems]
{Optimal Control of ensembles of dynamical systems}
\author[A. Scagliotti]{Alessandro Scagliotti}

%\address[A.~Scagliotti]{``School of Computation, Information and Technology'', Technical University of Munich,
%Garching b. M\"unchen,
%Germany \\ \&
%Munich Center for Machine Learning (MCML).}
%\email{scag@ma.tum.de}

\begin{abstract}
In this paper we consider the problem of the optimal control of an ensemble of affine-control systems. After proving the well-posedness of the minimization problem under examination, we establish a $\Gamma$-convergence result that allows us to substitute the original (and usually infinite) ensemble with a sequence of finite increasing-in-size sub-ensembles. The solutions of the optimal control problems involving these sub-ensembles provide approximations in the $L^2$-strong topology of the minimizers of the original problem. 
Using again a $\Gamma$-convergence argument, we manage to derive a Maximum Principle for ensemble optimal control problems with end-point cost. 
Moreover, in the case of finite sub-ensembles, we can address the minimization of the related cost through numerical schemes. In particular, we propose an algorithm that consists of a subspace projection of the gradient field induced on the space of admissible controls by the approximating cost functional. In addition, we consider an iterative method based on the Pontryagin Maximum Principle. Finally, we test the algorithms on an ensemble of linear systems in $\R^2$.

\subsection*{Keywords} Optimal control,
Simultaneous control,
$\Gamma$-convergence, Gradient-based minimization,
Pontryagin Maximum Principle.

\subsection*{Mathematics Subject Classification}
49J15, 49K15, 49M05.

\end{abstract}

\maketitle

\subsection*{Acknowledgments} 
A great part of the work presented here was done while the Author was a Ph.D. candidate at Scuola Internazionale Superiore di Studi Avanzati (SISSA), Trieste, Italy. The Author acknowledges partial support from INdAM--GNAMPA.
The Author thanks Prof. Andrei Agrachev for encouragement and helpful discussions.
Finally, the Author wants to express his gratitude to two anonymous Referees, whose comments helped to improve the quality of the paper. In particular, the results presented in Section~\ref{sec:PMP_ens} were inspired by the observation of a Reviewer.

\section*{Introduction}
\label{sec:Intro}
An ensemble of control systems is a parametrized family of controlled ODEs of the form 
\begin{equation} \label{eq:general_ens_intro}
\begin{cases}
\dot x^\theta(t) = G^\theta(x^\theta(t),u(t))
& \mbox{a.e. in }[0,T],\\
x^\theta(0) = x_0^\theta,
\end{cases}
\end{equation}
where $\theta\in \Theta\subset \R^d$ is the parameter of the ensemble, $u:[0,T]\to \R^k$ is the control, and, for every $\theta\in \Theta$, $G^\theta:\R^n\times\R^k\to \R^n$ is the function that prescribes the dynamics of the corresponding system.
The peculiarity of this kind of problem is that the elements of the ensemble are \textit{simultaneously} driven by the same control $u$. This framework is particularly suitable for modeling real-world control systems affected by data uncertainty (see, e.g., \cite{RuLi12}), or the problem of controlling a large number of particles through a signal (see \cite{Bro09}). Also from the theoretical viewpoint, there is currently an active research interest in this topic. For instance, the problem of the controllability of ensembles of linear equations has been recently investigated in \cite{DS21}. In \cite{ABS16} it was proved a generalization of the Chow–Rashevskii theorem for ensembles of linear-control systems. In \cite{LK06,LK09} ensembles were studied in the framework of nuclear magnetic resonance spectroscopy. Moreover, as regards ensembles in quantum control, we report the contributions \cite{BCR10,BST15}, and we recall the recent works \cite{AuBoSi18,ChiGau18}. Finally, we mention that  the interplay between \textit{Reinforced Learning} and optimal control of systems affected by partially unknown dynamics has been 
investigated in \cite{MP18, PPF21, PPF21_2, PPF22}.

In the present paper, we focus on a particular instance of \eqref{eq:general_ens_intro}, corresponding to the case in which the dynamics has an affine dependence on the controls. More precisely, we consider ensembles with the following expression: 
\begin{equation} \label{eq:aff_ctrl_ens_intro}
\begin{cases}
\dot x^\theta (t) = F_0^\theta(\xth(t)) + F^\theta(
\xth(t))u(t) & \mbox{a.e. in }[0,1],\\
\xth(0) = \xth_0,
\end{cases}
\end{equation}
where $\theta \in \Theta\subset \R^d$ varies in a compact set, and, for every $\theta\in \Theta$, the vector field $F_0^\theta:\R^n\to\R^n$ represents the drift, while the matrix-valued application $F^\theta=(F_1^\theta,\ldots,F_k^\theta):\R^n\to\R^{n\times k}$ collects the controlled fields. We set $\U:=L^2([0,1],\R^k)$ as the space of admissible controls, and, for every $\theta\in \Theta$, the curve $x_u^\theta:[0,1]\to\R^n$ denotes the trajectory of \eqref{eq:aff_ctrl_ens_intro} corresponding to the parameter $\theta$ and to the control $u\in\U$. We are interested in the optimal control problem related to the minimization of a functional $\F:\U\to\R_+$ of the form
\begin{equation} \label{eq:intro_ens_fun}
\F(u):= \int_{\Theta}\int_0^1 
a(t,x_u^\theta(t),\theta) \,d\nu(t)d\mu(\theta)
+ \frac\beta2 ||u||_{L^2}^2
\end{equation} 
for every $u\in\U$, where $a:[0,1]\times\R^n\times\Theta\to\R_+$ is a non-negative continuous function, while $\nu,\mu$ are Borel probability measures on $[0,1]$ and $\Theta$, respectively, and  $\beta>0$ is a constant that tunes the $L^2$-squared regularization.
When the support of the probability measure $\mu$ is not reduced to a finite set of points, the minimization of the functional $\F$ is often intractable in practical situations since a single evaluation of $\F$ potentially requires the resolution of an infinite number of Cauchy problems\eqref{eq:aff_ctrl_ens_intro}. Therefore, it is natural to try to replace $\mu$ with a sequence of probability measures $(\mu_N)_{N\in\NN}$ such that each of them charges a finite subset of $\Theta$, and such that $\mu_N\weak^* \mu$ as $N\to\infty$. Then, we can consider the sequence of functionals $(\F^N)_{N\in\NN}$ defined as 
\begin{equation}\label{eq:intro_def_fun_ens_N}
\F^N(u):= \int_{\Theta}\int_0^1 
a(t,x_u^\theta(t),\theta) \,d\nu(t)d\mu_N(\theta)
+ \frac\beta2 ||u||_{L^2}^2
\end{equation}
for every $u\in \U$ and for every $N\in\NN$.
One of the goals of the present work is to study in which sense the functionals defined in \eqref{eq:intro_def_fun_ens_N} approximate the cost $\F$. It turns out that, when considering the restrictions to bounded subsets of $\U$, the sequence $(\F^N)_{N\in\NN}$ is $\Gamma$-convergent to $\F$ with respect to the {\it weak} topology of $L^2$. We report that a similar approach was undertaken in \cite{PRG}, where the authors considered ensembles of control systems in the general form \eqref{eq:general_ens_intro}, and it was proved that the \textit{averaged approximations} of the cost functional under examination are $\Gamma$-convergent to the original objective with respect to the \textit{strong} topology of $L^2$. We insist on the fact that our result is not reduced to a particular case of the one studied in \cite{PRG}. Indeed, on one hand, using the \textit{strong} topology, in \cite{PRG} it was possible to establish $\Gamma$-convergence for more general ensembles of control systems, and not only under the affine-control dynamics \eqref{eq:aff_ctrl_ens_intro}. On the other hand, in the general situation considered in \cite{PRG} the functionals of the approximating sequence are not equi-coercive (often neither coercive) in the $L^2$-\textit{strong} topology, and proving that the minimizers of the approximating functionals are (up to subsequences) convergent could be a challenging task.
However, in the case of affine-control systems we manage to prove $\Gamma$-convergence even if the space of admissible controls $\U$ is equipped with the {\it weak} topology. Moreover, if for every $N\in\NN$ we choose $u_N\in \arg\min_\U \F^N$, standard facts in the theory of $\Gamma$-convergence ensure that the sequence $(u_N)_{N\in\NN}$ is \textit{weakly} pre-compact and that each of its limiting points is a minimizer of the original functional $\F$ defined in \eqref{eq:intro_ens_fun}. What is more surprising is that --owing to the peculiar form of the cost \eqref{eq:intro_ens_fun}-- it turns out that $(u_N)_{N\in\NN}$ is also pre-compact in the $L^2${\it-strong} topology. Similar phenomena have been recently observed in \cite{S1} and \cite{S2}, respectively in the frameworks of sub-Riemannian geodesics approximations and of data-driven diffeomorphisms reconstruction.
%Motivated by this fact, we finally formulate
%two algorithms for the numerical 
%minimization of the functionals $(\F^N)_{N\in\NN}$.

In the second part of the paper, we restrict our  focus to the case of the average end-point cost, i.e., when  $\nu = \delta_{t=1}$ in the integral at the right-hand side of \eqref{eq:intro_ens_fun} and \eqref{eq:intro_def_fun_ens_N}.
In this framework, from a direct application of the classical theory, we first derive  the Pontryagin Maximum Principle for the problem of minimizing the functional $\F^N$ for $N\in\NN$. Then, using again an argument based on the $\Gamma$-convergence, we manage to formulate the Pontryagin necessary conditions for local minimizers of the functional $\F$. We report that our analysis has been inspired by the results in \cite{BK19}, where the authors establish the Maximum Principle for a large class of ensemble optimal control problems with average end-point cost. Even though our strategy is analogous to the path described in \cite{BK19} (i.e., first considering auxiliary problems involving discrete measures, and then recovering the Maximum Principle for the ensemble optimal control problem), our case is not covered by the results presented in \cite{BK19}. Namely, in \cite{BK19} it is required that, for every point in a neighborhood of an optimal trajectory, the set of the admissible velocities is bounded, and this fact is crucial to prove the continuity of the trajectories when the controls are equipped with the Ekeland metric (see \cite[Lemma~5.1]{BK19}). Moreover, we observe that in \cite{BK19} the limiting process evokes Ekeland's variational principle, while we employ $\Gamma$-convergence.
Finally, we recall that in \cite{V05} the Maximum Principle for \textit{minimax} optimal control was derived.

In the last part, we propose two numerical schemes for finite-ensemble optimal control problems with average end-point cost. 
More precisely, recalling that $\U$ is endowed with the usual Hilbert space structure, we first consider the gradient field induced by the functional $\F^N:\U\to\R_+$ on its domain. This is done by adapting to the affine-control case a result obtained in \cite{S1} for linear-control systems. Then, we construct Algorithm~\ref{alg:proj_grad_field} as the orthogonal projection of this gradient field onto a subspace $\U_M\subset \U$ such that $\dim(\U_M)<\infty$.
On the other hand, Algorithm~\ref{alg:iter_PMP} is an adaptation to our problem of an iterative scheme originally proposed in \cite{SS80}, based on the Maximum Principle. Variants of Algorithm~\ref{alg:proj_grad_field} and Algorithm~\ref{alg:iter_PMP} have been recently introduced in\cite{S2} as training procedures of a control-theoretic inspired Deep Learning architecture.
We recall that a multi-shooting technique for ensemble optimal control has been recently investigated in \cite{LGV20}.

We briefly outline the structure of this work.\\
In Section~\ref{sec:prel} we establish some preliminary results. In particular, we show that the trajectories of the ensemble \eqref{eq:aff_ctrl_ens_intro} are uniformly $C^0$-stable for $L^2$-weakly convergent sequences of admissible controls. This property is peculiar to affine-control dynamics and plays a crucial role in the other sections.\\
%In Subsection~\ref{subsec:aff_grad_field} we 
%derive the expression of the gradient field
%on the space of controls
%induced by an optimal control problem 
%with end-point cost.
In Section~\ref{sec:opt_ctrl_ens} we formulate the ensemble optimal control problem related to the minimization of the functional $\F:\U\to\R_+$ defined in \eqref{eq:intro_ens_fun}, and we prove the existence of a solution using the direct method of calculus of variations. \\
In Section~\ref{sec:G_conv} we establish the approximation results by showing that the sequence of functionals $(\F^N)_{N\in\NN}$ defined as in \eqref{eq:intro_def_fun_ens_N} are $\Gamma$-convergent to $\F$ with respect to the weak topology of $L^2$.\\
In Section~\ref{sec:grad_flow}, for every $N\in\NN$, we compute the gradient field induced by the functional $\F^N$ on the space of admissible controls, and we derive the Pontryagin Maximum Principle for the optimal control problem related to the minimization of $\F^N$. Starting from Section~\ref{sec:grad_flow} we restrict our attention to the end-point integral cost, that corresponds to the choice $\nu=\delta_{t=1}$ in \eqref{eq:intro_ens_fun}. \\
In Section~\ref{sec:PMP_ens} we prove the Maximum Principle for local minimizers of the functional $\F$, using a strategy based on $\Gamma$-convergence and the construction of auxiliary problems involving finite ensembles of control systems.  \\
In Section~\ref{sec:num_schemes} we construct two numerical schemes for the minimization of $\F^N$ in the case of end-point cost. The first method is based on the gradient field derived in Section~\ref{sec:grad_flow}, while for the second we make use of the Maximum Principle for finite ensembles.\\
Finally, in Section~\ref{sec:num_exp} we test the algorithms on an \textit{approximately controllable} ensemble of systems in $\R^2$.

\subsection*{General Notations}
We introduce below some basic notations.
For every $d\geq 1$, we consider the space 
$\R^d$ endowed with the usual Euclidean
norm $|z|_2:=\sqrt{\langle z,z\rangle_{\R^d}}$
for every $z\in\R^d$,
induced by the scalar product
\[
\langle x,y \rangle_{\R^d}:=
\sum_{i=1}^d x_iy_i \qquad
\]
for every $x=(x_1,\ldots,x_d),y=(y_1,\ldots,y_d)
\in \R^d$.
We sometimes make use of the equivalent norm
$|\cdot|_1$ defined as $|z|_1=\sum_{i=1}^d|z_i|$
for every $z\in\R^d$. We recall that the inequality
\begin{equation} \label{eq:norm_eucl_equiv}
\frac1{\sqrt{d}} |z|_2\leq |z|_1 \leq \sqrt{d}|z|_2
\end{equation}
holds for every $z\in\R^d$.

\begin{section}{Framework and Preliminary results}\label{sec:prel}
In this paper, we study ensembles of control systems in $\R^n$ with affine dependence in the control variable $u\in \R^k$. More precisely, given a compact set $\Theta$ embedded into a finite-dimensional Euclidean space, for every $\theta\in\Theta$ we are assigned an affine-control system of the form
\begin{equation} \label{eq:ens_ctrl_sys}
\begin{cases}
\dot x^\theta (t) = F_0^\theta(\xth(t)) + 
F^\theta( \xth(t))u(t),\\
\xth(0) = \xth_0,
\end{cases}
\end{equation} 
where for every $\theta\in\Theta$ we require that $F_0^\theta:\R^n\to\R^n$ and $F^\theta:\R^n\to \R^{n\times k}$ are Lipschitz-continuous applications. We stress the fact that the control $u:[0,1]\to\R^k$ does not depend on $\theta$, so it is the same for every control system of the ensemble. 
Let us introduce $F_0:\R^n\times \Theta\to\R^n$ and $F:\R^n\times \Theta \to \R^{n\times k}$ defined respectively as 
\begin{equation} \label{eq:ens_fields}
F_0(x,\theta) := F_0^\theta(x) \qquad\mbox{and}\qquad
F(x,\theta) := F^\theta(x)
\end{equation}
for every $(x,\theta)\in \R^n\times\Theta$.
We assume that $F_0$ and $F$ are Lipschitz-continuous mappings, i.e., that there exists a constant $L>0$ such that
\begin{equation} \label{eq:lips_F_0}
|F_0(x_1,\theta_1) - F_0(x_2,\theta_2)|_2 \leq
L\big(|x_1-x_2|_2 + |\theta_1-\theta_2|_2\big)
\end{equation}
and 
\begin{equation} \label{eq:lips_F}
\sup_{i=1,\ldots,k} |F_i(x_1,\theta_1) - F_i(x_2,\theta_2)|_2 \leq
L\big(|x_1-x_2|_2 + |\theta_1-\theta_2|_2\big)
\end{equation}
for every $(x_1,\theta_1),(x_2,\theta_2)\in \R^n\times\Theta$. 
In \eqref{eq:lips_F} we used $F_i(x,\theta)$ to denote the vector obtained by taking the $i^{\mathrm{th}}$ column of the matrix $F(x,\theta)$, for every $i=1,\ldots,k$. Similarly, for every $\theta\in\Theta$ we shall use $F^\theta_i:\R^n\to\R^n$ to denote the vector field corresponding to the $i^{\mathrm{th}}$ column of the matrix-valued application $F^\theta:\R^n\to\R^{n\times k}$.
We observe that \eqref{eq:lips_F_0}-\eqref{eq:lips_F} imply that the vector fields $F_0^\theta,F_1^\theta,\ldots, F_k^\theta$ are uniformly Lipschitz-continuous as $\theta$ varies in $\Theta$.
Another consequence of the Lipschitz-continuity conditions \eqref{eq:lips_F_0}-\eqref{eq:lips_F} is that the vector fields constituting the affine-control system \eqref{eq:ens_ctrl_sys} have sub-linear growth, uniformly with respect to the dependence on $\theta$. Namely, we have that there exists a constant $C>0$ such that 
\begin{equation}\label{eq:sub_lin_F_0}
\sup_{\theta\in\Theta} |F_0^\theta(x)|_2\leq 
C\big(|x|_2+1)
\end{equation}
and
\begin{equation}\label{eq:sub_lin_F}
\sup_{\theta\in\Theta}\, \sup_{i=1,\ldots,k}
|F^\theta_i(x)|_2\leq 
C\big(|x|_2+1)
\end{equation}
for every $x\in\R^n$.
Finally, let us consider the application $x_0:\Theta\to\R^n$ that prescribes the initial state of \eqref{eq:ens_ctrl_sys}, i.e., 
\begin{equation} \label{eq:def_x0}
x_0(\theta):= x_0^\theta
\end{equation}
for every $\theta \in \Theta$. We assume that $x_0$ is continuous. As a matter of fact, there exists a constant $C'>0$ such that
\begin{equation} \label{eq:bound_ens_init_val}
\sup_{\theta \in \Theta} |x_0(\theta)|_2 \leq C'.
\end{equation}
We set $\U:=L^2([0,1],\R^k)$ as the space of admissible controls, and we equip it with the usual Hilbert space structure given by the scalar product
\begin{equation} \label{eq:def_scal_prod}
\langle u,v\rangle_{L^2}
:= \int_0^1 \langle u(t),v(t) \rangle_{\R^k} \,dt
\end{equation}
for every $u,v\in\U$. 
For every $u\in\U$ and $\theta\in\Theta$, the curve $x_u^\theta:[0,1]\to\R^n$ denotes the solution of the Cauchy problem \eqref{eq:ens_ctrl_sys} corresponding to the system identified by $\theta$ and to the admissible control $u$.
We recall that, for every $u\in\U$ and $\theta\in\Theta$, the existence and uniqueness of the solution of \eqref{eq:ens_ctrl_sys} are guaranteed by the Carath\'eodory Theorem (see, e.g., \cite[Theorem~5.3]{H80}). 
Given $u\in\U$, we describe the evolution of the ensemble of control systems \eqref{eq:ens_ctrl_sys} through the mapping $X_u:[0,1]\times\Theta\to\R^n$ defined as follows:
\begin{equation} \label{eq:def_evol_ens}
X_u(t,\theta) := x_u^\theta(t)
\end{equation}
for every $(t,\theta)\in[0,1]\times\Theta$. 
In other words, for every $u\in\U$ the application $X_u$ collects the trajectories of the ensemble of control systems \eqref{eq:ens_ctrl_sys}.
We study the properties of the mapping $X_u$ in Subsection~\ref{subsec:mapp_Xu} below.  
Before proceeding, we recall some elementary facts in functional analysis.

\begin{subsection}{General results in functional 
analysis} \label{subsec:fun_an_recall}
We begin by recalling some basic facts about the space of admissible controls $\U:=L^2([0,1],\R^k)$.
First of all, the linear inclusion $\U\hookrightarrow L^1([0,1],\R^k)$ is continuous, and from \eqref{eq:norm_eucl_equiv} and the Jensen inequality it follows that 
\begin{equation} \label{eq:norm_ineq}
||u||_{L^1} := \int_0^1 |u(\tau)|_1 \,d\tau
\leq  \sqrt k ||u||_{L^2}
\end{equation}
for every $u\in\U$. 
We shall often make use of $L^2$-weakly convergent sequences. Given a sequence $(u_m)_{m\in\NN}\subset \U$, we say that $(u_m)_{m\in\NN}$ is convergent to $u\in\U$ with respect to the weak topology of $L^2$ if 
\[
\lim_{m\to\infty}\langle v,u_m\rangle_{L^2} =
\langle v,u\rangle_{L^2}
\]
for every $v\in\U$, and we write $u_m\weak_{L^2} u$ as $m\to\infty$.
If $u_m\weak_{L^2}u$ as $m\to\infty$, then we have  
\begin{equation} \label{eq:norm_weak_semicont}
||u||_{L^2} \leq 
\liminf_{m\to\infty}||u_m||_{L^2}.
\end{equation}
Finally, we recall that any bounded sequence $(u_m)_{m\in\NN}$ is pre-compact with respect to the $L^2$-weak topology. For further details on weak topologies of Banach spaces, the reader is referred to \cite[Chapter~3]{B11}.
We conclude this part with the following fact concerning the one-dimensional Sobolev space $H^1([a,b],\R^d):= W^{1,2}([a,b],\R^d)$. For a complete survey on the topic, we recommend \cite[Chapter~8]{B11}.

\begin{proposition} \label{prop:sob_Holder}
Let $f:[a,b]\to\R^d$ be a function in
$H^1([a,b],\R^d)$. Then, $f$ is
H\"older-continuous with exponent $\frac12$,
namely
\begin{equation*}
|f(t_1)-f(t_2)|_2 \leq ||f'||_{L^2}|t_1-t_2|^{\frac12} 
\end{equation*}
for every $t_1,t_2\in[a,b]$, where 
$f'\in L^2([0,1],\R^d)$ denotes the weak 
derivative of $f$. 
\end{proposition}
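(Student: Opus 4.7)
The plan is to use the fundamental theorem of calculus for Sobolev functions combined with the Cauchy--Schwarz inequality, which is the standard route for embeddings of one-dimensional Sobolev spaces into H\"older spaces.

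First, I would invoke the well-known fact that any $f\in H^1([a,b],\R^d)$ admits a continuous representative (this is the one-dimensional Sobolev embedding, see \cite[Chapter~8]{B11}), and that this representative satisfies
\[
f(t_1) - f(t_2) = \int_{t_2}^{t_1} f'(s)\, ds
\]
for every $t_1,t_2 \in [a,b]$, where $f'$ is the weak derivative. Thus the statement should be interpreted as holding for this continuous representative.

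Next, I would estimate the difference componentwise and then in the Euclidean norm. Taking norms inside the integral and applying the Cauchy--Schwarz inequality to the product of $|f'(s)|_2$ and the constant function $1$ on the interval $[\min(t_1,t_2), \max(t_1,t_2)]$ yields
\[
|f(t_1) - f(t_2)|_2 \leq \int_{\min(t_1,t_2)}^{\max(t_1,t_2)} |f'(s)|_2 \, ds \leq \left(\int_{\min(t_1,t_2)}^{\max(t_1,t_2)} |f'(s)|_2^{2} \, ds\right)^{1/2} |t_1 - t_2|^{1/2}.
\]
Since the $L^2$-norm of $|f'|_2$ on the subinterval is dominated by $\|f'\|_{L^2([a,b])}$, the desired H\"older estimate with exponent $\tfrac12$ follows immediately.

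There is no real obstacle here: the main subtlety is only the identification of $f$ with its continuous representative and the validity of the fundamental theorem of calculus for $H^1$ functions in one dimension, both of which are standard. The rest is a one-line application of Cauchy--Schwarz.
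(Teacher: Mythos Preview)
Your argument is correct and is exactly the standard proof of this elementary embedding. Note, however, that the paper does not actually supply a proof of this proposition: it is stated as a recalled fact in the preliminaries, with a reference to \cite[Chapter~8]{B11}, so there is no ``paper's own proof'' to compare against --- your write-up simply fills in the omitted standard argument.
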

%\begin{proof}
%The fact that
%$f\in H^1([a,b],\R^d)$
%implies that it is absolutely continuous
%(see, e.g., \cite[Theorem~8.2]{B11}). Thus,
%using the Cauchy-Schwartz inequality,
%we deduce that
%\begin{equation*}
%|f(t_1)-f(t_2)|_2\leq \int_{[t_1,t_2]}|f'(\tau)|_2
%\, d\tau \leq 
%\left( \int_{[t_1,t_2]}|f'(\tau)|_2^2 \, d\tau
%\right)^{\frac12} |t_1-t_2|^{\frac12}
%\end{equation*}
%for every $t_1,t_2\in[a,b]$, and 
%this implies the thesis.
%\end{proof}
\end{subsection}

\begin{subsection}{Trajectories of the controlled
ensemble}  \label{subsec:mapp_Xu}

We now investigate the evolution of the ensemble
of control systems \eqref{eq:ens_ctrl_sys}
when we consider a sequence of $L^2$-weakly
convergent admissible controls.
The proof is postponed to the end of the present
subsection.

\begin{proposition} \label{prop:unif_conv_map_X}
Let us consider a sequence of admissible controls
$(u_m)_{m\in\NN} \subset \U$
such that $u_m\weak_{L^2}u_\infty$ as
$m\to\infty$.
For every $m\in \NN \cup \{ \infty\}$,
let $X_m:[0,1]\times \Theta\to \R^n$
be the application defined in
\eqref{eq:def_evol_ens} that collects the
trajectories of the ensemble of control
systems \eqref{eq:ens_ctrl_sys} 
corresponding to the admissible control $u_m$.
Therefore, we have that
\begin{equation} \label{eq:unif_conv_map_X}
\lim_{m\to\infty}\, \sup_{(t,\theta)\in [0,1]\times\Theta} |X_m(t,\theta) - X_\infty(t,\theta)|_2
=0. 
\end{equation}
\end{proposition}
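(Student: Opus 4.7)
The plan is threefold: (i) obtain a uniform a priori bound on $\{X_m\}$; (ii) establish $C^0([0,1])$-convergence for each fixed $\theta$ by isolating a weak-convergence remainder and invoking Gronwall; (iii) upgrade to uniformity in $\theta$ through an $L^2$-compactness argument. For (i), since $u_m\weak_{L^2} u_\infty$, the sequence $(||u_m||_{L^2})_{m\in\NN\cup\{\infty\}}$ is bounded, and by \eqref{eq:norm_ineq} so is $(||u_m||_{L^1})$; combined with the sub-linear growth \eqref{eq:sub_lin_F_0}-\eqref{eq:sub_lin_F} and the initial-data bound \eqref{eq:bound_ens_init_val}, a standard Gronwall estimate yields $R>0$ with $|X_m(t,\theta)|_2\le R$ for every $m\in\NN\cup\{\infty\}$, $t\in[0,1]$ and $\theta\in\Theta$. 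On the compact set $\{|x|_2\le R\}\times\Theta$, the Lipschitz assumptions \eqref{eq:lips_F_0}-\eqref{eq:lips_F} then bound $F_0^\theta$ and $F_1^\theta,\ldots,F_k^\theta$ uniformly.

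For (ii), fix $\theta\in\Theta$ and subtract the integral forms of the two equations:
\begin{align*}
X_m(t,\theta)-X_\infty(t,\theta) &= \int_0^t\bigl[F_0^\theta(X_m)-F_0^\theta(X_\infty)\bigr]\,ds + \int_0^t\bigl[F^\theta(X_m)-F^\theta(X_\infty)\bigr]u_m\,ds \\
&\quad + \int_0^t F^\theta(X_\infty)(u_m-u_\infty)\,ds.
\end{align*}
Via \eqref{eq:lips_F_0}-\eqref{eq:lips_F}, the first two integrals are dominated by $L\int_0^t(1+|u_m(s)|_1)\,|X_m(s,\theta)-X_\infty(s,\theta)|_2\,ds$. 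Setting
\[
\varepsilon_m(\theta):=\sup_{t\in[0,1]}\Bigl|\int_0^t F^\theta(X_\infty(s,\theta))(u_m(s)-u_\infty(s))\,ds\Bigr|_2,
\]
Gronwall's inequality gives $\sup_{t\in[0,1]}|X_m(t,\theta)-X_\infty(t,\theta)|_2\le C\,\varepsilon_m(\theta)$ with a constant $C$ independent of $m$ and $\theta$ (it depends only on $L$ and $\sup_m||u_m||_{L^1}$).

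Step (iii), the main obstacle, is to show $\varepsilon_m(\theta)\to 0$ \emph{uniformly} in $\theta\in\Theta$. Define $\Phi:[0,1]\times\Theta\to L^2([0,1],\R^{n\times k})$ by $\Phi(t,\theta):=\mathbf{1}_{[0,t]}(\cdot)\,F^\theta(X_\infty(\cdot,\theta))$, and set $K:=\Phi([0,1]\times\Theta)$. Then $\varepsilon_m(\theta)\le\sup_{\phi\in K}\bigl|\int_0^1\phi(s)(u_m(s)-u_\infty(s))\,ds\bigr|_2$. Reading this bilinear pairing component-wise and using the standard fact that weak convergence in $L^2$ is uniform on compact subsets (an equi-Lipschitz-plus-pointwise-convergence argument, since $(u_m-u_\infty)$ is norm-bounded), it suffices to prove $K$ compact in $L^2$, which in turn follows once $\Phi$ is shown to be continuous. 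Continuity of $\Phi$ in $t$ is dominated convergence using the uniform bound from (i); continuity in $\theta$ reduces, via \eqref{eq:lips_F}, to the $C^0([0,1])$-continuity of $\theta\mapsto X_\infty(\cdot,\theta)$. The latter is obtained by yet another Gronwall estimate that subtracts the equations for $\theta_1,\theta_2$ at the common control $u_\infty$ and exploits the joint Lipschitz conditions \eqref{eq:lips_F_0}-\eqref{eq:lips_F} together with the continuity of $\theta\mapsto x_0^\theta$. This uniform-in-$\theta$ upgrade is the unique nontrivial point of the argument; all other steps are standard Gronwall estimates.
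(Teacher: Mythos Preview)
Your argument is correct and takes a genuinely different route from the paper's. The paper proceeds via Ascoli--Arzel\`a: it establishes uniform equi-boundedness and equi-continuity of the full family $\{X_m\}_{m\in\NN}$ on $[0,1]\times\Theta$ (through a chain of Gronwall estimates collected in the appendix), deduces pre-compactness in $C^0([0,1]\times\Theta,\R^n)$, and then identifies the unique limit point by pointwise convergence---the latter obtained by embedding each affine system into a linear-control system and citing an external result from \cite{S1}. Your approach instead produces the explicit Gronwall bound $\sup_t|X_m-X_\infty|\le C\,\varepsilon_m(\theta)$ and then upgrades $\varepsilon_m(\theta)\to 0$ to uniformity in $\theta$ via compactness of the image $K=\Phi([0,1]\times\Theta)\subset L^2$ and the standard fact that weak convergence is uniform on norm-compact sets. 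The trade-offs: the paper's route is structurally clean once the auxiliary lemmas are in place, but it relies on an external citation for the pointwise limit and requires equi-continuity of \emph{all} the $X_m$; your route is fully self-contained, makes the dependence on the weak-convergence remainder explicit, and---notably---needs only the continuity of $\theta\mapsto X_\infty(\cdot,\theta)$ for the single limiting control, not the full equi-continuity, at the price of a slightly more hands-on compactness step in $L^2$.
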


\begin{remark}\label{rmk:aff_ctrl_C0_conv}
Proposition~\ref{prop:unif_conv_map_X} is the cornerstone of the theoretical results presented in this paper. Indeed, the fact fact that the trajectories of the ensemble \eqref{eq:ens_ctrl_sys} are uniformly convergent when the corresponding controls are $L^2$-weakly convergent is used both to prove the existence of optimal controls (see Theorem~\ref{thm:exist_minim}) and to establish the $\Gamma$-convergence result (see Theorem~\ref{thm:G_conv}). We stress that the fact that the systems in the ensemble \eqref{eq:ens_fields} have affine dependence in the controls is crucial for the proof of Proposition~\ref{prop:unif_conv_map_X}.
\end{remark}

In view of the next auxiliary result, we
introduce some notations. 
For every $\theta\in \Theta$, we define 
$\tilde F^\theta:\R^n\to\R^{n\times(k+1)}$
as follows:
\begin{equation}\label{eq:aux_lin_ctrl}
\tilde F^\theta(x):= (F_0^\theta(x), F^\theta(x)),
\end{equation}
for every $x\in\R^n$, i.e., 
we add the column $F^\theta_0(x)$ to the
$n\times k$ matrix $F^\theta(x)$.
Similarly, for every $u\in\U=
L^2([0,1],\R^{k})$, we consider 
the extended control 
$\tilde{u}\in \tilde \U:= L^2([0,1],\R^{k+1})$
defined as
\begin{equation}\label{eq:aux_ext_ctrl}
\tilde u(t) = (1,u(t))^T
\end{equation}
for every $t\in[0,1]$, i.e., we add the 
component $u_0= 1$ to the column-vector
$u(t)$.

\begin{lemma} \label{lem:point_conv_X_m}
Let us consider a sequence of admissible controls
$(u_m)_{m\in\NN} \subset \U$
such that $u_m\weak_{L^2}u_\infty$ as
$m\to\infty$.
For every $m\in \NN \cup \{ \infty\}$
and for every $\theta \in \Theta$, let
$x_m^\theta:[0,1]\to\R^n$ be the solution
of \eqref{eq:ens_ctrl_sys} corresponding to the
ensemble parameter $\theta$ and to the admissible
control $u_m$.
Then, for every
$\theta \in \Theta$ we have
\begin{equation}\label{eq:point_conv_X_m}
\lim_{m\to\infty}||x_m^\theta -x_\infty^\theta||_{C^0}=0.
\end{equation}
\end{lemma}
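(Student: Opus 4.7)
The plan is to fix $\theta \in \Theta$ and argue by the standard \emph{weak-strong pairing} scheme, exploiting the fact that the dynamics is affine in $u$. The extended formulation \eqref{eq:aux_lin_ctrl}-\eqref{eq:aux_ext_ctrl} lets us write the Cauchy problem as the single integral equation
\begin{equation*}
x_m^\theta(t) = x_0^\theta + \int_0^t \tilde F^\theta(x_m^\theta(s))\,\tilde u_m(s)\,ds,
\end{equation*}
where $\tilde u_m \weak_{L^2} \tilde u_\infty$ in $L^2([0,1],\R^{k+1})$, by Banach--Steinhaus the sequence $(\tilde u_m)_m$ is uniformly bounded in $L^2$, and consequently in $L^1$ by \eqref{eq:norm_ineq}.

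The first step is to show that $(x_m^\theta)_{m\in\NN}$ is pre-compact in $C^0([0,1],\R^n)$. Using the sub-linear growth bounds \eqref{eq:sub_lin_F_0}-\eqref{eq:sub_lin_F}, the initial-value bound \eqref{eq:bound_ens_init_val}, and the $L^1$-bound on $\tilde u_m$, a Grönwall-type estimate on the integral equation yields a constant $R > 0$, independent of $m$, such that $\sup_{t\in[0,1]} |x_m^\theta(t)|_2 \leq R$ for every $m$. Feeding this back into the ODE, $\dot x_m^\theta(t) = \tilde F^\theta(x_m^\theta(t))\tilde u_m(t)$ is bounded in $L^2$ (by the uniform bound on the fields along the trajectories and the $L^2$-bound on $\tilde u_m$). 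Hence $(x_m^\theta)_m$ is bounded in $H^1([0,1],\R^n)$, and Proposition~\ref{prop:sob_Holder} together with Ascoli--Arzelà yields pre-compactness in the uniform topology.

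The second step is to identify the limit. Let $(x_{m_j}^\theta)_j$ be any subsequence with $x_{m_j}^\theta \to y$ in $C^0$. For each $t \in [0,1]$, we split
\begin{equation*}
\int_0^t \tilde F^\theta(x_{m_j}^\theta(s))\tilde u_{m_j}(s)\,ds = \int_0^t \bigl[\tilde F^\theta(x_{m_j}^\theta(s)) - \tilde F^\theta(y(s))\bigr]\tilde u_{m_j}(s)\,ds + \int_0^t \tilde F^\theta(y(s))\tilde u_{m_j}(s)\,ds.
\end{equation*}
The first piece vanishes as $j \to \infty$ because $\tilde F^\theta$ is uniformly continuous on a neighborhood of the trajectory $y$, so $\tilde F^\theta(x_{m_j}^\theta) \to \tilde F^\theta(y)$ uniformly, and $\|\tilde u_{m_j}\|_{L^2}$ is bounded, so Cauchy--Schwarz closes the estimate. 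The second piece converges to $\int_0^t \tilde F^\theta(y(s))\tilde u_\infty(s)\,ds$ because $\mathbf{1}_{[0,t]}(s)\tilde F^\theta(y(s))$ is a fixed bounded (hence $L^2$) function, and $\tilde u_{m_j} \weak_{L^2} \tilde u_\infty$. Thus $y$ satisfies the same integral equation as $x_\infty^\theta$, and the Carathéodory uniqueness theorem forces $y = x_\infty^\theta$. Since every convergent subsequence has the same limit, the full sequence converges, giving \eqref{eq:point_conv_X_m}.

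The main technical obstacle is the passage to the limit in the bilinear term $\int_0^t F^\theta(x_m^\theta(s))u_m(s)\,ds$, in which $u_m$ converges only weakly: weak convergence is insufficient for products. The resolution is precisely the affine structure of \eqref{eq:ens_ctrl_sys}, which permits the $H^1$-bound on $x_m^\theta$ (and therefore the strong $C^0$-convergence of the states), allowing the classical weak-strong pairing to close. This is the reason why Remark~\ref{rmk:aff_ctrl_C0_conv} emphasizes that affine dependence on the controls is crucial.
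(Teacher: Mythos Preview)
Your proof is correct and follows the same conceptual route as the paper: both fix $\theta$, pass to the extended linear-control formulation \eqref{eq:aux_lin_ctrl}--\eqref{eq:aux_ext_ctrl}, and observe that $\tilde u_m\weak_{L^2}\tilde u_\infty$. The only difference is that the paper then invokes \cite[Lemma~7.1]{S1} as a black box, whereas you unpack that lemma in full: the Gr\"onwall/$H^1$ bound plus Ascoli--Arzel\`a for pre-compactness, and the weak--strong splitting of the integral term to identify the limit. Your self-contained argument is exactly the kind of proof one would expect behind the cited result, so there is no genuine divergence in strategy.
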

\begin{proof}
Let us fix $\theta\in\Theta$. 
By means of the 
matrix-valued function $\tilde F:\R^n\to
\R^{n\times(k+1)}$ and the extended control
$\tilde u:[0,1]\to\R^{k+1}$ defined in
\eqref{eq:aux_lin_ctrl} and \eqref{eq:aux_ext_ctrl}
respectively, we can equivalently rewrite 
the affine-control system \eqref{eq:ens_ctrl_sys}
corresponding to $\theta$ as follows:
\begin{equation} \label{eq:lin_ctrl_rewr}
\begin{cases}
\dot x^\theta 
= \tilde F^\theta(x^\theta) \tilde u,\\
\xth(0) = \xth_0,
\end{cases}
\end{equation}
for every $u\in \U$. In other words, any 
solution $x_u^\theta:[0,1]\to\R^n$
of \eqref{eq:ens_ctrl_sys} 
corresponding to
the admissible control $u\in\U$ is in turn
a solution of the linear-control system
\eqref{eq:lin_ctrl_rewr} corresponding
to the extended control $\tilde u
\in \tilde \U$. %L^2([0,1],\R^{k+1})$
On the other hand, 
the convergence $u_m\weak_{L^2}u_\infty$
as $m\to\infty$
implies the convergence of the respective extended
controls, i.e., $\tilde u_m\weak_{L^2}\tilde 
u_\infty$ as $m\to\infty$.
Therefore, $(x_m^\theta)_{m\in\NN}$ is the 
sequence of solutions of the linear-control system
\eqref{eq:lin_ctrl_rewr} 
corresponding to the $L^2$-weakly convergent 
sequence of controls $(\tilde u_m)_{m\in\NN}$.
Moreover, $x^\theta_\infty$ is the solution of
\eqref{eq:lin_ctrl_rewr} associated with the
weak-limiting control $\tilde u_\infty$.
Using \cite[Lemma~7.1]{S1}, we deduce 
 \eqref{eq:point_conv_X_m}.
\end{proof}

We are now in position to prove Proposition~\ref{prop:unif_conv_map_X}.

\begin{proof}[Proof of Proposition~\ref{prop:unif_conv_map_X}]
Let us consider a $L^2$-weakly convergent sequence 
$(u_m)_{m\in\NN} \subset \U$
such that $u_m\weak_{L^2}u_\infty$ as
$m\to\infty$. 
We immediately deduce that there exists
$R>0$ such that
$||u_m||_{L^2}^2 \leq R$ for every $m\in \NN\cup
\{  \infty\}$. 
%\begin{equation*}
%||u_m||_{L^2}\leq R, \quad \forall m\in \NN\cup
%\{ \infty \}.
%\end{equation*}
Thus, in virtue of 
Lemma~\ref{lem:unif_cont_X}, we 
deduce that the sequence of mappings
$\{ X_m:[0,1]\times \Theta \to\R^n \}_{m\in \NN}$
is uniformly equi-continuous, while
Lemma~\ref{lem:bound_X} guarantees that it
is uniformly equi-bounded. Therefore, 
applying the Ascoli-Arzel\`a Theorem (see, e.g.,
\cite[Theorem~4.25]{B11}), we deduce that
the family 
$( X_m )_{m\in \NN}$
is pre-compact with respect to the 
strong topology of the Banach space
$C^0([0,1]\times\Theta,\R^n)$.
Finally, Lemma~\ref{lem:point_conv_X_m}
implies that
\begin{equation*}
\lim_{m\to\infty} X_m(t,\theta) =
X_\infty (t,\theta) 
\end{equation*}
for every $(t,\theta)\in[0,1]\times\Theta$. 
In particular, we deduce that the 
set of limiting points of the pre-compact sequence 
$( X_m )_{m\in \NN}$ is reduced to
the single-element set
$\{ X_\infty \}$. 
This proves \eqref{eq:unif_conv_map_X}.
\end{proof}
 
\end{subsection}

\begin{subsection}{Adjoint variables of the controlled ensemble} \label{subsec:Lambda}
In this subsection we introduce a function $\Lambda_u$, which will play a crucial role in Section~\ref{sec:PMP_ens}. 
Here we consider an assigned function $a:\R^n\times \Theta\to\R$ such that $(x,\theta)\mapsto \nabla_x a(x,\theta)$ is continuous. Moreover, we further require that $(x,\theta)\mapsto \frac{\partial}{\partial x}F_i(x,\theta)$ is continuous for every $i=0,\ldots,k$.
For every $u\in \U$ and every $\theta\in \Theta$, we define the function $\lambda_u^\theta:[0,1]\to (\R^n)^*$ as the solution of the following differential equation
\begin{equation} \label{eq:def_lambda_theta_u}
\begin{cases}
\dot \lambda_u^\theta(t) = - \lambda_u^\theta(t)
\left( 
\frac{\partial F_0(x_u^\theta(t), \theta)}{\partial x}
+ \sum_{i=1}^ku_i(t) 
\frac{\partial F_i(x_u^\theta(t), \theta)}{\partial x} 
\right),\\
\lambda_u^\theta(1) = \nabla_x a(x_u^\theta(1),\theta),
\end{cases}
\end{equation}
where the curve $x_u^\theta:[0,1]\to\R^n$ is the solution of the Cauchy problem \eqref{eq:ens_ctrl_sys} corresponding to the system identified by $\theta$ and to the admissible control $u$. 
We insist on the fact that in this paper $\lambda_u^\theta$ is always understood as a row-vector, as well as any other element of $(\R^n)^*$. The existence and the uniqueness of the solution of \eqref{eq:def_lambda_theta_u} follow as a standard application of the Carath\'eodory Theorem (see, e.g., \cite[Theorem~5.3]{H80}).
Similarly as done in the previous subsection, for every $u\in\U$ we introduce the function $\Lambda_u:[0,1]\times \Theta\to (\R^n)^*$ defined as 
\begin{equation} \label{eq:def_Lambda}
\Lambda_u(t,\theta):= \lambda_u^\theta(t).
\end{equation}
In the case of a sequence of weakly convergent controls $(u_m)_{m\in\NN}$, for the corresponding sequence $(\Lambda_{u_m})_{m\in\NN}$ we can establish a result analogue to Proposition~\ref{prop:unif_conv_map_X}.
 
\begin{proposition} \label{prop:unif_conv_map_Lambda}
Let us assume that the mappings 
$(x,\theta)\mapsto 
\frac{\partial}{\partial x}F_i(x,\theta)$ 
are continuous for every $i=0,\ldots,k$, as well as
the gradient $(x,\theta)\mapsto \nabla_x a(x,\theta)$.
Let us consider a sequence of admissible controls
$(u_m)_{m\in\NN} \subset \U$
such that $u_m\weak_{L^2}u_\infty$ as
$m\to\infty$.
For every $m\in \NN \cup \{ \infty\}$,
let $\Lambda_m:[0,1]\times \Theta\to (\R^n)^*$
be the application defined in
\eqref{eq:def_Lambda} that collects the
adjoint variables \eqref{eq:def_lambda_theta_u}
corresponding to the admissible control $u_m$.
Then, we have that
\begin{equation} \label{eq:unif_conv_map_Lambda}
\lim_{m\to\infty}\, \sup_{(t,\theta)\in [0,1]\times\Theta} |\Lambda_m(t,\theta) - \Lambda_\infty(t,\theta)|_2
=0. 
\end{equation}
\end{proposition}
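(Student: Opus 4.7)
The plan is to mirror the three-step strategy used to prove Proposition~\ref{prop:unif_conv_map_X}: one establishes (i) uniform equi-boundedness of the family $\{\Lambda_m\}_{m\in\NN}$, (ii) uniform equi-continuity on $[0,1]\times\Theta$, and (iii) pointwise convergence $\Lambda_m(t,\theta)\to \Lambda_\infty(t,\theta)$ at every $(t,\theta)$. Ascoli-Arzel\`a then yields pre-compactness in $C^0([0,1]\times\Theta,(\R^n)^*)$, while (iii) identifies the unique cluster point with $\Lambda_\infty$, giving \eqref{eq:unif_conv_map_Lambda}. A crucial simplification compared to Proposition~\ref{prop:unif_conv_map_X} is that the latter is already at our disposal: together with Lemma~\ref{lem:bound_X}, it guarantees that $x_m^\theta\to x_\infty^\theta$ uniformly on $[0,1]\times\Theta$ and that the trajectories remain in a fixed compact set $K\subset\R^n$. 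By continuity of $\partial_x F_i$ and $\nabla_x a$, this implies uniform convergence of the matrix-valued coefficients
\[
A_i^m(t,\theta):=\frac{\partial F_i(x_m^\theta(t),\theta)}{\partial x},\quad i=0,\ldots,k,
\]
and of the terminal values $\nabla_x a(x_m^\theta(1),\theta)$, to their $\infty$-counterparts as $m\to\infty$.

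Steps (i) and (ii) are essentially computational. Equi-boundedness follows from a backward Gronwall inequality applied to the integral form of \eqref{eq:def_lambda_theta_u}: the terminal value is bounded by continuity of $\nabla_x a$ on $K\times\Theta$, and the integral of the coefficient is controlled via the uniform estimate $\|u_m\|_{L^1}\leq \sqrt k\,\|u_m\|_{L^2}$ from \eqref{eq:norm_ineq} together with the uniform bound on the $A_i^m$. Equi-continuity in $t$ follows by the same H\"older-$\tfrac12$ argument that underlies Proposition~\ref{prop:sob_Holder}: one checks that $\|\dot\lambda_m^\theta\|_{L^2}$ is uniformly bounded in $m,\theta$, and Cauchy-Schwarz then yields $|\lambda_m^\theta(t_1)-\lambda_m^\theta(t_2)|_2\leq C|t_1-t_2|^{1/2}$ uniformly. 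Equi-continuity in $\theta$ is more delicate but is obtained by a Gronwall comparison between $\lambda_m^{\theta_1}$ and $\lambda_m^{\theta_2}$, exploiting the uniform-in-$m$ continuity of $\theta\mapsto x_m^\theta$ inherited from the equi-continuity of $X_m$ used in Proposition~\ref{prop:unif_conv_map_X}, together with the joint continuity of $\partial_x F_i$ and $\nabla_x a$ on $K\times\Theta$.

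The main obstacle is step (iii). Thanks to (i)-(ii), for each fixed $\theta$ the sequence $(\lambda_m^\theta)_{m\in\NN}$ is already pre-compact in $C^0([0,1],(\R^n)^*)$, so up to a subsequence $\lambda_m^\theta\to\bar\lambda^\theta$ uniformly on $[0,1]$. The task is to identify $\bar\lambda^\theta$ with $\lambda_\infty^\theta$, which I would accomplish by passing to the limit in the integral form
\[
\lambda_m^\theta(t)=\nabla_x a(x_m^\theta(1),\theta)+\int_t^1 \lambda_m^\theta(s)\Bigl(A_0^m(s,\theta)+\sum_{i=1}^k u_{m,i}(s) A_i^m(s,\theta)\Bigr)ds.
\]
All terms not involving $u_m$ pass to the limit by dominated convergence. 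The delicate contributions are the ones of the form $\int_t^1 \lambda_m^\theta(s)\, u_{m,i}(s)\, A_i^m(s,\theta)\,ds$, where $u_m$ is only weakly convergent and is multiplied by an $m$-dependent coefficient. After adding and subtracting $\bar\lambda^\theta A_i^\infty$, the uniform convergences $\lambda_m^\theta\to \bar\lambda^\theta$ and $A_i^m\to A_i^\infty$ reduce this to the single limit
\[
\int_t^1 u_{m,i}(s)\,\bigl[\bar\lambda^\theta(s) A_i^\infty(s,\theta)\bigr]\,ds \; \to \; \int_t^1 u_{\infty,i}(s)\,\bigl[\bar\lambda^\theta(s) A_i^\infty(s,\theta)\bigr]\,ds,
\]
which is precisely the weak convergence $u_m\weak_{L^2} u_\infty$ tested against the fixed $L^2$ function $\bar\lambda^\theta A_i^\infty$. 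Hence $\bar\lambda^\theta$ satisfies \eqref{eq:def_lambda_theta_u} with parameter $\infty$, and uniqueness for this linear Cauchy problem (via Carath\'eodory) forces $\bar\lambda^\theta=\lambda_\infty^\theta$. Since every convergent subsequence has the same limit, the whole sequence converges pointwise, and combined with (i)-(ii) this closes the argument exactly as in Proposition~\ref{prop:unif_conv_map_X}.
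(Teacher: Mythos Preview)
Your proposal is correct and follows the same Ascoli--Arzel\`a strategy as the paper: equi-boundedness (the paper's Lemma~\ref{lem:bound_Lambda}), equi-continuity (Lemma~\ref{lem:unif_cont_Lambda}), and pointwise identification of the limit (Lemma~\ref{lem:point_conv_Lambda}). The only cosmetic difference is in step~(iii): the paper extracts an $H^1$-weakly convergent subsequence of $(\lambda_m^\theta)_m$ and passes to the limit in the differential form of \eqref{eq:def_lambda_theta_u}, while you pass to the limit directly in the integral form using the $C^0$-compactness already furnished by (i)--(ii); both routes reduce to the same key splitting of the bilinear term $u_{m,i}\,\lambda_m^\theta A_i^m$ into a strongly vanishing part and a weak-limit part.
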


Before detailing the proof of Proposition~\ref{prop:unif_conv_map_Lambda}, we establish an auxiliary result with a similar flavor as Lemma~\ref{lem:point_conv_X_m}.

\begin{lemma} \label{lem:point_conv_Lambda}
Let us assume that the mappings 
$(x,\theta)\mapsto 
\frac{\partial}{\partial x}F_i(x,\theta)$ 
are continuous for every $i=0,\ldots,k$, as well as
the gradient $(x,\theta)\mapsto \nabla_x a(x,\theta)$.
Let us consider a sequence of admissible controls
$(u_m)_{m\in\NN} \subset \U$
such that $u_m\weak_{L^2}u_\infty$ as
$m\to\infty$.
For every $m\in \NN \cup \{ \infty\}$
and for every $\theta \in \Theta$, let
$\lambda_m^\theta:[0,1]\to(\R^n)^*$ be the solution
of \eqref{eq:def_lambda_theta_u} corresponding to the
ensemble parameter $\theta$ and to the admissible
control $u_m$.
Then, for every $t\in [0,1]$ and for every
$\theta \in \Theta$, we have
\begin{equation}\label{eq:point_conv_Lmd}
\lim_{m\to\infty}||
\lambda_m^\theta(t)-\lambda_\infty^\theta(t)||_{C^0}=0.
\end{equation}
\end{lemma}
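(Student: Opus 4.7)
My plan is to mirror the strategy used for Lemma~\ref{lem:point_conv_X_m}, but to work directly with the integral form of the adjoint equation \eqref{eq:def_lambda_theta_u}, since its coefficients depend on the varying trajectories $x_m^\theta$ and a clean reduction to a linear-in-control system with $m$-independent coefficients is not immediately available. Fix $\theta\in\Theta$. First, I would invoke Lemma~\ref{lem:point_conv_X_m} to get $x_m^\theta\to x_\infty^\theta$ uniformly on $[0,1]$, so that the trajectories lie in a common compact $K\subset\R^n$. Combined with the assumed continuity of $\nabla_x a$ and of $(x,\theta)\mapsto \frac{\partial F_i(x,\theta)}{\partial x}$, this would upgrade to $\nabla_x a(x_m^\theta(1),\theta)\to\nabla_x a(x_\infty^\theta(1),\theta)$ and, for every $i=0,\ldots,k$,
\[
\sup_{t\in[0,1]}\left|\frac{\partial F_i(x_m^\theta(t),\theta)}{\partial x} - \frac{\partial F_i(x_\infty^\theta(t),\theta)}{\partial x}\right|\longrightarrow 0.
\]

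Next, writing \eqref{eq:def_lambda_theta_u} in integral form and setting $\Delta_m(t):=\lambda_m^\theta(t)-\lambda_\infty^\theta(t)$ together with $A_m(s):=\frac{\partial F_0(x_m^\theta(s),\theta)}{\partial x}+\sum_{i=1}^k u_{m,i}(s)\frac{\partial F_i(x_m^\theta(s),\theta)}{\partial x}$, I would obtain
\[
\Delta_m(t) = \Delta_m(1) + \int_t^1 \Delta_m(s)\, A_m(s)\, ds + \int_t^1 \lambda_\infty^\theta(s)\bigl[A_m(s)-A_\infty(s)\bigr]\, ds.
\]
The task reduces to showing that the last (forcing) integral tends to $0$ uniformly in $t$. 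With the convention $u_{m,0}\equiv 1$, I would decompose
\[
A_m(s)-A_\infty(s) = \sum_{i=0}^k u_{m,i}(s)\left[\frac{\partial F_i(x_m^\theta(s),\theta)}{\partial x}-\frac{\partial F_i(x_\infty^\theta(s),\theta)}{\partial x}\right] + \sum_{i=1}^k (u_{m,i}(s)-u_{\infty,i}(s))\,\frac{\partial F_i(x_\infty^\theta(s),\theta)}{\partial x}.
\]
The first sum, paired with the bounded factor $\lambda_\infty^\theta(s)$, contributes at most $C\,\|u_m\|_{L^1}\,\max_i\bigl\|\tfrac{\partial F_i(x_m^\theta,\theta)}{\partial x}-\tfrac{\partial F_i(x_\infty^\theta,\theta)}{\partial x}\bigr\|_{C^0}$ (uniformly in $t$), which vanishes thanks to the uniform $L^2$-, hence (by \eqref{eq:norm_ineq}) $L^1$-, bound on $(u_m)_{m\in\NN}$.

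The main obstacle I anticipate is the contribution of the second sum, because $u_m-u_\infty$ converges only weakly. For every $i=1,\ldots,k$ I would introduce $\phi_i(s):=\lambda_\infty^\theta(s)\,\frac{\partial F_i(x_\infty^\theta(s),\theta)}{\partial x}$, a continuous (hence $L^2$) function on $[0,1]$, and the scalars $g_m^i(t):=\int_t^1 \phi_i(s)(u_{m,i}(s)-u_{\infty,i}(s))\,ds$. Pointwise convergence $g_m^i(t)\to 0$ would follow by testing the weak convergence $u_m\weak_{L^2}u_\infty$ against $\phi_i\,\mathbf{1}_{[t,1]}\in L^2$, while the Cauchy-Schwarz inequality gives
\[
|g_m^i(t)-g_m^i(t')| \leq \|\phi_i\|_{L^2}\,\|u_{m,i}-u_{\infty,i}\|_{L^2}\,|t-t'|^{1/2},
\]
so the family $(g_m^i)_{m\in\NN}$ is uniformly bounded and equi-H\"older-continuous of exponent $\tfrac12$. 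An application of the Ascoli-Arzel\`a Theorem would then upgrade pointwise convergence to $\|g_m^i\|_{C^0}\to 0$; this step is exactly where weak convergence is converted into a uniform-in-$t$ estimate, and constitutes the crux of the argument. Finally, since the Jacobians are bounded on $K$ and $\|A_m\|_{L^1}\leq C(1+\|u_m\|_{L^2})$ is uniformly bounded in $m$, a backward Gronwall inequality applied to the displayed identity for $\Delta_m$ yields $\|\Delta_m\|_{C^0}\to 0$, which is precisely \eqref{eq:point_conv_Lmd}.
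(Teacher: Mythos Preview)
Your argument is correct and complete, but it follows a genuinely different route from the paper's proof. The paper argues by compactness: it first observes that the sequence $(\lambda_m^\theta)_{m\in\NN}$ is bounded in $H^1([0,1],(\R^n)^*)$, extracts an $H^1$-weakly convergent subsequence, and uses the compact embedding $H^1\hookrightarrow C^0$ to obtain a $C^0$-limit $\bar\lambda^\theta$. It then passes to the limit in the differential equation \eqref{eq:def_lambda_theta_u} by pairing the weak convergence $\dot\lambda_{m_\ell}^\theta\weak_{L^2}\dot{\bar\lambda}^\theta$ and $u_{m_\ell}\weak_{L^2}u_\infty$ with the strong convergences $\lambda_{m_\ell}^\theta\to_{C^0}\bar\lambda^\theta$ and $\tfrac{\partial F_i}{\partial x}(x_{m_\ell}^\theta,\theta)\to_{C^0}\tfrac{\partial F_i}{\partial x}(x_\infty^\theta,\theta)$, and concludes by uniqueness that $\bar\lambda^\theta=\lambda_\infty^\theta$. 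Your approach, by contrast, is a direct stability estimate: you subtract the two integral equations, isolate a forcing term, and close via a backward Gr\"onwall inequality. The delicate point---upgrading the weak convergence of $u_m$ to uniform-in-$t$ smallness of $\int_t^1\phi_i(s)(u_{m,i}(s)-u_{\infty,i}(s))\,ds$---you handle by an Ascoli--Arzel\`a argument on these primitives, which is the exact analogue of what the paper achieves through the compact embedding $H^1\hookrightarrow C^0$. Your route is slightly more hands-on and yields a quantitative error decomposition, while the paper's compactness-plus-identification scheme is more structural and avoids the explicit Gr\"onwall step. One minor correction: in your H\"older estimate for $g_m^i$, the constant should involve $\|\phi_i\|_{L^\infty}$ rather than $\|\phi_i\|_{L^2}$ (apply Cauchy--Schwarz against the indicator of $[t,t']$ and use that $\phi_i$ is continuous, hence bounded); this does not affect the conclusion.
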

\begin{proof}
The weak convergence $u_m\weak_{L^2}u_\infty$ as
$m\to\infty$ implies that there exists $R>0$
such that $||u_m||_{L^2}\leq R$ for every
$m\in \NN \cup \{ \infty  \}$.
Let us fix $\theta \in \Theta$.
With the same argument as in the proof of 
Lemma~\ref{lem:unif_holder_time_Lambda} we deduce
that the sequence 
$(\lambda_m^\theta)_{m\in \NN}\subset H^1([0,1],\R^k)$
is equi-bounded. Therefore, there exists a 
weakly convergent subsequence 
$(\lambda_{m_\ell}^\theta)_{\ell\in \NN}$ such that
$\lambda_{m_\ell}^\theta \weak_{H^1}\bar \lambda^\theta$ as $\ell\to\infty$. Moreover, this implies that
$\dot \lambda_{m_\ell}^\theta \weak_{L^2}\dot{\bar \lambda}^\theta$ as $\ell\to\infty$, while from the
compact inclusion $H^1 \hookrightarrow C^0$
we deduce that $\lambda_{m_\ell}^\theta \to_{C^0}\bar \lambda^\theta$ as $\ell\to\infty$.
In particular, this last convergence
and Lemma~\ref{lem:point_conv_X_m} imply that
\begin{equation} \label{eq:final_datum_lem}
\bar \lambda^\theta(1) = \lim_{\ell \to \infty}
\lambda_{m_\ell}^\theta(1) = 
\lim_{\ell \to \infty} \nabla_x a(x^\theta_{m_\ell}(1),\theta) = \nabla_x a(x^\theta_\infty(1),\theta),
\end{equation}
where for every $m\in \NN\cup\{ \infty \}$ the curve
$x^\theta_m:[0,1]\to\R^n$ denotes the solution
of \eqref{eq:ens_ctrl_sys} corresponding to the
control $u_m$ and to the parameter $\theta$.
We want to prove that $\bar \lambda^\theta:[0,1]\to
(\R^n)^*$ is the
solution of \eqref{eq:def_lambda_theta_u}
corresponding to the control $u_\infty$.
We recall that 
\begin{equation} \label{eq:diff_eq_lambda_m}
\dot \lambda_{m_\ell}^\theta =
\lambda_{m_\ell}^\theta
 \left( 
\frac{\partial F_0(x_{m_\ell}^\theta(\cdot), \theta)}{\partial x}
+ \sum_{i=1}^ku_{i, {m_\ell}} 
\frac{\partial F_i(x_{m_\ell}^\theta(\cdot), \theta)}{\partial x} 
\right)
\end{equation}
for every $\ell\in \NN$.
We observe that, in virtue of Lemma~\ref{lem:bound_X},
there exists $K_R\subset \R^n$ such that
$x_m^\theta(t)\in K_R$ for every $m\in \NN\cup\{\infty\}$ and for every $(t,\theta)\in [0,1]\times \Theta$.
Then, owing to the continuity of 
the mappings 
$(x,\theta)\mapsto 
\frac{\partial}{\partial x}F_i(x,\theta)$ 
for every $i=0,\ldots,k$, we deduce 
the convergence 
$\frac{\partial}{\partial x}F_i(x_{m_\ell}(\cdot),\theta)\to_{C^0} 
\frac{\partial}{\partial x}F_i(x_{\infty}(\cdot),\theta)$ as $\ell\to\infty$ for every $i=0,\ldots,k$.
Summarizing, we have that
\begin{equation} \label{eq:resume_conv}
\begin{cases}
\dot \lambda_{m_\ell}^\theta \weak_{L^2}\dot{\bar \lambda}^\theta\\
\lambda_{m_\ell}^\theta \to_{C^0}\bar \lambda^\theta\\
u_{i, m_\ell} \weak_{L^2} u_{i,\infty} & \mbox{for every } i=1,\ldots,k,\\ 
\frac{\partial}{\partial x}F_i(x_{m_\ell}(\cdot),\theta)\to_{C^0} 
\frac{\partial}{\partial x}F_i(x_{\infty}(\cdot),\theta) & \mbox{for every } i=0,\ldots,k,
\end{cases} \quad
\mbox{ as } \ell\to\infty.
\end{equation}
Combining \eqref{eq:diff_eq_lambda_m} and
\eqref{eq:resume_conv}, we derive that 
\begin{equation}\label{eq:diff_eq_lambda_inf}
\dot {\bar \lambda}^\theta =
\bar \lambda^\theta
 \left( 
\frac{\partial F_0(x_{\infty}^\theta(\cdot), \theta)}{\partial x}
+ \sum_{i=1}^ku_{i, \infty} 
\frac{\partial F_i(x_{\infty}^\theta(\cdot), \theta)}{\partial x} 
\right).
\end{equation}
The identities \eqref{eq:final_datum_lem} and
\eqref{eq:diff_eq_lambda_inf} show that
$\bar \lambda^\theta
\equiv \lambda_\infty^\theta$, where  
$\lambda_\infty^\theta:[0,1]\to
(\R^n)^*$ is the unique
solution of \eqref{eq:def_lambda_theta_u}
corresponding to the control $u_\infty$.
Hence, since
\textit{any} $H^1$-weakly convergent subsequence
of $(\lambda_m^\theta)_{m\in\NN}$
must converge to $\lambda_\infty^\theta$,
we get \eqref{eq:point_conv_Lmd}.
Since this argument holds for every choice of $\theta\in \Theta$, we deduce the thesis.
\end{proof}

We are now able to prove 
Proposition~\ref{prop:unif_conv_map_Lambda}.

\begin{proof}[Proof of Proposition~\ref{prop:unif_conv_map_Lambda}]
The argument is the same as in the proof of
Proposition~\ref{prop:unif_conv_map_X}.
Namely, Lemma~\ref{lem:point_conv_Lambda} guarantees
the pointwise convergence of the mappings
$(\Lambda_m)_{m\in\NN}$ to $\Lambda_\infty$,
while Lemma~\ref{lem:bound_Lambda}
and Lemma~\ref{lem:unif_cont_Lambda} ensure, respectively,
that
the elements of the sequence are uniformly equi-bounded
and uniformly equi-continuous. 
\end{proof}

\end{subsection}

\begin{subsection}{Gradient field for
affine-control systems with
end-point cost} \label{subsec:aff_grad_field}
In this subsection we generalize to the case of affine-control systems some of the results obtained in \cite{S1} in the framework of linear-control systems with end-point cost. As we shall see, the strategy that we pursue consists in embedding the affine-control system into a larger linear-control system, similarly as done in the proof of Lemma~\ref{lem:point_conv_X_m}. Therefore, we can exploit a consistent part of the machinery developed in \cite{S1} to cover the present case.
Let us consider a {\it single} affine-control system on $\R^n$ of the form 
\begin{equation} \label{eq:aff_sys}
\begin{cases}
\dot x(t) = F_0(x(t)) + F(x(t))u(t), &
\mbox{for a.e. }t\in[0,1],\\ 
x(0) = x_0,
\end{cases}
\end{equation}
where $F_0:\R^n\to\R^n$ and $F:\R^n\to\R^{n\times k}$ are $C^2$-regular applications that design the affine-control system, and $u\in \U=L^2([0,1],\R^k)$ is the control. 
We introduce the functional $\J:\U\to\R$ defined on the space of admissible controls as follows:
\begin{equation} \label{eq:def_J}
\J(u):= a(x_u(1)) + \frac{\beta}{2}||u||_{L^2}^2
\end{equation}
for every $u\in\U$, where $a:\R^n\to\R$ is a $C^2$-regular function and $\beta>0$ a positive parameter. 
After proving that the functional $\J$ is differentiable, we provide the Riesz's representation of the differential $d_u\J:\U\to\R$. 

Before proceeding, it is convenient to introduce the linear-control system in which we embed \eqref{eq:aff_sys}. Similar to \eqref{eq:aux_lin_ctrl}, let $\tilde F:\R^n\to\R^{n\times(k+1)}$ be the function defined as 
\begin{equation} \label{eq:ext_lin_matrix}
\tilde F(x) := (F_0(x),F(x))
\end{equation}
for every $x\in\R^n$.
If we define the extended space of admissible controls as $\tilde\U:=L^2([0,1],\R^{k+1})$, we may consider the following linear-control system
\begin{equation} \label{eq:lin_ctrl_ext}
\begin{cases}
\dot x(t) = \tilde F(x(t)) \tilde u(t)&
\mbox{for a.e. } t\in[0,1],\\
x(0)=x_0,
\end{cases}
\end{equation}
where $\tilde u\in \tilde \U$. 
We observe that we can recover the affine system \eqref{eq:aff_sys} by restricting the set of admissible controls in \eqref{eq:lin_ctrl_ext} to the image of the affine embedding $i:\U\to \tilde \U$ defined as 
\begin{equation} \label{eq:def_embed_ctrls}
i[u] := \left(\begin{matrix}
1 \\
u
\end{matrix}\right).
\end{equation}
We introduce the extended cost functional $\tilde\J:\tilde \U\to\R$ as 
\begin{equation}\label{eq:def_tilde_J}
\tilde \J(\tilde u) := a(x_{\tilde u}(1)) +
\frac\beta2 ||\tilde u||_{L^2}^2
\end{equation}
for every $\tilde u\in\tilde \U$,
where $x_{\tilde u}:[0,1]\to\R^n$ is the absolutely continuous solution of \eqref{eq:lin_ctrl_ext} corresponding to the control $\tilde u$.
To avoid confusion, in the present subsection we denote by $\langle\cdot,\cdot\rangle_{\U}$ and $\langle\cdot,\cdot\rangle_{\tilde \U}$ the scalar products in $\U$ and $\tilde \U$, respectively.
In the next result we prove that the functional $\J:\U\to\R$ defined in \eqref{eq:def_J} is differentiable.

\begin{proposition} \label{prop:G_diff_cost}
Let us assume that $F_0:\R^n\to\R^n$ 
and $F:\R^n\to\R^{n\times k}$ are 
$C^1$-regular, as well as the 
function $a:\R^n\to\R$ designing the end-point
cost.
Then, the functionals $\J:\U\to\R$ 
and $\tilde \J:\tilde \U\to\R$ 
defined, respectively, in \eqref{eq:def_J}
and in \eqref{eq:def_tilde_J} are
Gateaux differentiable at every 
point of their respective domains.
\end{proposition}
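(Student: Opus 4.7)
The plan is to reduce both assertions to the corresponding statement for linear-control systems established in \cite{S1}. The bridge is the affine embedding $i:\U\to\tilde\U$ defined in \eqref{eq:def_embed_ctrls}: a direct comparison of \eqref{eq:aff_sys} and \eqref{eq:lin_ctrl_ext} shows that $x_u(t)=x_{i[u]}(t)$ for every $t\in[0,1]$, hence
$$
\J(u) \;=\; \tilde\J(i[u]) \;-\; \frac{\beta}{2}
$$
for every $u\in\U$. So if $\tilde\J$ turns out to be Gateaux differentiable, then $\J$ will inherit the property via the chain rule applied to this affine change of variables.

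First I would handle $\tilde\J$. The matrix-valued map $\tilde F$ defined in \eqref{eq:ext_lin_matrix} inherits the $C^1$-regularity of $F_0$ and $F$, so the system \eqref{eq:lin_ctrl_ext} falls into the class of linear-control systems studied in \cite{S1}. The results therein yield Gateaux differentiability of the end-point map $\tilde u\mapsto x_{\tilde u}(1)$: the derivative in direction $\tilde v\in\tilde\U$ is $\xi_{\tilde u,\tilde v}(1)$, where $\xi_{\tilde u,\tilde v}$ solves the linearised (variation) equation with initial datum $\xi(0)=0$ and a forcing term depending linearly on $\tilde v$. Composing with the $C^1$ function $a$ (standard chain rule in $\R^n$) and adding the Fr\'echet-differentiable quadratic regularisation $\frac\beta2\|\cdot\|_{L^2}^2$ gives the Gateaux differentiability of $\tilde\J$ at every point of $\tilde\U$, together with the explicit expression
$$
d_{\tilde u}\tilde\J(\tilde v) \;=\; \langle \nabla a(x_{\tilde u}(1)),\,\xi_{\tilde u,\tilde v}(1)\rangle_{\R^n} + \beta\langle \tilde u,\tilde v\rangle_{\tilde\U}.
$$

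Next I would transfer the differentiability to $\J$. Since $i$ is affine with linear part $v\mapsto(0,v)^T$, for every $u,v\in\U$ and every $\varepsilon\in\R$ one has $i[u+\varepsilon v]=i[u]+\varepsilon(0,v)^T$, so
$$
\lim_{\varepsilon\to 0}\frac{\J(u+\varepsilon v)-\J(u)}{\varepsilon} \;=\; \lim_{\varepsilon\to 0}\frac{\tilde\J(i[u]+\varepsilon(0,v)^T)-\tilde\J(i[u])}{\varepsilon} \;=\; d_{i[u]}\tilde\J\bigl((0,v)^T\bigr),
$$
which exists by the preceding step. Hence $\J$ is Gateaux differentiable at every $u\in\U$, its differential being simply the restriction of $d_{i[u]}\tilde\J$ to the subspace of increments of the form $(0,v)^T$ with $v\in\U$.

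The principal technical burden is concentrated in the Gateaux differentiability of the end-point map $\tilde u\mapsto x_{\tilde u}(1)$ imported from \cite{S1}: one has to show that the remainder $x_{\tilde u+\varepsilon\tilde v}(1)-x_{\tilde u}(1)-\varepsilon\xi_{\tilde u,\tilde v}(1)$ is $o(\varepsilon)$ as $\varepsilon\to 0$, which rests on Gronwall-type estimates controlled by the continuity of $\partial_x\tilde F$ on the compact set where the perturbed trajectories live (a set whose existence follows, via sub-linear growth as in \eqref{eq:sub_lin_F_0}--\eqref{eq:sub_lin_F}, from a uniform a priori bound). Everything downstream---the chain rule through $a$, the differentiability of the $L^2$ norm, and the affine change of variables---is bookkeeping.
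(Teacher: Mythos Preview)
Your proof is correct and follows essentially the same route as the paper: both establish the identity $\J(u)=\tilde\J(i[u])-\tfrac{\beta}{2}$, reduce to the Gateaux differentiability of $\tilde\J$, and obtain the latter by citing \cite{S1} for the end-point term together with the smoothness of the $L^2$-norm. Your version is simply more explicit about the chain-rule step through the affine embedding and about what the cited result from \cite{S1} actually entails.
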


\begin{proof}
We observe that the functional $\J:\U\to\R$ 
satisfies the following identity: 
\begin{equation}\label{eq:ident_ext_fun}
\J(u) = \tilde \J ( i(u)) -\frac\be2
\end{equation}
for every $u\in\U$, where $i:\U\to\tilde \U$ is the affine embedding reported in \eqref{eq:def_embed_ctrls}.
Since $i:\U\to\tilde \U$ is analytic, the proof reduces to showing that the functional $\tilde\J:\tilde\U\to\R$ is Gateaux differentiable. 
This is actually the case, since $\tilde u\mapsto \frac\be2 ||\tilde u||_{L^2}$ is smooth, while the first term at the right-hand side of \eqref{eq:def_tilde_J} (i.e., the end-point cost)
is Gateaux differentiable owing to \cite[Lemma~3.1]{S1}.
\end{proof}

By differentiation of the identity \eqref{eq:ident_ext_fun}, we deduce that
\begin{equation}\label{eq:ident_diff_ext_fun}
d_u\J(v) = d_{i[u]}\tilde\J \big( i_\#[v] \big)
\end{equation}
for every $u,v\in\U$, where we have introduced the linear inclusion $i_\#:\U\to\tilde\U$ defined as  
\begin{equation} \label{eq:def_i_sharp}
i_\#[v]:= 
\left(\begin{matrix}
0 \\
v
\end{matrix}\right)
\end{equation}
for every $v\in\U$.
In virtue of Proposition~\ref{prop:G_diff_cost}, we can consider the vector field $\G:\U\to\U$ that represents the differential of the functional $\J:\U\to\R$. 
Namely, for every $u\in \U$, let $\G[u]$ be the unique element of $\U$ such that
\begin{equation}\label{eq:def_G}
\langle\G[u],v\rangle_{\U} = d_u\J(v)
\end{equation} 
for every $v\in\U$.
Similarly, let us denote by $\tilde \G:\tilde\U \to \tilde \U$  the vector field such that
\begin{equation}\label{eq:def_tilde_G}
\langle\tilde \G[\tilde u],\tilde v\rangle_{\tilde 
\U} = d_{\tilde u} \tilde \J(\tilde v)
\end{equation}
for every $\tilde u,\tilde v\in \tilde\U$.
In \cite{S1} it was derived the expression of the vector field $\tilde\G$ associated with the linear-control system \eqref{eq:lin_ctrl_ext} and to the cost \eqref{eq:def_tilde_J}.
In the next result we use it in order to obtain the expression of $\G$. We use the notation $F(x)^T$ to denote the matrix in $\R^{k\times n}$ obtained by the transposition of the matrix $F(x)\in \R^{n\times k}$, for every $x\in \R^n$. The analogue convention holds for $\tilde F(x)^T$, for every $x\in\R^n$.  

\begin{theorem}\label{thm:grad_field_aff}
Let us assume that $F_0:\R^n\to\R^n$ 
and $F:\R^n\to\R^{n\times k}$ are 
$C^1$-regular, as well as the 
function $a:\R^n\to\R$ designing the end-point
cost. Let $\G:\U\to\U$ be the 
gradient vector field on $\U$
that satisfies \eqref{eq:def_G}.
Then, for every $u\in\U$ we have
\begin{equation}\label{eq:grad_field_aff}
\G[u](t) = F(x_u(t))^T\lambda_u^T(t) + \be u(t)
\end{equation}
for a.e. $t\in[0,1]$, where
$x_u:[0,1]\to\R^n$ is the solution of 
\eqref{eq:aff_sys} corresponding to the control
$u$, and $\lambda_u:[0,1]\to(\R^n)^*$ is the
absolutely continuous curve of covectors that
solves
\begin{equation}\label{eq:adj_eq_aff}
\begin{cases}
\dot \lambda_u(t) = -\lambda_u(t)
\left(\frac{\partial F_0(x_u(t))}{\partial x} + 
\sum_{i=1}^k u_i(t)
\frac{\partial F_i(x_u(t))}{\partial x}
\right) &\mbox{a.e. in } [0,1],\\
\lambda_u(1) = \nabla a(x_u(1)).
\end{cases}
\end{equation}
\end{theorem}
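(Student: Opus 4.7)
The plan is to reduce the claim to the linear-control gradient formula already established in \cite{S1} by exploiting the embedding $i:\U\to\tilde\U$ introduced in \eqref{eq:def_embed_ctrls}. The key identity is \eqref{eq:ident_diff_ext_fun}, namely $d_u\J(v)=d_{i[u]}\tilde\J(i_\#[v])$, which couples $\G$ and $\tilde\G$ via
\[
\langle \G[u],v\rangle_{\U}=d_u\J(v)=\langle \tilde\G[i[u]],i_\#[v]\rangle_{\tilde\U}
\]
for every $u,v\in\U$. Thus, once $\tilde\G[i[u]]$ is known, $\G[u]$ can be read off from its last $k$ components, because $i_\#[v]=(0,v)^T$ annihilates the zeroth entry inside the $\tilde\U$-scalar product.

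The first step is to invoke the linear-control counterpart of the theorem from \cite{S1}: for the system \eqref{eq:lin_ctrl_ext} with cost \eqref{eq:def_tilde_J}, the gradient takes the form
\[
\tilde\G[\tilde u](t)=\tilde F(x_{\tilde u}(t))^{T}\tilde\lambda_{\tilde u}^{T}(t)+\be\,\tilde u(t),
\]
where $\tilde\lambda_{\tilde u}$ solves the corresponding adjoint equation with terminal datum $\nabla a(x_{\tilde u}(1))$. The second step is to specialize this identity at $\tilde u=i[u]$. Because the trajectory of \eqref{eq:lin_ctrl_ext} driven by $i[u]$ coincides with the trajectory $x_u$ of \eqref{eq:aff_sys} driven by $u$, and because substituting $\tilde u_0\equiv 1$ and $\tilde u_i=u_i$ for $i=1,\ldots,k$ turns the linear adjoint equation into exactly \eqref{eq:adj_eq_aff}, one concludes by uniqueness that $\tilde\lambda_{i[u]}\equiv \lambda_u$.

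The third step is the bookkeeping on rows and columns. Recalling $\tilde F(x)=(F_0(x),F(x))$ from \eqref{eq:ext_lin_matrix}, the transposed matrix $\tilde F(x)^{T}\in\R^{(k+1)\times n}$ has $F_0(x)^{T}$ in its zeroth row and $F(x)^{T}$ in the block of rows indexed by $1,\ldots,k$. Consequently, writing the $\tilde\U$-scalar product against $i_\#[v]=(0,v)^T$ kills the zeroth component of $\tilde\G[i[u]](t)$, and one is left with
\[
\langle \tilde\G[i[u]],i_\#[v]\rangle_{\tilde\U}
=\int_{0}^{1}\bigl\langle F(x_u(t))^{T}\lambda_u^{T}(t)+\be\,u(t),\,v(t)\bigr\rangle_{\R^k}\,dt.
\]
By arbitrariness of $v\in\U$ and the Riesz representation condition \eqref{eq:def_G}, this yields \eqref{eq:grad_field_aff}.

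The only non-routine point is verifying that, under the embedding $i$, the adjoint variable of the extended linear problem coincides with the $\lambda_u$ of \eqref{eq:adj_eq_aff}; but this is immediate from the uniqueness of the Cauchy problem for the adjoint equation, once one observes that plugging $\tilde u=(1,u)^{T}$ into the linear adjoint reproduces the drift term $\frac{\partial F_0}{\partial x}$ as the coefficient of the constant component $\tilde u_0=1$. All other manipulations are essentially the indexing computation described above, so the substantive mathematical content has been imported from \cite{S1} through the device of the affine embedding $i$.
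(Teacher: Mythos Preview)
Your proof is correct and follows essentially the same route as the paper: both reduce to the linear-control gradient formula from \cite{S1} via the embedding $i$, then extract $\G[u]$ as the last $k$ components of $\tilde\G[i[u]]$ (the paper phrases this as $\G=\pi\circ\tilde\G\circ i$, which is the same thing). Your treatment is slightly more explicit about why the adjoint variables $\tilde\lambda_{i[u]}$ and $\lambda_u$ coincide, but the underlying argument is identical.
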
     

\begin{remark}\label{rmk:row_vec_1}
In this paper, we understand the elements of $(\R^n)^*$ as row-vectors. Therefore, for every $t\in[0,1]$, $\lambda_u(t)$ should be read as a row-vector. This should be considered to give meaning to \eqref{eq:adj_eq_aff}.
\end{remark}

\begin{proof}[Proof of Theorem~\ref{thm:grad_field_aff}]
In virtue of \eqref{eq:ident_diff_ext_fun},
from the definitions \eqref{eq:def_G} and
\eqref{eq:def_tilde_G} we deduce that
\begin{equation} \label{eq:ident_G_tilde_G_aux}
\langle\G[u],v\rangle_{\U} =
\langle\tilde \G[i[u]],i_\# [v]\rangle_{\tilde\U}
= \langle\pi \tilde \G[i[u]],v\rangle_{\U}
\end{equation}
for every $u,v\in\U$, where
$\tilde \G:\tilde \U\to \tilde \U$
is the gradient vector field corresponding to
the functional $\tilde \J:\tilde\U\to\R$, and
 $\pi:\tilde \U\to\U$ 
is the linear application
\begin{equation} \label{eq:def_pi}
\pi: 
\left(\begin{matrix}
\tilde v_0 \\
\vdots\\
\tilde v_k
\end{matrix}\right)
\mapsto
\left(\begin{matrix}
\tilde v_1 \\
\vdots\\
\tilde v_k
\end{matrix}\right)
\end{equation}
for every $\tilde v\in \tilde \U$.
Therefore, we can rewrite 
\eqref{eq:ident_G_tilde_G_aux} as
\begin{equation}\label{eq:ident_G_tilde_G}
\G = \pi \circ \tilde \G\circ i,
\end{equation} 
where $i$ and $\pi$ are defined, respectively, in
\eqref{eq:def_embed_ctrls} and in
\eqref{eq:def_pi}.
This implies that we can deduce the expression
of $\G$ from the one of $\tilde \G$. 
In particular, from \cite[Remark~8]{S1}
it follows that for every $\tilde u\in\tilde \U$
we have
\begin{equation}\label{eq:grad_field_lin}
\tilde \G[\tilde u](t) =
\tilde F(x_{\tilde u}(t))^T\lambda_{\tilde u}^T(t)
+ \beta \tilde u(t)
\end{equation}
for a.e. $t\in[0,1]$, where
$x_{\tilde u}:[0,1]\to\R^n$ is the solution of 
\eqref{eq:lin_ctrl_ext} corresponding to the control
$\tilde u$, and $\lambda_{\tilde u}:[0,1]\to(\R^n)^*$ is the
absolutely continuous curve of covectors that
solves
\begin{equation}\label{eq:adj_eq_lin}
\begin{cases}
\dot \lambda_{\tilde u}(t) = -\lambda_{\tilde u}(t)
\sum_{i=0}^k \left( \tilde u_i(t)
\frac{\partial \tilde F_i(x_{\tilde u}(t))}{\partial x}
\right) &\mbox{for a.e. }t\in[0,1],\\
\lambda_{\tilde u}(1) = \nabla a(x_{\tilde u}(1)).
\end{cases}
\end{equation}
We stress the fact that the summation index
in \eqref{eq:adj_eq_lin} starts from $0$.
Then, the thesis follows immediately from 
\eqref{eq:ident_G_tilde_G}-\eqref{eq:adj_eq_lin}.
\end{proof}

\begin{remark} \label{rmk:reg_grad}
The identity \eqref{eq:ident_G_tilde_G} implies that the gradient field $\G:\U\to\U$ is at least as regular as $\tilde\G:\tilde\U\to\tilde\U$. In particular, under the further assumption that $F_0:\R^n\to\R^n$, $F:\R^n\to\R^{n\times k}$ and $a:\R^n\to\R$ are $C^2$-regular, from \cite[Lemma~3.2]{S1} it follows that $\tilde\G:\tilde\U\to\tilde\U$ is Lipschitz-continuous on the bounded sets of $\tilde \U$. In particular, under the same regularity hypotheses, $\G:\U\to\U$ is Lipschitz-continuous on the bounded sets of $\U$.  
\end{remark}
\end{subsection}

\end{section}

\begin{section}{Optimal control of ensembles} \label{sec:opt_ctrl_ens}
In this section we formulate a minimization problem for the ensemble of affine-control systems \eqref{eq:ens_ctrl_sys}. Namely, let us consider a non-negative continuous mapping $a:[0,1]\times\R^n\times\Theta\to\R_+$, a positive real number $\beta>0$ and a Borel probability measure $\nu$ on the time interval $[0,1]$. Therefore, for every $\theta\in\Theta$ we can study the following optimal control problem: 
\begin{equation} \label{eq:theta_fixed_opt_ctrl}
\int_0^1 a(t,x_u^\theta(t),\theta)\,d\nu(t)+
\frac\beta2 ||u||_{L^2}^2\to\min,
\end{equation}
where the curve $x_u^\theta:[0,1]\to\R^n$ is the solution of \eqref{eq:ens_ctrl_sys} corresponding to the parameter $\theta\in\Theta$ and to the admissible control $u\in\U$.
We recall that the ensemble of control systems \eqref{eq:ens_ctrl_sys} is aimed at modeling our partial knowledge of the data of the controlled dynamical system. Therefore, it is natural to assume that the space of parameters $\Theta$ is endowed with a Borel probability measure $\mu$ that quantifies this uncertainty. 
In view of this fact, we can formulate an optimal control problem for the ensemble of control systems \eqref{eq:ens_ctrl_sys} as follows:
\begin{equation} \label{eq:opt_ctrl_ens}
\int_\Theta 
\int_0^1a(t,x_u^\theta(t),\theta)\,d\nu(t)
\,d\mu(\theta)+
\frac\beta2 ||u||_{L^2}^2\to\min.
\end{equation}
The minimization problem \eqref{eq:opt_ctrl_ens} is obtained by averaging out the parameters $\theta \in \Theta$ in the optimal control problem \eqref{eq:theta_fixed_opt_ctrl} through the probability measure $\mu$.

In this section we study the variational problem \eqref{eq:opt_ctrl_ens}, and we prove that it admits a solution. 
Before proceeding, we introduce the functional $\F:\U\to\R_+$ associated with the minimization problem \eqref{eq:opt_ctrl_ens}. For every admissible control $u\in\U$, we set 
\begin{equation} \label{eq:def_fun_ens}
\F(u):= \int_\Theta 
\int_0^1a(t,x_u^\theta(t),\theta)\,d\nu(t)
\,d\mu(\theta)+
\frac\beta2 ||u||_{L^2}^2.
\end{equation}
We first prove an auxiliary lemma regarding the integral cost in \eqref{eq:opt_ctrl_ens}.

\begin{lemma} \label{lem:conv_int_term}
Let us consider a sequence of admissible controls
$(u_m)_{m\in\NN} \subset \U$
such that $u_m\weak_{L^2}u_\infty$ as
$m\to\infty$. For every $m\in \NN \cup \{ \infty\}$,
let $Y_m:[0,1]\times \Theta\to \R_+$
be defined as follows:
\begin{equation}\label{eq:def_Y_m}
Y_m(t,\theta):= a(t,X_m(t,\theta),\theta),
\end{equation}
where $X_m:[0,1]\times \Theta\to \R^n$
is the application defined in
\eqref{eq:def_evol_ens} 
corresponding to the admissible control $u_m$. 
Then, we have that
\begin{equation} \label{eq:conv_int_term}
\lim_{m\to\infty}\, \sup_{(t,\theta)\in[0,1]\times\Theta}|Y_m(t,\theta) - Y_\infty(t,\theta)|
=0.
\end{equation}
\end{lemma}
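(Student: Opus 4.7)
The plan is to combine the uniform convergence of the trajectories $X_m \to X_\infty$ granted by Proposition~\ref{prop:unif_conv_map_X} with the uniform continuity of the running cost $a$ on a suitable compact set. Since the statement concerns the sup-norm on $[0,1]\times\Theta$, the pointwise estimate will have to be made uniform, and for that I need an a priori compact set in $\R^n$ containing \emph{all} trajectories of the sequence.

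First I would note that any $L^2$-weakly convergent sequence is $L^2$-bounded, so there exists $R>0$ with $\|u_m\|_{L^2}\le R$ for every $m\in\NN\cup\{\infty\}$. Invoking the equi-boundedness argument used in the proof of Proposition~\ref{prop:unif_conv_map_X} (the same bound already cited as Lemma~\ref{lem:bound_X} in the proof of Lemma~\ref{lem:point_conv_Lambda}), I obtain a compact set $K_R\subset\R^n$ such that
\[
X_m(t,\theta)\in K_R \qquad \text{for every } (t,\theta)\in[0,1]\times\Theta \text{ and every } m\in\NN\cup\{\infty\}.
\]

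Next, since $a:[0,1]\times\R^n\times\Theta\to\R_+$ is continuous and $[0,1]\times K_R\times\Theta$ is compact (recall that $\Theta$ is compact by hypothesis), the Heine--Cantor theorem yields uniform continuity of $a$ on this set. Fix $\varepsilon>0$; then there exists $\delta>0$ such that whenever $(t,x,\theta),(t,x',\theta)\in [0,1]\times K_R\times\Theta$ satisfy $|x-x'|_2<\delta$, one has $|a(t,x,\theta)-a(t,x',\theta)|<\varepsilon$.

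Finally, by Proposition~\ref{prop:unif_conv_map_X} there exists $m_0\in\NN$ such that for every $m\ge m_0$ we have $\sup_{(t,\theta)}|X_m(t,\theta)-X_\infty(t,\theta)|_2<\delta$. Since both $X_m(t,\theta)$ and $X_\infty(t,\theta)$ lie in $K_R$, uniform continuity applied with the common first and third arguments gives
\[
\sup_{(t,\theta)\in[0,1]\times\Theta}|Y_m(t,\theta)-Y_\infty(t,\theta)| = \sup_{(t,\theta)}|a(t,X_m(t,\theta),\theta)-a(t,X_\infty(t,\theta),\theta)|<\varepsilon
\]
for every $m\ge m_0$, which is exactly \eqref{eq:conv_int_term}. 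There is no real obstacle here beyond cleanly packaging compactness: the whole argument is a standard ``uniformly continuous $\circ$ uniformly convergent'' composition, with the only subtle point being the existence of the global compact set $K_R$, which is inherited from the $L^2$-boundedness of the weakly convergent sequence together with the sublinear growth bounds \eqref{eq:sub_lin_F_0}--\eqref{eq:sub_lin_F} and Gr\"onwall's inequality.
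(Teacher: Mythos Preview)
Your proof is correct and follows essentially the same approach as the paper: both use the $L^2$-boundedness of the weakly convergent sequence together with Lemma~\ref{lem:bound_X} to confine all trajectories in a compact set $K_R$, then invoke uniform continuity of $a$ on $[0,1]\times K_R\times\Theta$ and the uniform convergence $X_m\to_{C^0}X_\infty$ from Proposition~\ref{prop:unif_conv_map_X}. Your write-up is slightly more explicit in spelling out the $\varepsilon$--$\delta$ step, but the argument is identical.
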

\begin{proof}
Since the sequence $(u_m)_{m\in\NN}$ is 
weakly convergent, there exists
$R>0$ such that $||u_m||_{L^2}\leq R$
for every $m\in\NN\cup\{ \infty \}$.
For every $m\in \NN\cup\{ \infty \}$, let 
$X_m:[0,1]\times\Theta \to\R^n$ be the 
application defined in \eqref{eq:def_evol_ens}
corresponding to the control $u_m$.
In virtue of Lemma~\ref{lem:bound_X}, there exists a compact set 
$K\subset \R^n$ such that 
\begin{equation*}
X_m(t,\theta) \in K
\end{equation*}
for every $(t,\theta)\in[0,1]\times\Theta$
and for every $m\in\NN\cup\{ \infty \}$.
Recalling that the function
$a:[0,1]\times\R^n\times\Theta\to\R_+$
that defines the integral term in
\eqref{eq:def_fun_ens} is assumed
to be continuous, it follows that the
restriction
\begin{equation*}
\tilde a := a|_{[0,1]\times K\times\Theta}
\end{equation*}
is uniformly continuous.
In addition, Proposition~\ref{prop:unif_conv_map_X}
guarantees that 
$X_m\to_{C^0}X_\infty$ as 
$m\to\infty$. 
Therefore, observing that
\begin{equation}
Y_m(t,\theta) = \tilde a(t,X_m(t,\theta),\theta)
\end{equation}
for every $(t,\theta)\in[0,1]\times\Theta$ and
for every $m\in\NN\cup\{ \infty \}$, we 
deduce that \eqref{eq:conv_int_term} holds.
\end{proof}

We are now in position to prove that 
\eqref{eq:opt_ctrl_ens} admits a solution.

\begin{theorem}\label{thm:exist_minim}
Let $\F:\U\to\R_+$ be the functional defined in
\eqref{eq:def_fun_ens}. Then,
there exists $\hat u\in \U$ such that
\begin{equation*}
\F(\hat u) = \inf_\U \F.
\end{equation*}
\end{theorem}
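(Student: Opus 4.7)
The plan is to apply the direct method of the calculus of variations. Since $a\geq 0$ and $\beta>0$, the functional $\F$ is bounded below by $0$, hence the infimum $m:=\inf_{\U}\F\in [0,+\infty)$ is finite. I would then pick a minimizing sequence $(u_m)_{m\in\NN}\subset\U$, i.e., $\F(u_m)\to m$ as $m\to\infty$. The crucial observation is that, since the integral term in \eqref{eq:def_fun_ens} is non-negative, one has $\frac{\beta}{2}\|u_m\|_{L^2}^2\leq \F(u_m)$, and the convergence $\F(u_m)\to m$ implies that $(u_m)_{m\in\NN}$ is bounded in $\U$. By the weak pre-compactness of bounded sets in the Hilbert space $\U=L^2([0,1],\R^k)$, I extract a (non-relabeled) subsequence and a limit $\hat u\in\U$ such that $u_m\weak_{L^2}\hat u$ as $m\to\infty$.

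The next step is to prove weak lower semicontinuity of $\F$ along this minimizing sequence. The regularization term $u\mapsto \frac{\beta}{2}\|u\|_{L^2}^2$ is weakly lower semicontinuous by \eqref{eq:norm_weak_semicont}, so
\begin{equation*}
\frac{\beta}{2}\|\hat u\|_{L^2}^2 \leq \liminf_{m\to\infty}\frac{\beta}{2}\|u_m\|_{L^2}^2.
\end{equation*}
For the integral term, define $Y_m(t,\theta):=a(t,X_m(t,\theta),\theta)$ and $Y_\infty(t,\theta):=a(t,X_{\hat u}(t,\theta),\theta)$ as in Lemma~\ref{lem:conv_int_term}. That lemma gives uniform convergence $Y_m\to Y_\infty$ on the compact set $[0,1]\times \Theta$. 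Since both $\nu$ and $\mu$ are probability measures, the product measure $\nu\otimes \mu$ is finite, so uniform convergence passes to the double integral:
\begin{equation*}
\lim_{m\to\infty}\int_{\Theta}\int_0^1 Y_m(t,\theta)\,d\nu(t)\,d\mu(\theta) = \int_{\Theta}\int_0^1 Y_\infty(t,\theta)\,d\nu(t)\,d\mu(\theta).
\end{equation*}

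Combining the two estimates, I conclude that
\begin{equation*}
\F(\hat u) \leq \liminf_{m\to\infty}\F(u_m) = m = \inf_{\U}\F,
\end{equation*}
which shows that $\hat u$ is a minimizer. I do not anticipate a serious obstacle: the only nontrivial ingredient is the fact that the integral cost is actually continuous (not merely lower semicontinuous) with respect to the $L^2$-weak topology on bounded sets, and this is exactly the content of Lemma~\ref{lem:conv_int_term}, which itself rests on the uniform $C^0$-stability result of Proposition~\ref{prop:unif_conv_map_X}. The coercivity provided by the Tikhonov-type regularization $\frac{\beta}{2}\|u\|_{L^2}^2$ with $\beta>0$ is what makes the minimizing sequence bounded in $\U$, so this assumption cannot be dropped.
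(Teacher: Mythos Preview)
Your proof is correct and follows essentially the same route as the paper: both use the direct method, exploiting the non-negativity of $a$ together with the $\frac{\beta}{2}\|u\|_{L^2}^2$ term for coercivity/boundedness of a minimizing sequence, and then combine the weak lower semicontinuity of the $L^2$-norm with Lemma~\ref{lem:conv_int_term} to handle the integral cost. The only cosmetic difference is that the paper phrases the argument abstractly in terms of coercivity and sequential weak lower semicontinuity, while you work directly with a minimizing sequence.
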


\begin{proof}
We establish the thesis by means of the direct
method of calculus of variations (see, e.g.,
\cite[Theorem~1.15]{D93}).
Namely, we show that the functional 
$\F$ is coercive and lower semi-continuous
with respect to the weak topology of
$L^2$.
We first address the coercivity, i.e., we prove
that the sub-level sets of the functional
$\F$ are $L^2$-weakly pre-compact.
To see that, it is sufficient to
observe that for every
$M\geq0$ we have
\begin{equation} \label{eq:sublevel_inclusion}
\{ u\in\U: \F(u)\leq M \}\subset \{ u\in \U:
 ||u||_{L^2}^2 \leq 2M/\beta \},
\end{equation}
where we used the fact that the first term
at the right-hand side of \eqref{eq:def_fun_ens}
is non-negative.
To study the lower semi-continuity, let us consider
a sequence of admissible controls 
$(u_m)_{m\in\NN}\subset \U$
such that $u_m\weak_{L^2}u_\infty$ as $m\to\infty$.
Using the family of applications
$(Y_m)_{m\in\NN\cup\{\infty\}}$ defined as in
\eqref{eq:def_Y_m}, we observe that the integral
term at the right-hand side of 
\eqref{eq:def_fun_ens} can be rewritten as follows
\begin{equation*}
\int_\Theta\int_0^1 a(t,x_{u_m}^\theta(t),\theta)
\,d\nu(t)\,d\mu(\theta) =
\int_\Theta\int_0^1 Y_m(t,\theta)\,d\nu(t)\,d\mu
(\theta)
\end{equation*}
for every $m\in \NN\cup\{ \infty \}$.
Moreover, the uniform convergence
$Y_m\to_{C^0} Y_\infty$ as $m\to\infty$
provided by
Lemma~\ref{lem:conv_int_term} implies
in particular 
%that
%\begin{equation*}
%\lim_{m\to\infty}
%\int_\Theta\int_0^1 Y_m(t,\theta)\,d\nu(t)\,d\mu
%(\theta)
%= 
%\int_\Theta\int_0^1 Y_\infty(t,\theta)\,d\nu(t)\,d
%\mu (\theta), 
%\end{equation*}
%that in turn gives 
the convergence of the 
integral term at the right-hand side of
\eqref{eq:def_fun_ens}:
\begin{equation}\label{eq:conv_integrals}
\lim_{m\to\infty}
\int_\Theta\int_0^1 
a(t,x_{u_m}(t)^\theta,\theta)
\,d\nu(t)\,d\mu
(\theta)
=
\int_\Theta\int_0^1 
a(t,x_{u_\infty}(t)^\theta,\theta)
\,d\nu(t)\,d\mu
(\theta).
\end{equation}
Finally, 
combining \eqref{eq:norm_weak_semicont} with
\eqref{eq:conv_integrals}, we deduce that
\begin{equation*}
\F(u_\infty) \leq \liminf_{m\to\infty}\F(u_m).
\end{equation*}
This proves that  the functional $\F$
is lower semi-continuous, and therefore we 
obtain the thesis.
\end{proof}

\begin{remark}\label{rmk:role_beta}
The  constant $\beta>0$ in 
\eqref{eq:def_fun_ens} is aimed at
balancing the effect of 
the squared $L^2$-norm regularization and of
the integral term. This fact can be crucial in some
cases, relevant for applications. Indeed, let us 
assume that, for every $\e>0$, there exists
$u_\e\in \U$ such that 
\begin{equation*}
\int_\Theta 
\int_0^1a(t,x_{u_\e}^\theta(t),\theta)\,d\nu(t)
\,d\mu(\theta) \leq \frac\e2.
\end{equation*}
Then, let us set 
\begin{equation*}
\beta = \frac{\e}{||u_\e||_{L^2}^2},
\end{equation*}
and let $\hat u \in \U$ be a minimizer for 
the functional $\F:\U\to\R_+$ defined as in
\eqref{eq:def_fun_ens}. 
Therefore, we have that
\begin{equation*}
\int_\Theta 
\int_0^1a(t,x_{\hat u}^\theta(t),\theta)\,d\nu(t)
\,d\mu(\theta)
\leq \F(\hat u) \leq \F(u_\e) \leq \e.
\end{equation*}
In particular, this means that, when the constant
$\beta>0$ is chosen small enough, 
the integral cost achieved by the
minimizers of $\F$ can be made arbitrarily small. 
\end{remark}

\begin{remark} \label{rem:non_neg_cost}
The non-negativity assumption on the cost function $a:[0,1]\times\R^n\times\Theta\to\R_+$ is used to deduce the inclusion \eqref{eq:sublevel_inclusion}. This hypothesis can be relaxed by requiring, for example, that $a$ is bounded from below. More in general, our analysis is still valid for any continuous function $a:[0,1]\times\R^n\times\Theta\to\R$ such that the sublevels $\{ u\in\U: \F(u)\leq M \}\subset \U$ are bounded in $L^2$ for every 
$M\in \R$. For simplicity, we will assume throughout the paper that $a$ is non-negative.
\end{remark}

\end{section}

\begin{section}{Reduction to finite ensembles via
$\Gamma$-convergence} \label{sec:G_conv}

In this section we deal with the task of approximating infinite ensembles with growing-in-size finite ensembles, such that the minimizers of the corresponding ensemble optimal control problems are converging. 
In this framework, a natural attempt consists in approximating  the assigned probability measure $\mu$ on the space of parameters $\Theta$ with a probability measure $\bar \mu$ that charges a finite number of elements of $\Theta$.
Therefore, if $\mu$ and $\bar \mu$ are close in some appropriate sense, we may expect that the solutions of the minimization problem involving $\bar \mu$ provide approximations of the minimizers of the original ensemble optimal control problem \eqref{eq:opt_ctrl_ens}.
This argument can be made rigorous using the tools of $\Gamma$-convergence. We briefly recall below this notion. For a thorough introduction to this topic, we refer the reader to the textbook \cite{D93}.

\begin{defn} \label{defn:G_conv}
Let $(\X,d)$ be a metric space, and for every $N\geq 1$ let $\F^N:\X\to\R\cup \{+\infty \}$ be a functional defined over $X$. 
The sequence $(\F^N)_{N\geq 1}$ is said to $\Gamma$-converge to a functional $\F:\X\to\R\cup \{+\infty \}$ if the following conditions holds:
\begin{itemize}
\item \emph{liminf condition}: for every sequence
$(u_N)_{N\geq 1}\subset \X$ such that $u_N\to_\X u$
as $N\to\infty$ the following inequality holds
\begin{equation}\label{eq:liminf_cond}
\F(u) \leq \liminf_{N\to\infty} \F^N(u_N);
\end{equation}
\item \emph{limsup condition}: 
for every $x\in \X$ there exists a sequence 
$(u_N)_{N\geq 1}\subset \X$ such that 
$u_N\to_\X u$ as $N\to\infty$ and such that
the following inequality holds:
\begin{equation}\label{eq:limsup_cond}
\F(u) \geq \limsup_{N\to\infty} \F^N(u_N).
\end{equation}
\end{itemize}
If the conditions listed above are satisfied, then
we write $\F^N \to_\Gamma \F$ as $N\to\infty$.
\end{defn} 

The importance of the $\Gamma$-convergence is due to the fact that it relates the minimizers of the functionals $(\F^N)_{N\geq 1}$ to the minimizers of the limiting functional $\F$. Namely, under the hypothesis that the functionals of the sequence $(\F^N)_{N\geq 1}$ are equi-coercive, if $\hat u_N \in \mathrm{arg\, min}\, \F^N$ for every $N\geq 1$, then the sequence $(\hat u_N)_{N\geq1}$ is pre-compact in $\X$, and any of its limiting points is a minimizer for $\F$ (see \cite[Corollary~7.20]{D93}).
In other words, the problem of minimizing $\F$ can be approximated by the minimization of $\F^N$, when $N$ is sufficiently large. 

We now focus on the ensemble optimal control problem \eqref{eq:opt_ctrl_ens} studied in Section~\ref{sec:opt_ctrl_ens} and on the functional $\F:\U\to\R_+$ defined in \eqref{eq:def_fun_ens}. 
As done in the proof of Theorem~\ref{thm:exist_minim}, it is convenient to equip the space of admissible controls $\U:=L^2([0,1],\R^k)$ with the weak topology. However, Definition~\ref{defn:G_conv} requires the domain $\X$ where the limiting and the approximating functionals are defined to be a metric space. Unfortunately, the weak topology of $L^2$ is metrizable only when restricted to bounded sets (see, e.g., \cite[Remark~3.3 and Theorem~3.29]{B11}). 
In the next lemma we see how we should choose the restriction without losing any of the minimizers of $\F$.

\begin{lemma}\label{lem:restr}
Let $\F:\U\to\R_+$ be the functional defined in
\eqref{eq:def_fun_ens}. Therefore, there exists 
$\rho>0$ such that, if $\hat u \in \U$
satisfies $\F(\hat u)= \inf_\U \F$, then
\begin{equation} \label{eq:rad_restr}
||\hat u ||_{L^2} \leq \rho.
\end{equation} 
\end{lemma}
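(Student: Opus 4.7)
The plan is to exploit the coercive term $\frac{\beta}{2}\|u\|_{L^2}^2$ in the definition of $\F$ and compare any candidate minimizer against a fixed reference control, for instance $u\equiv 0$. Since the cost function $a$ is non-negative, the integral term in \eqref{eq:def_fun_ens} is non-negative, so for every $u\in\U$ we have the lower bound
\begin{equation*}
\F(u) \geq \frac{\beta}{2}\|u\|_{L^2}^2.
\end{equation*}

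Next, I would bound $\F(0)$ from above by an explicit constant depending only on the data. Indeed, for the zero control the trajectories $x_0^\theta:[0,1]\to\R^n$ of the ensemble \eqref{eq:ens_ctrl_sys} are uniformly bounded in $\theta$ and $t$: this follows from the sub-linear growth \eqref{eq:sub_lin_F_0}, the uniform bound \eqref{eq:bound_ens_init_val} on the initial data, and Grönwall's inequality (this is essentially the content of the equi-boundedness Lemma invoked in the proof of Proposition~\ref{prop:unif_conv_map_X}). Hence there is a compact set $K\subset\R^n$ containing $x_0^\theta(t)$ for all $(t,\theta)\in[0,1]\times\Theta$, and by continuity of $a$ on the compact set $[0,1]\times K\times \Theta$ we obtain a constant $M>0$ with
\begin{equation*}
\F(0) = \int_\Theta\int_0^1 a(t,x_0^\theta(t),\theta)\,d\nu(t)\,d\mu(\theta) \leq M.
\end{equation*}

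Finally, if $\hat u\in\U$ satisfies $\F(\hat u)=\inf_\U \F$, then in particular $\F(\hat u)\leq \F(0)\leq M$, so combining with the lower bound yields
\begin{equation*}
\frac{\beta}{2}\|\hat u\|_{L^2}^2 \leq \F(\hat u) \leq M,
\end{equation*}
and the thesis follows by setting $\rho:=\sqrt{2M/\beta}$. There is no real obstacle here: the argument is a direct application of the coercivity of the regularization term, and the only point that requires a (minor) verification is the finiteness of $\F(0)$, which is handled by the uniform boundedness of the trajectories of the ensemble together with the continuity of $a$.
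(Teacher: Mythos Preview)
Your argument is correct and follows essentially the same route as the paper: compare any minimizer against the reference control $\bar u\equiv 0$, use the non-negativity of $a$ to bound $\frac{\beta}{2}\|\hat u\|_{L^2}^2\leq \F(\hat u)\leq \F(0)$, and set $\rho:=\sqrt{2\F(0)/\beta}$. Your additional justification that $\F(0)<\infty$ is more explicit than the paper (which takes this for granted since $\F:\U\to\R_+$), but otherwise the proofs coincide.
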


\begin{proof}
Let us consider the control $\bar u \equiv 0$. If 
$\hat u \in \U$ is a minimizer for 
the functional $\F$, then 
we have $\F(\hat u) \leq \F(\bar u)$.
Moreover, recalling that the function
$a:[0,1]\times\R^n\times \Theta\to\R_+$
that designs the integral cost in 
\eqref{eq:opt_ctrl_ens} is non-negative,
we deduce that
\[
\frac\be2 ||\hat u||_{L^2}^2 \leq \F(\hat u)
\leq \F(\bar u).
\] 
Thus, to prove \eqref{eq:rad_restr} it is sufficient
to set $\rho:= \sqrt{2\F(\bar u)/\be}$.
\end{proof}

The previous result implies that the following inclusion holds
\begin{equation*}
\mathrm{arg\, min}\, \F \subset \X,
\end{equation*}
where we set 
\begin{equation} \label{eq:def_restr_sp}
\X:=\{ u\in \U: ||u||_{L^2} \leq\rho\},
\end{equation}
and where $\rho>0$ is provided by Lemma~\ref{lem:restr}.
Since $\X$ is a closed ball of $L^2$, the weak topology induced on $\X$ is metrizable. Hence, we can restrict the functional $\F:\U\to\R_+$ to $\X$ to construct an approximation in the sense of $\Gamma$-convergence. With a slight abuse of notations, we shall continue to denote by $\F$ the functional restricted to $\X$. As anticipated at the beginning of the present section, the construction of the functionals $(\F^N)_{N\geq 1}$ relies on the introduction of a proper sequence of probability measures $(\mu_N)_{N\geq1}$ on $\Theta$ that approximate the probability measure $\mu$ prescribing the integral cost in \eqref{eq:opt_ctrl_ens}.  
We first recall the notion of weak convergence of probability measures. For further details, see, e.g., the textbook \cite[Definition~3.5.1]{C05}.

\begin{defn} \label{def:weak_conv_prob}
Let $(\mu_N)_{N\geq 1}$ be a sequence of Borel probability measures on the compact set $\Theta$. The sequence $(\mu_N)_{N\geq 1}$ is weakly convergent to the probability measure $\mu$ as $N\to\infty$ if the following identity holds
\begin{equation} \label{eq:weak_conv_meas}
\lim_{N\to\infty} \int_\Theta f(\theta)\, d\mu_N(\theta)
= 
\int_\Theta f(\theta)\, d\mu(\theta),
\end{equation}
for every function $f\in C^0(\Theta,\R)$. If the previous condition is satisfied, we write $\mu_N\weak^*\mu$ as $N\to\infty$.
\end{defn}

For every $N\geq1$ we consider a subset $\{ \theta_1,\ldots,\theta_N \}\subset \Theta$ and a probability measure that charges these elements:
\begin{equation} \label{eq:emp_meas}
\mu_N := \sum_{j=1}^N \alpha_j \delta_{\theta_j},
\qquad \mbox{ where } \qquad
\sum_{j=1}^N\alpha_j=1, \quad 
\alpha_j> 0 \quad \forall
j=1,\ldots,N.
\end{equation}
We assume that the sequence $(\mu_N)_{N\geq1}$ approximates the probability measure $\mu$ in the weak sense, i.e., we require that $\mu_N\weak^* \mu$ as $N\to\infty$.

\begin{remark} \label{rem:discr_prob_meas}
In the applications, there are several feasible strategies to achieve the convergence $\mu_N\weak^* \mu$ as $N\to\infty$, and the crucial aspect is whether the probability measure $\mu$ is explicitly known or not. If it is, the discrete approximating measures can be defined, for example, by following the construction proposed in \cite[Lemma~5.2]{BK19}. We observe that the problem of the \textit{optimal} approximation of a probability measure with a convex combination of a fixed number of Dirac deltas is an active research field. For further details, see, e.g.,  the recent paper \cite{MSS21}. On the other hand, in the practice, it may happen that there is no direct access to the probability measure $\mu$, but it is only possible to collect samplings of random variables distributed as $\mu$. In this case, the discrete approximating measures can be produced through a \textit{data-driven} approach. Namely, if $\{ \theta_1,\ldots,\theta_N \}$ are the empirically observed samplings, a natural choice is to set in \eqref{eq:emp_meas} $\alpha^j = \frac1N$ for every $j=1,\ldots,N$.
\end{remark}

We are now in position to introduce the family of functionals $(\F^N)_{N\geq 1}$. For every $N\geq 1$, let $\F^N:\X\to\R_+$ be defined as follows
\begin{equation} \label{eq:def_fun_N}
\F^N(u) := \int_\Theta\int_0^1
a(t,x^\theta_u(t),\theta) \,d\nu(t) d\mu_N(\theta)
+ \frac\be2 ||u||_{L^2}^2,
\end{equation}
where $x_u^\theta:[0,1]\to\R^n$ denotes the solution on \eqref{eq:ens_ctrl_sys} corresponding to the parameter $\theta\in \Theta$ and to the control $u\in \X$. We observe that $\F^N$ and $\F$ have essentially the same structure: the only difference is that the integral term of \eqref{eq:def_fun_ens} involves the measure $\mu$, while \eqref{eq:def_fun_N} features the measure $\mu_N$. 
Before proceeding to the main theorem of the section, we recall an auxiliary result.

\begin{lemma} \label{lem:conv_prod_meas}
Let $(\mu_N)_{N\geq1}$ be a sequence of probability
measures on $\Theta$ such that 
$\mu_N\weak^*\mu$ as $N\to\infty$, and let $\nu$
be a probability measure on $[0,1]$.
Then,
the sequence of the product probability measures
$(\nu\otimes \mu_N)_{N\geq1}$ on the
product space $[0,1]\times \Theta$ satisfies 
$\nu\otimes \mu_N\weak^* \nu\otimes \mu$
as $N\to\infty$.
\end{lemma}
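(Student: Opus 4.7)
The plan is to verify the weak convergence by testing against an arbitrary $f \in C^0([0,1]\times \Theta, \R)$, reducing to the assumed weak convergence $\mu_N \weak^* \mu$ by means of Fubini's theorem. Specifically, I would fix $f \in C^0([0,1]\times \Theta, \R)$ and define the partial integral
\[
g(\theta) := \int_0^1 f(t,\theta)\, d\nu(t).
\]
The goal is to show that $g \in C^0(\Theta, \R)$, so that the hypothesis $\mu_N \weak^* \mu$ can be applied to it, and then to use Fubini's theorem to re-express $\int g\, d\mu_N$ and $\int g\, d\mu$ as integrals against the product measures.

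The continuity of $g$ is the only nontrivial ingredient. Since $[0,1]\times \Theta$ is compact (recall that $\Theta$ is assumed compact), $f$ is uniformly continuous on it. Therefore, for every $\e>0$ there exists $\delta>0$ such that $|f(t,\theta_1) - f(t,\theta_2)| \leq \e$ whenever $|\theta_1 - \theta_2|_2 \leq \delta$, uniformly in $t\in [0,1]$. Integrating with respect to the probability measure $\nu$ yields $|g(\theta_1) - g(\theta_2)| \leq \e$, which gives the (uniform) continuity of $g$.

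With $g \in C^0(\Theta, \R)$ established, the hypothesis $\mu_N \weak^* \mu$ and Definition~\ref{def:weak_conv_prob} imply
\[
\lim_{N\to\infty} \int_\Theta g(\theta)\, d\mu_N(\theta) = \int_\Theta g(\theta)\, d\mu(\theta).
\]
Applying Fubini's theorem on both sides (both $\nu\otimes\mu_N$ and $\nu\otimes\mu$ are Borel probability measures on the product space, and $f$ is bounded and continuous) rewrites this as
\[
\lim_{N\to\infty} \int_{[0,1]\times\Theta} f \, d(\nu\otimes\mu_N) = \int_{[0,1]\times\Theta} f \, d(\nu\otimes\mu).
\]
Since this identity holds for every $f \in C^0([0,1]\times \Theta, \R)$, the definition of weak convergence of probability measures yields $\nu\otimes \mu_N \weak^* \nu\otimes \mu$ as $N\to\infty$. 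The only mildly delicate step is the continuity of $g$, but this is immediate from compactness of $[0,1]\times\Theta$; there is no real obstacle here beyond bookkeeping.
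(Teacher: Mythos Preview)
Your proof is correct and follows exactly the approach the paper takes: the paper's proof consists of the single sentence ``The thesis follows directly from Fubini Theorem and Definition~\ref{def:weak_conv_prob}.'' Your version simply fills in the one nontrivial detail the paper leaves implicit, namely the continuity of the partial integral $g$.
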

\begin{proof}
The thesis follows directly
from Fubini Theorem and
Definition~\ref{def:weak_conv_prob}.
\end{proof}

We now show that the sequence of functionals
$(\F^N)_{N\geq1}$ introduced in \eqref{eq:def_fun_N}
is $\Gamma$-convergent to the functional that
defines the ensemble optimal control problem
\eqref{eq:opt_ctrl_ens}. 

\begin{theorem}\label{thm:G_conv}
Let $\X\subset \U$ be the set defined in
\eqref{eq:def_restr_sp}, equipped with the
weak topology of $L^2$. For every $N\geq1$, let
$\F^N:\X\to\R_+$ be the functional introduced
in \eqref{eq:def_fun_N}, and let $\F:\X\to\R_+$ be
the restriction to $\X$ of the application defined
in \eqref{eq:def_fun_ens}. Then, we have
$\F^N\to_\Gamma \F$ as $N\to\infty$.
\end{theorem}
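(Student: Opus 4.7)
The plan is to verify the two conditions of Definition~\ref{defn:G_conv} separately. Since $\X$ equipped with the weak $L^2$-topology is metrizable, it suffices to work with sequences. Throughout, the penalty term $\frac{\beta}{2}\|u\|_{L^2}^2$ appears identically in $\F^N$ and $\F$, so the analysis really concerns the integral term.

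For the \emph{limsup condition}, given $u\in\X$, I would take the constant recovery sequence $u_N \equiv u$. Then the only thing changing with $N$ is the measure $\mu_N$. I would observe that the function $(t,\theta)\mapsto a(t,x_u^\theta(t),\theta)$ is continuous on the compact set $[0,1]\times\Theta$: this follows from the continuity of $a$ together with the fact that, by applying Proposition~\ref{prop:unif_conv_map_X} with the constant sequence $u_m \equiv u$ evaluated against arbitrary perturbations of $\theta$ (or, equivalently, by a standard continuous-dependence argument using the Lipschitz hypotheses \eqref{eq:lips_F_0}-\eqref{eq:lips_F}), the map $(t,\theta)\mapsto X_u(t,\theta)$ is continuous. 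Then Lemma~\ref{lem:conv_prod_meas} yields $\nu\otimes\mu_N \weak^* \nu\otimes\mu$, which gives convergence of the integral term, and therefore $\F^N(u)\to \F(u)$.

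For the \emph{liminf condition}, let $(u_N)_{N\geq 1}\subset \X$ be weakly convergent to some $u\in \X$. The $L^2$-norm term is handled directly by \eqref{eq:norm_weak_semicont}. For the integral term I would use the notation of Lemma~\ref{lem:conv_int_term} and write $Y_N(t,\theta):=a(t,x_{u_N}^\theta(t),\theta)$ and $Y_\infty(t,\theta):=a(t,x_u^\theta(t),\theta)$. Splitting
\[
\int_\Theta\!\!\int_0^1 Y_N\,d\nu\,d\mu_N
= \int_\Theta\!\!\int_0^1 (Y_N-Y_\infty)\,d\nu\,d\mu_N
+ \int_\Theta\!\!\int_0^1 Y_\infty\,d\nu\,d\mu_N,
\]
the first summand tends to $0$ because Lemma~\ref{lem:conv_int_term} provides uniform convergence $Y_N\to Y_\infty$ on $[0,1]\times\Theta$ and each $\nu\otimes\mu_N$ is a probability measure; the second summand tends to $\int_\Theta\int_0^1 Y_\infty\,d\nu\,d\mu$ because $Y_\infty$ is continuous on the compact set $[0,1]\times\Theta$ and $\nu\otimes\mu_N\weak^*\nu\otimes\mu$ by Lemma~\ref{lem:conv_prod_meas}. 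Adding back the weakly lower semicontinuous penalty $\frac{\beta}{2}\|\cdot\|_{L^2}^2$ yields $\F(u)\le \liminf_{N\to\infty}\F^N(u_N)$.

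The conceptually delicate step is the liminf condition, where two independent limits act simultaneously — the controls $u_N$ converge only weakly, and the measures $\mu_N$ converge only weakly-$*$. The key device that unlocks the argument is the affine-control-specific Proposition~\ref{prop:unif_conv_map_X}, which upgrades weak convergence of the controls to uniform convergence of the trajectories in $(t,\theta)$; this upgrade is precisely what allows the $(Y_N-Y_\infty)$ remainder to be controlled uniformly against the probability measures $\mu_N$ and the $Y_\infty$ part to be handled by weak-$*$ convergence of measures, decoupling the two limiting processes.
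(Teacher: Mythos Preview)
Your proposal is correct and follows essentially the same approach as the paper: the constant recovery sequence for the limsup condition, and for the liminf condition the same splitting of the integral term into a $(Y_N-Y_\infty)$ piece controlled by the uniform convergence of Lemma~\ref{lem:conv_int_term} and a $Y_\infty$ piece handled via Lemma~\ref{lem:conv_prod_meas}, combined with the weak lower semicontinuity \eqref{eq:norm_weak_semicont} of the $L^2$-norm. Your write-up is in fact slightly more explicit than the paper's about why $Y_\infty$ is continuous and why the decoupling works.
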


\begin{proof}
We first establish the \emph{liminf condition}.
Let us consider a sequence of controls
$(u_N)_{N\geq1}\subset \X$ such that
$u_N\weak_{L^2} u_\infty$ as $N\to\infty$.
As done in Lemma~\ref{lem:conv_int_term},
for every $N\in \NN \cup\{ \infty \}$ let us 
define the functions $Y_N:[0,1]\times\Theta\to\R_+$
as follows:
\begin{equation} \label{eq:def_aux_Y_N}
Y_N(t,\theta) := a(t,X_N(t,\theta),\theta)
\end{equation}
for every $(t,\theta)\in[0,1]\times\Theta$, where,
for every $N\in \NN\cup \{\infty\}$,
$X_N:[0,1]\times\Theta\to\R^n$ is the mapping 
introduced in \eqref{eq:def_evol_ens} 
that describes the evolution of the ensemble
in correspondence of the admissible control
$u_N$. 
%Let us consider a subsequence $(u_{N_j})_{j\geq 1}$
%such that 
%\begin{equation*} %\label{eq:liminf_ineq_subs}
%C := \liminf_{N\to\infty} \F^N(u_N) =
%\liminf_{j\to\infty}\F_{N_j}(u_{N_j}).
%\end{equation*}
%Without loss of generality, we may assume that 
From \eqref{eq:def_aux_Y_N} and the definition
of the functionals $(\F^N)_{N\geq 1}$ in 
\eqref{eq:def_fun_N}, we obtain that 
\begin{equation} \label{eq:F_N_Y_N}
\F^N(u_N) = \int_\Theta \int_0^1 Y_N(t,\theta)
\,d\nu(t) d\mu_N(\theta) + \frac\be2||u_N||_{L^2}^2 
\end{equation}
for every $N\in \NN$. Moreover, we observe that
the uniform convergence 
$Y_N\to_{C^0}Y_\infty$ as $N\to\infty$ 
guaranteed by Lemma~\ref{lem:conv_int_term}
implies that
\begin{equation} \label{eq:int_cost_aux_1}
\lim_{N\to\infty} \int_\Theta\int_0^1
|Y_N(t,\theta) - Y_\infty(t,\theta)| \,d\nu(t)
d\mu_N(\theta) = 0.
\end{equation}
Therefore, using the triangular inequality
and Lemma~\ref{lem:conv_prod_meas}, 
from \eqref{eq:int_cost_aux_1} we deduce that
\begin{equation} \label{eq:int_cost_aux_2}
\lim_{N\to\infty} \int_\Theta\int_0^1
Y_N(t,\theta) 
\,d\nu(t) d\mu_N(\theta)
=
\int_\Theta\int_0^1
Y_\infty(t,\theta) \,d\nu(t)
d\mu(\theta).
\end{equation}
Combining  \eqref{eq:F_N_Y_N} with 
\eqref{eq:int_cost_aux_2} and 
\eqref{eq:norm_weak_semicont}, we 
have that
\begin{equation*}
\F(u_\infty) \leq \liminf_{N\to\infty} \F^N(u_N),
\end{equation*}
which concludes the first part of the proof.

We now establish the \emph{limsup condition}.
For every $u\in \X$, let us consider the constant
sequence $u_N = u$ for every $N\geq 1$.
In virtue of Lemma~\ref{lem:conv_prod_meas}, we 
have that
\begin{equation} \label{eq:limsup_aux}
\lim_{N\to\infty} \int_\Theta\int_0^1
a(t,X_u(t,\theta),\theta)\, d\nu(t)d\mu_N(\theta)
= 
\int_\Theta\int_0^1
a(t,X_u(t,\theta),\theta)\, d\nu(t)d\mu(\theta)
\end{equation}
for every $u\in\X$, where $X_u:[0,1]\times\Theta
\to\R^n$ is defined as in \eqref{eq:def_evol_ens}.
This fact gives
\begin{equation*}
\F(u) = \lim_{N\to\infty}\F^N(u)
\end{equation*}
for every $u\in\X$, and this shows that the 
\emph{limsup condition} holds.
\end{proof}

\begin{remark} \label{rmk:exist_min_F_N}
We observe that Theorem~\ref{thm:exist_minim} holds also for $\F^N:\X\to\R_+$ for every $N\in\NN$. Indeed, the domain $\X$ is itself sequentially weakly compact, and the convergence \eqref{eq:conv_integrals} occurs also with the probability measure $\mu_N$ in place of $\mu$.
Therefore, as the functional is $\F^N$ coercive and sequentially lower semi-continuous with respect to the weak topology of $L^2$, it admits a minimizer.
\end{remark}

The next result is a direct consequence of the $\Gamma$-convergence result established in Theorem~\ref{thm:G_conv}. Indeed, as anticipated before, the fact that the minimizers of the functionals $(\F^N)_{N\in\NN}$ provide approximations of the minimizers of the limiting functional $\F$ is a well-established fact, as well as the convergence $\inf_\X \F^N \to \inf_\X \F$ as $N\to\infty$ (see \cite[Corollary~7.20]{D93}).
We stress the fact that, usually, the approximation of the minimizers occurs in the topology that underlies the $\Gamma$-convergence result. However, we can actually prove that, in this case, the approximation is provided with respect to the \emph{strong} topology of $L^2$, and not just in the weak sense. Similar phenomena have been recently described in \cite[Theorem~7.4]{S1} and in \cite[Remark~6]{S2}.

\begin{corollary} \label{cor:conv_min}
Let $\X\subset \U$ be the set defined in
\eqref{eq:def_restr_sp}. For every $N\geq1$, let
$\F^N:\X\to\R_+$ be the functional introduced
in \eqref{eq:def_fun_N} and let $\hat u_N \in \X$
be any of its minimizers. 
Finally, let $\F:\X\to\R_+$ be
the restriction to $\X$ of the application defined
in \eqref{eq:def_fun_ens}.
Then, we have 
\begin{equation} \label{eq:conv_inf}
\inf_\X \F = \lim_{N\to\infty} \inf_\X\F^N.
\end{equation} 
Moreover, the sequence
$(\hat u_N)_{N\in\NN}$ is pre-compact with
respect to the strong topology of $L^2$, and 
any limiting point of this sequence is a minimizer
of $\F$.
\end{corollary}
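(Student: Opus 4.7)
The plan is to combine the standard $\Gamma$-convergence machinery with the Hilbertian "weak-plus-norm-convergence implies strong" trick.

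First, I would apply the classical consequences of $\Gamma$-convergence for equi-coercive sequences. The domain $\X$ is bounded in $L^2$, hence weakly sequentially compact by the Banach--Alaoglu theorem, so the equi-coercivity of $(\F^N)_{N\in\NN}$ in the metrizable weak topology on $\X$ is automatic. Together with Theorem~\ref{thm:G_conv}, \cite[Corollary~7.20]{D93} delivers the convergence of infima \eqref{eq:conv_inf}, and yields that any sequence of minimizers $(\hat u_N)_{N\in\NN}$ is relatively compact in the $L^2$-weak topology of $\X$ with every cluster point a minimizer of $\F$. So far everything is standard, and the only remaining task is to upgrade \emph{weak} pre-compactness to \emph{strong} pre-compactness in $L^2$.

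For this upgrade, I would pick an arbitrary subsequence (still denoted $(\hat u_N)_{N\in\NN}$) and extract a further subsequence with $\hat u_N \weak_{L^2} u_\infty$ for some minimizer $u_\infty\in \mathrm{arg\,min}_\X \F$. The key observation is that the integral part of $\F^N(\hat u_N)$ converges to the integral part of $\F(u_\infty)$. Indeed, defining $Y_N(t,\theta):=a(t,X_{\hat u_N}(t,\theta),\theta)$ as in \eqref{eq:def_aux_Y_N}, Lemma~\ref{lem:conv_int_term} gives $Y_N \to Y_\infty$ uniformly on $[0,1]\times\Theta$, and Lemma~\ref{lem:conv_prod_meas} together with the triangle inequality (exactly as in the liminf step of Theorem~\ref{thm:G_conv}) produces
\begin{equation*}
\lim_{N\to\infty}\int_\Theta\!\!\int_0^1 Y_N(t,\theta)\,d\nu(t)d\mu_N(\theta) = \int_\Theta\!\!\int_0^1 Y_\infty(t,\theta)\,d\nu(t)d\mu(\theta).
\end{equation*}

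Combining this with $\F^N(\hat u_N) = \inf_\X \F^N \to \inf_\X \F = \F(u_\infty)$ and subtracting the integral terms, I obtain
\begin{equation*}
\lim_{N\to\infty}\tfrac{\beta}{2}\|\hat u_N\|_{L^2}^2 = \tfrac{\beta}{2}\|u_\infty\|_{L^2}^2,
\end{equation*}
i.e.\ $\|\hat u_N\|_{L^2}\to \|u_\infty\|_{L^2}$. Since $\U=L^2([0,1],\R^k)$ is a Hilbert space, weak convergence together with convergence of norms forces strong convergence $\hat u_N \to u_\infty$ in $L^2$. As every subsequence of $(\hat u_N)_{N\in\NN}$ admits a further subsequence strongly converging to a minimizer of $\F$, the whole sequence is strongly pre-compact and all its cluster points lie in $\mathrm{arg\,min}_\X \F$.

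There is no real obstacle beyond correctly piecing these steps together; the only subtlety to keep an eye on is that the convergence of the integral term uses the approximating measures $\mu_N$ rather than $\mu$, so one must invoke Lemma~\ref{lem:conv_prod_meas} (weak-$*$ convergence of the product measures) and not merely the uniform convergence of $Y_N$.
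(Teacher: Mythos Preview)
Your proposal is correct and follows essentially the same route as the paper: invoke \cite[Corollary~7.20]{D93} together with Theorem~\ref{thm:G_conv} for \eqref{eq:conv_inf} and weak pre-compactness, then along a weakly convergent subsequence use the argument of \eqref{eq:int_cost_aux_2} (Lemma~\ref{lem:conv_int_term} plus Lemma~\ref{lem:conv_prod_meas}) to pass to the limit in the integral term and deduce convergence of the $L^2$-norms, hence strong convergence. The only cosmetic difference is that you spell out the ``weak plus norm convergence implies strong'' step explicitly, which the paper leaves implicit.
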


\begin{proof}
Owing to Theorem~\ref{thm:G_conv}, we have that
$\F^N\to_\Gamma \F$ as $N\to\infty$ with respect
to the weak topology of $L^2$. Therefore, 
from \cite[Corollary~7.20]{D93} it follows
that \eqref{eq:conv_inf} holds and that
the sequence of minimizers $(\hat u_N)_{N\in\NN}$
is pre-compact with respect to the weak topology
of $L^2$, and its limiting points are minimizers
of $\F$.
To conclude we have to prove that it is pre-compact
with respect to the strong topology, too.
Let us consider a subsequence 
$(\hat u_{N_j})_{j\in\NN}$
such that $\hat u_{N_j}\weak_{L^2}\hat u_\infty$
as $j\to\infty$. Using the fact that 
$\hat u_\infty$ is a minimizer for $\F$,
as well as $\hat u_{N_j}$ is for $\F^{N_j}$
for every $j\in\NN$, from \eqref{eq:conv_inf}
it follows that
\begin{equation} \label{eq:conv_val_min}
\F(\hat u_\infty) = \lim_{j\to\infty} \F^{N_j}
(\hat u_{N_j}).
\end{equation}
Moreover, with the same argument used 
in the proof of Theorem~\ref{thm:G_conv}
to deduce the identity \eqref{eq:int_cost_aux_2},
we obtain that
\begin{equation} \label{eq:int_cost_aux_3}
\int_\Theta\int_0^1 a(t,x_{\hat u_\infty}^\theta(t),
\theta)\, d\nu(t)d\mu(\theta) = 
\lim_{j\to\infty}
\int_\Theta\int_0^1 a(t,x_{\hat u_{N_j}}^\theta(t),
\theta)\, d\nu(t)d\mu_{N_j}(\theta).
\end{equation}
Combining \eqref{eq:conv_val_min} and 
\eqref{eq:int_cost_aux_3}, and recalling the 
definitions \eqref{eq:def_fun_N} and 
\eqref{eq:def_fun_ens}
of the functionals 
$\F^N:\X\to\R_+$ and $\F:\X\to\R_+$,
we have that
\begin{equation*}
\frac\be2 ||\hat u_\infty||_{L^2}^2 
= \lim_{j\to\infty} 
\frac\be2 ||\hat u_{N_j}||_{L^2}^2,
\end{equation*} 
which implies that 
$\hat u_{N_j}\to_{L^2}\hat u_\infty$
as $j\to\infty$. Since the argument holds for
every $L^2$-weakly convergent subsequence 
of the sequence of minimizers 
$(\hat u_N)_{N\in\NN}$, this concludes the proof.
\end{proof}

\begin{remark} \label{rem:G_conv_interpr}
There are two possible interpretations for Theorem~\ref{thm:G_conv} and Corollary~\ref{cor:conv_min}, depending if the probability measure $\mu$ that defines the limiting functional $\F$ is explicitly known or not. 
If it is, then the $\Gamma$-convergence result can be read as a theoretical guarantee to substitute an infinite-ensemble optimal control problem with a finite-ensemble one, as illustrated in the Introduction and at the beginning of this section.
On the other hand, in real-world problems, the underlying measure $\mu$ may be unknown, but we can collect observations $\{ \theta_1,\ldots,\theta_N \}$ of random variables distributed as $\mu$, and we consider the empirical probability measure $\mu_N=\frac1N \sum_{j=1}^N\delta_{\theta_i}$. In this framework, Theorem~\ref{thm:G_conv} and Corollary~\ref{cor:conv_min} can be interpreted as \textit{stability results} for the number of observations $N$. Indeed, from the fact that $\mu_N\weak^* \mu$ as $N\to\infty$, the $\Gamma$-convergence of the sequence $(\F^N)_{N\in\NN}$ implies that, when the number of collected observations is large enough, we should not expect dramatic changes in the solutions of the optimal control problems if we further increase the samplings. 
\end{remark}

\end{section}

\begin{section}{Gradient field
and Maximum Principle for the approximating
problems} \label{sec:grad_flow}

In the present section we address the question of actually finding the minimizers of the approximating functionals $(\F^N)_{N\in\NN}$ introduced in Section~\ref{sec:G_conv}. Namely, starting from the result stated in Theorem~\ref{thm:grad_field_aff} for a {\it single} affine-control system with end-point cost, we obtain the expression of the gradient fields that the functionals $(\F^N)_{N\in\NN}$ induce on their domain. Moreover, we state the Pontryagin Maximum Principle for the optimal control problems corresponding to the minimization of the functionals $(\F^N)_{N\in \NN}$. Both the gradient fields and the Maximum Principle will be used for the construction of the numerical algorithms presented in Section~\ref{sec:num_schemes}.

From now on, we specialize on the following particular form of the cost associated with the ensemble optimal control problem
\eqref{eq:opt_ctrl_ens}:
\begin{equation} \label{eq:fun_ens_endpoint}
\F(u) = \int_{\Theta} a(x_u^\theta(1),\theta)
\, d\mu(\theta) + \frac\beta2||u||_{L^2}^2
\end{equation} 
for every $u\in\U$,
where $a:\R^n\times\Theta\to\R_+$ is a $C^1$-regular function, and $\beta>0$ is a positive parameter that tunes the $L^2$-regularization.
We observe that \eqref{eq:fun_ens_endpoint} is a particular instance of \eqref{eq:def_fun_ens}. Indeed, it corresponds to the case $\nu= \delta_{t=1}$, where $\nu$ is the probability measure on the time interval $[0,1]$ that appears in the first term at the right-hand side of \eqref{eq:opt_ctrl_ens}.
In other words, we assume that the integral cost in \eqref{eq:opt_ctrl_ens} depends only on the final state of the trajectories of the ensemble. 
For every $N\in \NN$, let the probability measure $\mu_N$ have the same expression as in \eqref{eq:emp_meas}, i.e., it is a finite convex combination of Dirac deltas centered at $\{ \theta_1,\ldots,\theta_N \} \subset \Theta$. 
Therefore, for every $N\in\NN$, the functional $\F^N:\U\to\R_+$ that we consider in place of \eqref{eq:fun_ens_endpoint} has the the form
\begin{equation} \label{eq:fun_ens_N_endpoint}
\F^N(u)=
\int_\Theta a(x_u^\theta(1),\theta)\,d\mu_N(\theta)+
\frac\beta2 ||u||_{L^2}^2
= \sum_{j=1}^N \alpha_j
 a(x_u^{\theta_j}(1),\theta_j) +
 \frac{\beta}{2}||u||_{L^2}^2
\end{equation} 
for every $u\in\U$.

\begin{remark} \label{rmk:dom_def_1}
In Section~\ref{sec:G_conv} for technical reasons we defined the functionals $(\F^N)_{N\in\NN}$ on the domain $\X\subset\U$ introduced in \eqref{eq:def_restr_sp}. However, the functionals $(\F^N)_{N\in\NN}$ and the corresponding gradient fields can be defined over the whole space of admissible controls $\U$. 
\end{remark}

At this point, it is convenient to approach the minimization of the functional $\F^N$ in the framework of finite-dimensional optimal control problems in finite-dimensional Euclidean spaces.
For this purpose, we introduce some notations. For every $N\in\NN$, let $\{ \theta_1,\ldots,\theta_N \} \subset \Theta$ be the set of parameters charged by the discrete probability measure $\mu_N$.
Then, we study the finite sub-ensemble of \eqref{eq:ens_ctrl_sys} corresponding to the parameters $\{ \theta_1,\ldots,\theta_N \}$. Namely, we consider the following affine-control system on $\R^{nN}$:
\begin{equation} \label{eq:sys_ctrl_ens_discr}
\begin{cases}
\dot \xx_u(t) = \FF_0^{N}(\xx_u)
+ \FF^{N}(\xx_u)u(t), &
\mbox{for a.e. }t\in[0,1],\\
\xx_u(0) = \xx_0,
\end{cases}
\end{equation}
where $\xx = (x^1,\ldots,x^N)^T\in \R^{nN}$, and $\FF_0^N:\R^{nN}\to\R^{nN}$ and $\FF^N:\R^{nN}\to\R^{nN\times k}$ are applications defined as follows:
\begin{equation} \label{eq:def_bf_field_0}
\FF_0^N(\xx) := \left( \begin{matrix}
F_0^{\theta_1}(x^1)\\
\vdots\\
F_0^{\theta_N}(x^N)
\end{matrix}
\right)
\end{equation}
and
\begin{equation}\label{eq:def_bf_fields}
\FF^N(\xx) := \left( \begin{matrix}
F^{\theta_1}(x^1)\\
\vdots\\
F^{\theta_N}(x^N)
\end{matrix}
\right)=
\left( \begin{matrix}
F_1^{\theta_1}(x^1)&\ldots &F_k^{\theta_1}(x^1)\\
\vdots&&\vdots\\
F_1^{\theta_N}(x^N)&\ldots& F_k^{\theta_N}(x^N)
\end{matrix}
\right)
\end{equation}
for every $\xx\in\R^{nN}$. Finally, the initial value is set as $\xx_0 := (x_0(\theta_1),\ldots,x_0(\theta_N))$, where $x_0:\Theta\to\R^n$ is the mapping defined \eqref{eq:def_x0} that prescribes the initial data of the Cauchy problems of the ensemble \eqref{eq:ens_ctrl_sys}.
Moreover, we can introduce the function $\bfa^N:\R^{nN}\to\R_+$ defined as 
\begin{equation} \label{eq:def_bf_a}
\bfa^N(\xx)= \bfa^N((x^1,\ldots,x^N)):= 
\sum_{j=1}^N \alpha_j a(x^j,\theta_j),
\end{equation}
where $a:\R^n\times\Theta\to\R_+$ is the function that designs the integral cost in \eqref{eq:fun_ens_endpoint}, and for every $j=1,\ldots,N$ the coefficient $\alpha_j$ is the weight corresponding to $\delta_{\theta_j}$ in the convex combination \eqref{eq:emp_meas}.
In this framework, the functional $\F^N:\U\to\R_+$ can be rewritten as follows:
\begin{equation} \label{eq:fun_ens_N_rewr}
\F^N(u) = \bfa(\xx_u(1)) + 
\frac\beta2 ||u||^2_{L^2}
\end{equation} 
for every $u\in\U$, where $\xx_u^N:[0,1]\to\R^{nN}$
is the solution of \eqref{eq:sys_ctrl_ens_discr}
corresponding to the admissible control $u$.
In the next result we derive the expression of
the vector field $\G^N:\U\to\U$ that represents
the differential of the functional $\F^N$, i.e., that satisfies
\begin{equation} \label{eq:rep_diff_fun_ens}
\langle\G^N[u],v
\rangle_\U = d_u\F^N(v)
\end{equation}
for every $u,v\in \U$.

\begin{theorem} \label{thm:grad_field_N}
Let us assume that for every $\theta\in\Theta$
the functions $x\mapsto F_0(x,\theta)$
and $x\mapsto F(x,\theta)$ are $C^1$-regular,
as well as the function $x\mapsto a(x,\theta)$
that defines the end-point cost in 
\eqref{eq:fun_ens_endpoint}.
Let $\{ \theta_1,\ldots,\theta_N \}
\subset \Theta$ be the subset of parameters
charged by the measure $\mu^N$ that designs
the integral cost in \eqref{eq:fun_ens_N_endpoint}.
Let $\F^N:\U\to\R_+$ be the functional defined in
\eqref{eq:fun_ens_N_endpoint}. Then, $\F^N$
is Gateaux differentiable at every $u\in\U$,
and we define 
$\G^N:\U\to\U$ as the gradient vector field
on $\U$ that satisfies \eqref{eq:rep_diff_fun_ens}.
Then, for every $u\in\U$ we have 
\begin{equation} \label{eq:grad_field_N}
\G^N[u](t) = \sum_{j=1}^N \alpha_j F^{\theta_j}
(x_u^{\theta_j}(t))^T \cdot \lambda_u^{j\,T}(t)
+\beta u(t)
\end{equation}
for a.e. $t\in[0,1]$, where for every 
$j=1,\ldots, N$ the curve 
$x_u^{\theta_j}:[0,1]\to\R^n$ is the solution of
\eqref{eq:ens_ctrl_sys} corresponding to the
parameter $\theta_j$ and to the admissible control
$u$, and $\lambda_u^j:[0,1]\to(\R^n)^*$ is 
the absolutely continuous curve of covectors 
that solves
\begin{equation}\label{eq:adj_eq_N}
\begin{cases}
\dot \lambda_u^j(t) = -\lambda_u^j(t)
\left(\frac{\partial F_0^{\theta_j}(x_u^{\theta_j}(t))}{\partial x} + 
\sum_{i=1}^k u_i(t)
\frac{\partial F_i^{\theta_j}(x_u^{\theta_j}(t))}{\partial x}
\right) &\mbox{a.e. in } [0,1],\\
\lambda_u^j(1) = \nabla a(x_u^{\theta_j}(1),\theta_j).
\end{cases}
\end{equation}
\end{theorem}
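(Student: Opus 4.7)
The plan is to deduce Theorem~\ref{thm:grad_field_N} as a direct consequence of Theorem~\ref{thm:grad_field_aff} by lifting the finite ensemble to a single affine-control system on $\R^{nN}$. Indeed, the reformulation \eqref{eq:sys_ctrl_ens_discr}--\eqref{eq:fun_ens_N_rewr} already exhibits $\F^N$ as the end-point cost functional
\[
\F^N(u) = \bfa^N(\xx_u(1)) + \frac{\beta}{2}\|u\|_{L^2}^2
\]
of a single affine-control system driven by the block vector fields $\FF_0^N$ and $\FF^N$ defined in \eqref{eq:def_bf_field_0}--\eqref{eq:def_bf_fields}, with $C^1$-regular cost $\bfa^N$ given by \eqref{eq:def_bf_a}. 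The $C^1$-regularity of $\FF_0^N$, $\FF^N$ and $\bfa^N$ follows at once from the corresponding assumptions on $F_0(\cdot,\theta)$, $F(\cdot,\theta)$ and $a(\cdot,\theta)$ for $\theta\in\{\theta_1,\ldots,\theta_N\}$, so that the hypotheses of Theorem~\ref{thm:grad_field_aff} are satisfied.

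First I would invoke Theorem~\ref{thm:grad_field_aff} applied to \eqref{eq:sys_ctrl_ens_discr}: this yields the Gateaux differentiability of $\F^N:\U\to\R$ and the representation
\[
\G^N[u](t) = \FF^N(\xx_u(t))^T \boldsymbol{\lambda}_u(t)^T + \beta u(t),
\]
where $\boldsymbol{\lambda}_u:[0,1]\to(\R^{nN})^*$ solves the adjoint system with terminal condition $\boldsymbol{\lambda}_u(1)=\nabla\bfa^N(\xx_u(1))$. The second step is to exploit the block-diagonal structure of $\FF_0^N$ and $\FF^N$ to decouple this adjoint equation. Writing $\boldsymbol{\lambda}_u = (\Lambda^1,\ldots,\Lambda^N)$ with $\Lambda^j:[0,1]\to(\R^n)^*$, the block form of $\frac{\partial}{\partial x}\FF_i^N(\xx)$ shows that each $\Lambda^j$ satisfies a Cauchy problem involving only $x_u^{\theta_j}$ and the vector fields $F_0^{\theta_j},F_1^{\theta_j},\ldots,F_k^{\theta_j}$, while the expression \eqref{eq:def_bf_a} gives
\[
\Lambda^j(1) = \alpha_j\,\nabla_x a(x_u^{\theta_j}(1),\theta_j).
\]
By linearity of the adjoint equation in $\Lambda^j$, setting $\lambda_u^j := \Lambda^j/\alpha_j$ one recovers exactly \eqref{eq:adj_eq_N}.

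Finally, I would reassemble the block product. The horizontal block decomposition $\FF^N(\xx)^T = (F^{\theta_1}(x^1)^T,\ldots,F^{\theta_N}(x^N)^T)$ combined with the corresponding vertical decomposition of $\boldsymbol{\lambda}_u^T$ gives
\[
\FF^N(\xx_u(t))^T \boldsymbol{\lambda}_u(t)^T
= \sum_{j=1}^N F^{\theta_j}(x_u^{\theta_j}(t))^T\,\Lambda^j(t)^T
= \sum_{j=1}^N \alpha_j F^{\theta_j}(x_u^{\theta_j}(t))^T\,\lambda_u^{j\,T}(t),
\]
which, added to $\beta u(t)$, yields \eqref{eq:grad_field_N}. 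The main difficulty is not mathematical but bookkeeping: one has to keep track of the convention that covectors are row vectors (see Remark~\ref{rmk:row_vec_1}), verify that the block structure of $\FF_0^N,\FF^N$ truly decouples the adjoint system, and choose consistently whether the weights $\alpha_j$ are absorbed into the terminal datum of $\Lambda^j$ or factored out into the final expression of $\G^N[u]$. Everything else is an immediate application of Theorem~\ref{thm:grad_field_aff}.
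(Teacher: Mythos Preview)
Your proposal is correct and follows essentially the same approach as the paper: lift the finite ensemble to the single affine-control system \eqref{eq:sys_ctrl_ens_discr} on $\R^{nN}$, apply Theorem~\ref{thm:grad_field_aff}, then exploit the block-diagonal structure to decouple the adjoint into $\mathbf{\Lambda}_u=(\alpha_1\lambda_u^1,\ldots,\alpha_N\lambda_u^N)$ and reassemble the gradient. Your handling of the weights (absorbing $\alpha_j$ into the terminal datum of $\Lambda^j$ and then factoring it out via $\lambda_u^j:=\Lambda^j/\alpha_j$) matches the paper's exactly.
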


\begin{remark}\label{rmk:row_vect}
We use the convention that the elements of $(\R^n)^*$
are row-vectors. Therefore, for every
$j=1,\ldots,N$ and $t\in[0,1]$, $\lambda_u^j(t)$
should be read as a row-vector. 
This should be considered to give sense to 
\eqref{eq:grad_field_N} and \eqref{eq:adj_eq_N}.
The same observation holds for 
Theorem~\ref{thm:PMP_N}.
\end{remark}

\begin{proof}[Proof of Theorem~\ref{thm:grad_field_N}]
As done in \eqref{eq:sys_ctrl_ens_discr},
we can equivalently rewrite the sub-ensemble 
of control systems corresponding to 
the parameters $\{ \theta_1,\ldots,\theta_N \}
\subset \Theta$ as a {\it single} affine-control
system in $\R^{nN}$.
Moreover, the regularity hypotheses guarantee
that the functions
$\FF_0^N:\R^{nN}\to\R^{nN}$ and 
$\FF^N:\R^{nN}\to\R^{nN\times k}$ defined in 
\eqref{eq:def_bf_fields} are $C^1$-regular,
as well as the function $\bfa:\R^{nN}\to\R_+$
introduced in \eqref{eq:def_bf_a}.
Therefore, owing to Theorem~\ref{thm:grad_field_aff},
we obtain the expression for the
gradient field induced by the functional
$\F^N$ written in \eqref{eq:fun_ens_N_rewr}. 
Indeed, we deduce that
\begin{equation} \label{eq:grad_field_N_aux}
\G^N[u] = \FF^N(\xx_u(t))^T
\mathbf{\Lambda}_u(t) + \beta u
\end{equation}
for every $u\in\U$, where 
$\xx_u:[0,1]\to \R^{nN}$ is the solution of 
\eqref{eq:sys_ctrl_ens_discr} corresponding to
the control $u$, and $\mathbf{\Lambda}_u:[0,1]\to
(\R^{nN})^*$ is the curve of covectors that solves
\begin{equation} \label{eq:adj_eq_N_aux}
\begin{cases}
\dot{\mathbf{\Lambda}}_u(t) = -{\mathbf{\Lambda}}_u(t)
\left(
\frac{\partial \FF_0^N(\xx_u(t))}{\partial \xx} + \sum_{i=1}^k u_i(t)
\frac{\partial \FF_i^N(\xx_u(t))}{\partial \xx}
\right) &\mbox{for a.e. }t\in[0,1],\\
{\mathbf{\Lambda}}_u(1) = \nabla_{\xx} 
\bfa(\xx_u(1)),
\end{cases}
\end{equation}
where $\FF_1^N,\ldots,\FF_k^N:\R^{nN}\to\R^{nN}$
denote the vector fields obtained by taking the 
columns of the matrix-valued application
$\FF^N:\R^{nN}\to\R^{nN\times k}$.
Moreover, if we consider the curves of covectors
$\lambda_u^1,\ldots,\lambda_u^N:[0,1]\to(\R^n)^*$
that solve \eqref{eq:adj_eq_N} for  
$j=1,\ldots,N$, it turns out that 
the solution of \eqref{eq:adj_eq_N_aux}
can be written as 
$\mathbf{\Lambda}_u(t)=(\alpha_1 \lambda_u^1(t),\ldots,
\alpha_N\lambda_u^N(t))$ for every 
$t\in[0,1]$, where $\alpha_1,\ldots,\alpha_N$
are the coefficients of convex combination
involved in the definition of $\mu_N$ 
\eqref{eq:emp_meas}.
Finally, owing to this decoupling of
$\mathbf{\Lambda}_u$, the identity 
\eqref{eq:adj_eq_N} can be deduced from 
\eqref{eq:grad_field_N_aux}
using the expression of $\FF_0^N,\ldots,\FF_k^N$. 
\end{proof}

In the previous result we obtained the
Riesz's representation of the differential
of the functional $\F^N:\U\to\R_+$. 
We now establish the necessary condition for 
an admissible control $\hat u_N\in\U$ to be
a minimizer of $\F^N$. This essentially descends 
as a standard application of Pontryagin Maximum 
Principle. For a complete survey on the topic, the
reader is referred to the textbook \cite{AS}.

\begin{theorem}\label{thm:PMP_N}
Under the same assumptions and notations of 
Theorem~\ref{thm:grad_field_N}, let $\hat u_N
=(\hat u_{N,1},\ldots, \hat u_{N,k})\in \U$
be a local minimizer of the functional $\F^N:\U\to\R_+$
defined as in \eqref{eq:fun_ens_N_endpoint}.
For every $j=1,\ldots,N$, let 
$x_{\hat u_N}^{\theta_j}:[0,1]\to\R^n$  be the 
solution of \eqref{eq:ens_ctrl_sys} corresponding
to the parameter $\theta_j\in\Theta$ and to the
optimal control $\hat u_N$. Then,
for every $j=1,\ldots,N$ there 
exists a curve of covectors $\lambda_{\hat u_N}^j
:[0,1]\to(\R^n)^*$ such that
\begin{equation}\label{eq:adj_eq_PMP_N}
\begin{cases}
\dot \lambda_{\hat u_N}^j(t) = -\lambda_{\hat u_N}^j(t)
\left(\frac{\partial F_0^{\theta_j}(x_{\hat u_N}^{\theta_j}(t))}{\partial x} + 
\sum_{i=1}^k \hat u_{N,i} (t)
\frac{\partial F_i^{\theta_j}(x_{\hat u_N}^{\theta_j}(t))}{\partial x}
\right) &\mbox{a.e. in } [0,1],\\
\lambda_{\hat u_N}^j(1) = \nabla a(x_{\hat u_N}^{\theta_j}(1),\theta_j),
\end{cases}
\end{equation}
and such that 
\begin{equation}\label{eq:max_cond}
\hat u_N(t) \in 
\arg \max_{\!\!\!\!\!\!
\!\!\!\!\!\! v\in \R^k}
\left\{
\sum_{j=1}^N \alpha_j
\left( 
-\lambda_{\hat u_N}^j(t) \cdot
F^{\theta_j}(x_{\hat u_N}^{\theta_j}(t))\cdot v \right)
- \frac{\beta}{2}|v|^2_2
\right\}
\end{equation}
for a.e. $t\in[0,1]$.
\end{theorem}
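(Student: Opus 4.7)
The plan is to derive the necessary condition directly from Theorem~\ref{thm:grad_field_N}, exploiting the fact that a local minimizer of a Gateaux-differentiable unconstrained functional must have vanishing differential. Since $\hat u_N\in\U$ is a local minimizer of $\F^N$ and $\F^N$ is Gateaux differentiable everywhere on $\U$ by Theorem~\ref{thm:grad_field_N}, the real-valued function $s\mapsto \F^N(\hat u_N + sv)$ has a local minimum at $s=0$ for every direction $v\in\U$. Hence $d_{\hat u_N}\F^N(v) = 0$ for all $v\in\U$, and the Riesz representation \eqref{eq:rep_diff_fun_ens} gives equivalently the identity $\G^N[\hat u_N] = 0$ in $\U$.

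For each $j=1,\ldots,N$, I would then \emph{define} the adjoint curve $\lambda_{\hat u_N}^j:[0,1]\to(\R^n)^*$ as the unique solution of the terminal-value Cauchy problem \eqref{eq:adj_eq_PMP_N}; this is simply \eqref{eq:adj_eq_N} evaluated at $u=\hat u_N$, and existence/uniqueness follow from the Carathéodory Theorem as recalled after \eqref{eq:def_lambda_theta_u}. Using the explicit formula \eqref{eq:grad_field_N} for the gradient field, the vanishing condition $\G^N[\hat u_N](t)=0$ rewrites as
\[
\beta\,\hat u_N(t) = -\sum_{j=1}^N \alpha_j\, F^{\theta_j}\!\bigl(x_{\hat u_N}^{\theta_j}(t)\bigr)^T \lambda_{\hat u_N}^{j\,T}(t) \qquad \text{for a.e.\ } t\in[0,1].
\]

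To obtain the maximum condition \eqref{eq:max_cond}, I would reinterpret this pointwise identity variationally. Fix $t$ in the full-measure set where the previous equality holds, and consider the function $\Phi_t:\R^k\to\R$ defined by the expression inside the braces of \eqref{eq:max_cond}. Since $\Phi_t$ is a strictly concave quadratic form in $v$ (its Hessian equals $-\beta I_k$ with $\beta>0$), it admits a unique global maximizer, characterized by $\nabla_v\Phi_t(v)=0$. A direct computation gives
\[
\nabla_v \Phi_t(v) = -\sum_{j=1}^N \alpha_j\, F^{\theta_j}\!\bigl(x_{\hat u_N}^{\theta_j}(t)\bigr)^T \lambda_{\hat u_N}^{j\,T}(t) - \beta v,
\]
whose unique zero coincides with $\hat u_N(t)$, in view of the displayed equality above. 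This yields $\hat u_N(t)\in\arg\max_v\Phi_t(v)$ for a.e. $t\in[0,1]$, completing the proof.

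I do not foresee a substantial obstacle. Because the controls are unconstrained (i.e., $\U=L^2([0,1],\R^k)$ with no pointwise bound on values), the typical difficulty in deriving the PMP, namely handling maximization over a constrained set through needle variations, reduces here to a first-order calculus exercise on a strictly concave quadratic; the entire content of Theorem~\ref{thm:PMP_N} is essentially the fact that the $L^2$-gradient $\G^N[\hat u_N]$ vanishes at a local minimizer, followed by the recognition of the resulting identity as the critical-point equation of $\Phi_t$.
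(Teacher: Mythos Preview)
Your proof is correct, but it takes a different route from the paper's own argument. The paper proves Theorem~\ref{thm:PMP_N} by first aggregating the finite sub-ensemble into a single affine-control system on $\R^{nN}$ (as in \eqref{eq:sys_ctrl_ens_discr}--\eqref{eq:def_bf_a}), then invoking the classical Pontryagin Maximum Principle as a black box, ruling out abnormal extremals via the linearity of the adjoint equation, and finally decoupling the big covector $\mathbf\Lambda_{\hat u_N}$ into the $\alpha_j$-weighted components $\lambda_{\hat u_N}^j$. By contrast, you bypass the PMP machinery entirely: you use only the Gateaux differentiability and the explicit gradient formula already obtained in Theorem~\ref{thm:grad_field_N}, observe that the gradient vanishes at a local minimizer of an unconstrained functional, and then recognise the resulting pointwise identity as the first-order condition for the strictly concave quadratic $\Phi_t$. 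This is exactly the observation the paper records \emph{after} the proof, in Remark~\ref{rmk:grad_PMP}, where it is noted that critical points of $\F^N$ coincide with normal Pontryagin extremals. Your approach is more elementary and self-contained (it needs no external PMP statement), while the paper's approach has the advantage of situating the result within the standard optimal-control framework and making the absence of abnormal extremals explicit.
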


\begin{proof}
As done in the proof of 
Theorem~\ref{thm:grad_field_N}, we observe that
we can equivalently
consider the {\it single} affine-control
system \eqref{eq:sys_ctrl_ens_discr} 
in place of the sub-ensemble of affine-control
systems corresponding to the parameters
$\{ \theta_1,\ldots,\theta_N \}\subset \Theta$.
Moreover, if we rewrite the cost functional
$\F^N:\U\to\R_+$ as in \eqref{eq:fun_ens_N_rewr},
we reduce to a standard optimal control
problem in $\R^{nN}$. Let $\hat u_N\in\U$  be an
optimal control for this problem, and
let $\xx_{\hat u_N}:[0,1]\to\R^{nN}$ be the 
solution of \eqref{eq:sys_ctrl_ens_discr} 
corresponding to $\hat u_N$. Then, from the 
Pontryagin Maximum Principle (see, e.g.,
\cite[Chapter~12]{AS}), there
exists $\epsilon\in\{0,-1 \}$ and 
$\mathbf{\Lambda}_{\hat u_N}:[0,1]\to(\R^{nN})^*$
such that $(\epsilon,\mathbf{\Lambda}_{\hat u_N}(t))
\neq 0$ for every $t\in[0,1]$ and such that
\begin{equation} \label{eq:adj_eq_PMP_N_aux}
\begin{cases}
\dot{\mathbf{\Lambda}}_{\hat u_N}(t) = -{\mathbf{\Lambda}}_{\hat u_N}(t)
\left(
\frac{\partial \FF_0^N(\xx_{\hat u_N}(t))}{\partial \xx} + \sum_{i=1}^k {\hat u_{N,i}}(t)
\frac{\partial \FF_i^N(\xx_{\hat u_N}(t))}{\partial \xx}
\right) &\mbox{a.e. in } [0,1],\\
{\mathbf{\Lambda}}_{\hat u_N}(1) = 
\epsilon \nabla_{\xx} 
\bfa(\xx_{\hat u_N}(1)).
\end{cases}
\end{equation}
Moreover, for a.e. $t\in[0,1]$ 
the following condition holds
\begin{equation}\label{eq:max_cond_aux}
\hat u_N(t) \in 
\arg \max_{\!\!\!\!\!\!
\!\!\!\!\!\! v\in \R^k}
\left\{
\mathbf{\Lambda}_{\hat u_N}(t)
\big(
\FF_0^N(\xx_{\hat u_N}(t))
+ \FF^N(\xx_{\hat u_N}(t))v
\big)
+ \epsilon\frac{\beta}{2}|v|^2_2
\right\}.
\end{equation}
Since the differential equation 
\eqref{eq:adj_eq_PMP_N_aux} is linear, 
if $\epsilon=0$ we have 
$\mathbf{\Lambda}_{\hat u_N}(t)\equiv 0$,
and this violates the 
condition $(\epsilon,\mathbf{\Lambda}_{\hat u_N}(t))
\neq 0$ for every $t\in[0,1]$. Therefore
we deduce that $\epsilon=-1$.
This shows that
the optimal control problem in consideration has
no abnormal extremals.
Moreover, if we consider the curves of covectors
$\lambda_{\hat u_N}^1,
\ldots,\lambda_{\hat u_N}^N:[0,1]\to(\R^n)^*$
that solve \eqref{eq:adj_eq_PMP_N} for  
$j=1,\ldots,N$, it turns out that 
the solution of \eqref{eq:adj_eq_PMP_N_aux}
corresponding to $\alpha =-1$
can be written as 
$\mathbf{\Lambda}_u(t)=(-\alpha_1\lambda_{\hat u_N}^1(t),
\ldots,-\alpha_N\lambda_{\hat u_N}^N(t))$ for every 
$t\in[0,1]$, where $\alpha_1,\ldots,\alpha_N$
are the coefficients of convex combination
involved in the definition of $\mu_N$ 
\eqref{eq:emp_meas}.
Finally, owing to this decoupling of
$\mathbf{\Lambda}_u$, the condition
\eqref{eq:max_cond} can be deduced from 
\eqref{eq:max_cond_aux}
using the expression of $\FF_0^N,\ldots,\FF_k^N$,
and observing that the term
$\mathbf{\Lambda}_{\hat u_N}(t)
\FF_0^N(\xx_{\hat u_N}(t))$ in 
\eqref{eq:max_cond_aux} does not affect the
minimizer.
\end{proof}

\begin{remark} \label{rmk:PMP_formulation}
We can equivalently reformulate the Maximum condition
\eqref{eq:max_cond} of Theorem~\ref{thm:PMP_N}
as follows:
\begin{equation}\label{eq:max_cond_Int}
\hat u_N(t) \in 
\arg \max_{\!\!\!\!\!\!
\!\!\!\!\!\! v\in \R^k}
\left\{
\int_\Theta -\Lambda_{\hat u_N}(t,\theta)\cdot
F(X_{\hat u_N}(t,\theta),\theta) \cdot
v \, d\mu_N(\theta)
- \frac\beta2 |v|_2^2 
\right\},
\end{equation}
where 
$\Lambda_{\hat u_N}:[0,1]\times\Theta\to (\R^n)^*$
and 
$X_{\hat u_N}:[0,1]\times\Theta\to \R^n$
are the applications defined, respectively, in
\eqref{eq:def_Lambda} and \eqref{eq:def_evol_ens},
and corresponding to the control $\hat u_N$.

\end{remark}

We recall that the Pontryagin Maximum 
Principle provides {\it necessary} condition
for minimality. An admissible control 
$\bar u \in \U$ is a (normal) {\it Pontryagin
extremal} for the optimal control problem 
related to the minimization of $\F^N:\U\to\R_+$
if there exist $\lambda_{\bar u}^1,\ldots,
\lambda_{\bar u}^N:[0,1]\to(\R^n)^*$
satisfying \eqref{eq:adj_eq_PMP_N} and such that
the relation \eqref{eq:max_cond} holds.

\begin{remark}\label{rmk:grad_PMP}
Let $\bar u\in \U$ be a critical point for
the functional $\F^N:\U\to\R_+$, i.e., 
$\G^N[\bar u]=0$. Therefore, from
\eqref{eq:grad_field_N} it turns out that
\begin{equation*} %\label{eq:expr_crit_F_N}
\bar u(t) = -\frac1\beta \sum_{j=1}^N
F^{\theta_j}(x_{\bar u}^{\theta_j}(t))^T\cdot 
\lambda_{\bar u}^j(t)^T
\end{equation*}
for a.e. $t\in[0,1]$, where for every $j=1,\ldots,N$
the curve
$x_{\bar u}^{\theta_j}:[0,1]\to\R^n$ is 
the trajectory of \eqref{eq:ens_ctrl_sys}
corresponding to the parameter $\theta_j$ and
to the control $\bar u$, and 
$\lambda_{\bar u}^j:[0,1]\to (\R^n)^*$
is the solution of \eqref{eq:adj_eq_N}.
We observe that, for every $j=1,\ldots,N$,
$\lambda_{\bar u}^j:[0,1]\to (\R^n)^*$ solves
as well \eqref{eq:adj_eq_PMP_N}, and that
$\bar u(t)$ satisfies 
\begin{equation*}
\bar u^N(t) \in 
\arg \max_{\!\!\!\!\!\!
\!\!\!\!\!\! v\in \R^k}
\left\{
\sum_{j=1}^N \alpha_j
\left(  -\lambda_{\bar u^N}^j(t)\cdot 
F^{\theta_j}(x_{\bar u^N}^{\theta_j}(t))\cdot v \right)
- \frac{\beta}{2}|v|^2_2
\right\}
\end{equation*}
for a.e. $t\in[0,1]$. This shows that any critical
point of $\F^N:\U\to\R_+$ is a (normal) Pontryagin
extremal for the corresponding optimal control
problem. Conversely, an analogue argument shows
that any Pontryagin extremal is a critical point
for the functional $\F^N$. 
\end{remark}

\end{section}

\begin{section}{Maximum Principle for ensemble optimal control problems} \label{sec:PMP_ens}
In the present section we use a $\Gamma$-convergence argument to recover necessary optimality conditions for (local) minimizers of the functional $\F$ defined in \eqref{eq:fun_ens_endpoint}.
The result that we prove here is in the same flavor as the Maximum Principle derived in \cite{BK19}, even though the tools employed are rather different.

Let $\bar u\in \U$ be a local minimizer for the functional $\F$. Then, for every $\e>0$, we define the following perturbed functional $\F^\e:\U\to\R_+$:
\begin{equation}\label{eq:def_F_eps}
\F^\e(u) := \int_{\Theta} a(x_u^\theta(1),\theta)
\, d\mu(\theta) + \frac\beta2||u||_{L^2}^2
+ \frac\e2 ||u-\bar u||_{L^2}^2.
\end{equation}
We immediately observe that the following property holds.

\begin{lemma} \label{lem:uniq_loc_min}
Let us consider the functional $\F:\U\to \R_+$
introduced in \eqref{eq:fun_ens_endpoint} and let
$\bar u\in \U$ be one of its local minimizers.
Let $\F^\e:\U\to\R_+$ be defined as in \eqref{eq:def_F_eps}. Then, there exists $\rho_{\bar u}>0$
such that  
\begin{equation*}
\F^\e(\bar u) < \F^\e( u)\quad   \forall u\neq \bar u \mbox{ with } ||u- \bar u||_{L^2}\leq \rho_{\bar u}
\end{equation*}
for every $\e>0$.
\end{lemma}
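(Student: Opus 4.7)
The plan is to observe that the perturbation $\frac{\e}{2}\|u-\bar u\|_{L^2}^2$ in the definition of $\F^\e$ vanishes exactly at $u=\bar u$ and is strictly positive elsewhere, so it automatically upgrades a local minimizer of $\F$ to a strict local minimizer of $\F^\e$.

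More concretely, first I would invoke the hypothesis that $\bar u$ is a local minimizer of $\F:\U\to\R_+$: by definition this yields a radius $\rho_{\bar u}>0$ such that
\[
\F(\bar u) \leq \F(u) \qquad \text{for every } u\in\U \text{ with } \|u-\bar u\|_{L^2}\leq \rho_{\bar u}.
\]
Next, I would simply evaluate $\F^\e$ at $\bar u$, noting that the penalty term is zero, hence $\F^\e(\bar u)=\F(\bar u)$. Then for any $u\neq \bar u$ with $\|u-\bar u\|_{L^2}\leq \rho_{\bar u}$, the definition \eqref{eq:def_F_eps} gives
\[
\F^\e(u) = \F(u) + \frac{\e}{2}\|u-\bar u\|_{L^2}^2 \geq \F(\bar u) + \frac{\e}{2}\|u-\bar u\|_{L^2}^2 > \F(\bar u) = \F^\e(\bar u),
\]
where the first inequality uses the local minimality of $\bar u$ for $\F$, and the strict inequality uses $\e>0$ together with $\|u-\bar u\|_{L^2}>0$ (since $u\neq \bar u$). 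Since $\rho_{\bar u}$ does not depend on $\e$, this proves the claim uniformly in $\e>0$.

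There is essentially no obstacle here; the statement is a soft but useful observation whose point is that introducing the extra quadratic penalty guarantees a strict (and, on the closed ball of radius $\rho_{\bar u}$, uniquely attained) minimum at $\bar u$ — a property which will presumably be exploited in the subsequent $\Gamma$-convergence argument to identify $\bar u$ as the unique limit of minimizers of appropriate approximating functionals.
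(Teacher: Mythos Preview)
Your proof is correct and follows essentially the same approach as the paper's own proof: both invoke the local minimality of $\bar u$ for $\F$ to obtain $\rho_{\bar u}$, note that $\F^\e(\bar u)=\F(\bar u)$, and then use the strict positivity of the quadratic penalty for $u\neq\bar u$ to conclude. Your version is simply more explicit in writing out the chain of inequalities.
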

\begin{proof}
Since $\bar u$ is a local minimizer for $\F$, there
exists $\rho_{\bar u}>0$ such that $\F(\bar u) \leq \F(u)$ for
every $u$ satisfying $||u-\bar u||_{L^2}\leq \rho_{\bar u}$.
From the definition of $\F^\e$ in \eqref{eq:def_F_eps}
and observing that $\F^\e(\bar u) = \F(\bar u)$, 
we deduce the thesis.
\end{proof}

For every local minimizer $\bar u\in \U$ of the 
functional $\F$, we set
\begin{equation} \label{eq:def_dom_restr_u_bar}
\X_{\bar u}:= \{ x\in \U\mid 
||u-\bar u||_{L^2}\leq \rho_{\bar u} \}.
\end{equation}
Given a sequence of discrete probability measures 
$(\mu_N)_{N\geq 1}$ as in \eqref{eq:emp_meas} such that
$\mu_N\weak^* \mu$ as $N\to \infty$, for every $\e>0$
and for every $N\geq1$ we introduce the functional
$\F^{N,\e}:\X_{\bar u}\to\R_+$ as follows:
\begin{equation}\label{eq:def_F_N_eps}
\F^{N,\e}(u):=
\int_{\Theta} a(x_u^\theta(1),\theta)
\, d\mu_N(\theta) + \frac\beta2||u||_{L^2}^2
+ \frac\e2 ||u-\bar u||_{L^2}^2.
\end{equation}
Similar to Section~\ref{sec:G_conv},
we can establish a $\Gamma$-convergence result.

\begin{proposition}\label{prop:G_conv_F_eps}
Let $\bar u \in \U$ be a local minimizer of the 
functional $\F:\U\to\R_+$ introduced in 
\eqref{eq:fun_ens_endpoint}, and let
 $\X_{\bar u}\subset \U$ be the set defined in
\eqref{eq:def_restr_sp}, equipped with the
weak topology of $L^2$. For every $N\geq1$ and for every $\e>0$, let
$\F^{N,\e}:\X_{\bar u}\to\R_+$
 be the functional presented
in \eqref{eq:def_F_N_eps}, and let 
$\F^\e:\X_{\bar u}\to\R_+$ be
the restriction to $\X_{\bar u}$
 of the application defined
in \eqref{eq:def_F_eps}. Then, we have that
$\F^{N,\e}\to_\Gamma \F^\e$ as $N\to\infty$.
Moreover, if for every $N\geq1$ we consider 
$\hat u_{N,\e} \in \mathrm{argmin} \F^{N,\e}$,
we obtain that
\begin{equation} \label{eq:strong_conv_u_N_eps}
\lim_{N\to\infty} ||\hat u_{N,\e} -\bar u||_{L^2} =0.
\end{equation}
\end{proposition}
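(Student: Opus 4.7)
The plan is to follow closely the strategy used for Theorem~\ref{thm:G_conv} and Corollary~\ref{cor:conv_min}, observing that the extra penalty $\tfrac{\e}{2}\|u-\bar u\|_{L^2}^2$ is a convex continuous function of $u\in\U$, hence weakly lower semicontinuous, and it is trivially continuous along constant sequences. Thus it contributes harmlessly to both the \emph{liminf} and \emph{limsup} parts of the $\Gamma$-convergence, and the whole argument in Section~\ref{sec:G_conv} carries over almost verbatim. Note that $\X_{\bar u}$ is a closed ball in $L^2$ centered at $\bar u$, so the weak topology is metrizable on it and Definition~\ref{defn:G_conv} applies.

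For the \emph{liminf inequality}, given $(u_N)_{N\ge 1}\subset\X_{\bar u}$ with $u_N\weak_{L^2}u_\infty$, I would combine three facts: Lemma~\ref{lem:conv_int_term} and Lemma~\ref{lem:conv_prod_meas} give the convergence of $\int_\Theta a(x_{u_N}^\theta(1),\theta)\,d\mu_N(\theta)$ to $\int_\Theta a(x_{u_\infty}^\theta(1),\theta)\,d\mu(\theta)$, exactly as in \eqref{eq:int_cost_aux_2}; the map $u\mapsto\tfrac{\be}{2}\|u\|_{L^2}^2$ is weakly lower semicontinuous by \eqref{eq:norm_weak_semicont}; and $u\mapsto\tfrac{\e}{2}\|u-\bar u\|_{L^2}^2$ is weakly lower semicontinuous as well. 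Summing, I obtain $\F^\e(u_\infty)\le\liminf_{N\to\infty}\F^{N,\e}(u_N)$. For the \emph{limsup inequality}, the constant recovery sequence $u_N\equiv u\in\X_{\bar u}$ works: the integral term converges by Lemma~\ref{lem:conv_prod_meas}, while the other two summands are constant in $N$.

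Existence of $\hat u_{N,\e}\in\mathrm{argmin}_{\X_{\bar u}}\F^{N,\e}$ follows from the direct method exactly as in Theorem~\ref{thm:exist_minim} and Remark~\ref{rmk:exist_min_F_N}, since $\X_{\bar u}$ is weakly compact and $\F^{N,\e}$ is weakly lower semicontinuous by the liminf argument just discussed. Since $(\hat u_{N,\e})_{N\in\NN}\subset\X_{\bar u}$ is automatically equi-coercive on $\X_{\bar u}$, the $\Gamma$-convergence together with \cite[Corollary~7.20]{D93} implies that the sequence is weakly pre-compact and any of its weak limit points is a minimizer of $\F^\e$ restricted to $\X_{\bar u}$. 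By Lemma~\ref{lem:uniq_loc_min}, the unique minimizer of $\F^\e$ on $\X_{\bar u}$ is $\bar u$ itself. Hence the whole sequence satisfies $\hat u_{N,\e}\weak_{L^2}\bar u$ as $N\to\infty$, and in particular $\F^{N,\e}(\hat u_{N,\e})\to\F^\e(\bar u)$.

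The last step, which I expect to be the main (but mild) obstacle, is upgrading weak to strong convergence. I would reproduce the trick used at the end of Corollary~\ref{cor:conv_min}. Applying the analogue of \eqref{eq:int_cost_aux_3} in the perturbed setting shows that the integral terms converge, leaving
$$\tfrac{\be}{2}\|\hat u_{N,\e}\|_{L^2}^2+\tfrac{\e}{2}\|\hat u_{N,\e}-\bar u\|_{L^2}^2\longrightarrow\tfrac{\be}{2}\|\bar u\|_{L^2}^2.$$
Expanding $\|\hat u_{N,\e}-\bar u\|_{L^2}^2=\|\hat u_{N,\e}\|_{L^2}^2-2\langle\hat u_{N,\e},\bar u\rangle_{L^2}+\|\bar u\|_{L^2}^2$ and exploiting the weak convergence $\hat u_{N,\e}\weak_{L^2}\bar u$ (which forces $\langle\hat u_{N,\e},\bar u\rangle_{L^2}\to\|\bar u\|_{L^2}^2$), the remaining identity can be solved for $\lim_{N\to\infty}\|\hat u_{N,\e}\|_{L^2}^2=\|\bar u\|_{L^2}^2$. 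In the Hilbert space $\U$, weak convergence together with convergence of norms yields strong convergence, which is exactly \eqref{eq:strong_conv_u_N_eps}. The only subtlety relative to Corollary~\ref{cor:conv_min} is the presence of two squared-norm terms with different weights, but since the cross term $\langle\hat u_{N,\e},\bar u\rangle_{L^2}$ passes to the limit for free by weak convergence, the algebra closes cleanly.
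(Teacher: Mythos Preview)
Your proposal is correct and follows essentially the same route as the paper: the $\Gamma$-convergence is obtained by repeating the liminf/limsup arguments of Theorem~\ref{thm:G_conv} with the weakly lower semicontinuous penalty $\tfrac{\e}{2}\|u-\bar u\|_{L^2}^2$, and the convergence of minimizers is deduced from \cite[Corollary~7.20]{D93} together with the uniqueness Lemma~\ref{lem:uniq_loc_min}. The only cosmetic difference is in the final step upgrading weak to strong convergence: the paper simply combines the limit identity with the weak lower semicontinuity $\liminf_N\|\hat u_{N,\e}\|_{L^2}^2\ge\|\bar u\|_{L^2}^2$ to force $\tfrac{\e}{2}\|\hat u_{N,\e}-\bar u\|_{L^2}^2\to 0$ directly, whereas you expand the square and use convergence of the inner product---both arguments are equivalent.
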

\begin{proof}
The fact that 
$\F^{N,\e}\to_\Gamma \F^\e$ as $N\to\infty$ follows
from a \textit{verbatim} repetition of the arguments
of the proof of Theorem~\ref{thm:G_conv}.
In addition, \cite[Corollary~7.20]{D93} guarantees that
\begin{equation} \label{eq:conv_min_F_eps}
\lim_{N\to\infty} \F^{N,\e}(\hat u_{N,\e}) =
\inf_{\X_{\bar u}}\F^\e =\F^\e(\bar u),
\end{equation}
and that any of the weak-limiting points of the 
sequence $(\hat u_{N,\e})\subset \X_{\bar u}$
is itself a minimizer of the restriction of $\F^\e$
to $\X_{\bar u}$. However, owing to 
Lemma~\ref{lem:uniq_loc_min}, we know that
 $\bar u$ is the 
unique minimizer of the restriction of $\F^\e$
to $\X_{\bar u}$. Therefore, we deduce that
$\hat u_{N,\e}\weak_{L^2} \bar u$ as $N\to\infty$.
We are left to show that the latter convergence 
holds also with respect to the strong topology of
$L^2$.
Using a similar reasoning as in the proof of
Corollary~\ref{cor:conv_min}, 
from \eqref{eq:conv_min_F_eps}
we obtain the identity
\begin{equation*}
\frac\beta2 ||\bar u||_{L^2}^2 = \lim_{N\to \infty}
\left( \frac\beta2||\hat u_{N,\e}||_{L^2}^2
+ \frac\e2 ||\hat u_{N,\e} - \bar u||_{L^2}^2 \right).
\end{equation*}
Finally, recalling the weak semi-continuity 
of the $L^2$-norm \eqref{eq:norm_weak_semicont}, the 
previous expression yields
\eqref{eq:strong_conv_u_N_eps}.
\end{proof}

We are now in position to prove the Maximum Principle
for the local minimizers of the ensemble optimal
control problem related to the functional
$\F:\U\to\R_+$.

\begin{theorem} \label{thm:ens_PMP}
Let us assume that the mappings 
$(x,\theta)\mapsto 
\frac{\partial}{\partial x}F_i(x,\theta)$ 
are continuous for every $i=0,\ldots,k$, as well as
the gradient $(x,\theta)\mapsto \nabla_x a(x,\theta)$.
Let $\bar u \in \U$ be a local minimizer of the 
functional $\F:\U\to\R_+$ introduced in 
\eqref{eq:fun_ens_endpoint}.
Let $X_{\bar u}:[0,1]\times \Theta \to \R^n$ be the 
mapping defined in \eqref{eq:def_evol_ens} that
collects the trajectories of the ensemble corresponding
to the control $\bar u$, and let us consider 
the application 
$\Lambda_{\bar u}:[0,1]\times\Theta\to
(\R^n)^*$ introduced in \eqref{eq:def_Lambda} that satisfies
\begin{equation} \label{eq:Lambda_PMP}
\begin{cases}
 \partial_t \Lambda_{\bar u}(t,\theta) = - \Lambda_{\bar u}(t,\theta)
\left( 
\frac{\partial F_0(X_{\bar u}(t,\theta), \theta)}{\partial x}
+ \sum_{i=1}^k{\bar u}_i(t) 
\frac{\partial F_i(X_{\bar u}(t,\theta), \theta)}{\partial x} 
\right) & \mbox{for a.e. } t\in [0,1],\\
\Lambda_{\bar u}(1,\theta) = \nabla_x a(X_{\bar u}(1,\theta),\theta),
\end{cases}
\end{equation}
for every $\theta \in \Theta$.
Then, we have that
\begin{equation}\label{eq:PMP_ens}
\bar u(t) \in 
\arg \max_{\!\!\!\!\!\!
\!\!\!\!\!\! v\in \R^k}
\left\{
\int_\Theta -\Lambda_{\bar u}(t,\theta)\cdot 
F(X_{\bar u}(t,\theta),\theta))\cdot v \, d\mu(\theta) 
- \frac{\beta}{2}|v|^2_2
\right\}
\end{equation}
for a.e. $t\in [0,1]$.
\end{theorem}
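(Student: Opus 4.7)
The plan is to recover the Maximum Principle by a $\Gamma$-convergence argument, using the perturbed functionals $\F^\e$ and $\F^{N,\e}$ already introduced in \eqref{eq:def_F_eps} and \eqref{eq:def_F_N_eps}. Fix $\e>0$ and, for every $N\geq 1$, let $\hat u_{N,\e} \in \X_{\bar u}$ be a minimizer of the restriction of $\F^{N,\e}$ to $\X_{\bar u}$. Proposition~\ref{prop:G_conv_F_eps} guarantees that $\hat u_{N,\e} \to_{L^2} \bar u$ in the strong topology as $N\to\infty$. In particular, for every $\e>0$ there exists $N_\e\in\NN$ such that, for every $N\geq N_\e$, the control $\hat u_{N,\e}$ lies in the interior of $\X_{\bar u}$, so it is a local minimizer of $\F^{N,\e}$ in the unconstrained sense.

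The next step is to apply an adapted version of Theorem~\ref{thm:PMP_N} to the functional $\F^{N,\e}$: the only modification is that the running cost is $\frac{\beta}{2}|v|_2^2 + \frac{\e}{2}|v-\bar u(t)|_2^2$ instead of $\frac{\beta}{2}|v|_2^2$. Since the extra term depends only on the control variable, the adjoint equation \eqref{eq:adj_eq_PMP_N} is unchanged, and the Maximum condition becomes, using the integral formulation of Remark~\ref{rmk:PMP_formulation},
\begin{equation*}
\hat u_{N,\e}(t) \in \arg\max_{v\in\R^k}
\left\{
\int_\Theta -\Lambda_{\hat u_{N,\e}}(t,\theta)\cdot F(X_{\hat u_{N,\e}}(t,\theta),\theta)\cdot v\, d\mu_N(\theta)
- \frac{\beta}{2}|v|_2^2 - \frac{\e}{2}|v-\bar u(t)|_2^2
\right\}
\end{equation*}
for a.e. $t\in[0,1]$. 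Since the objective is strictly concave in $v$, the maximizer is unique and characterised by the first-order condition, which reads
\begin{equation*}
(\beta+\e)\, \hat u_{N,\e}(t) = - \int_\Theta F(X_{\hat u_{N,\e}}(t,\theta),\theta)^T \Lambda_{\hat u_{N,\e}}(t,\theta)^T \, d\mu_N(\theta) + \e\, \bar u(t)
\end{equation*}
for a.e. $t\in[0,1]$.

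The plan is now to pass to the limit $N\to\infty$ in the previous identity with $\e>0$ fixed. On the left-hand side, the strong $L^2$-convergence $\hat u_{N,\e}\to \bar u$ is already available. On the right-hand side, the strong convergence implies weak convergence, so Proposition~\ref{prop:unif_conv_map_X} and Proposition~\ref{prop:unif_conv_map_Lambda} yield $X_{\hat u_{N,\e}}\to X_{\bar u}$ and $\Lambda_{\hat u_{N,\e}}\to \Lambda_{\bar u}$ uniformly on $[0,1]\times\Theta$. Combining this uniform convergence with the continuity of $F$ and with the weak-$*$ convergence $\mu_N \weak^* \mu$, one gets that the integral against $d\mu_N(\theta)$ converges uniformly in $t$ to the analogous integral against $d\mu(\theta)$. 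Taking $L^2$-limits on both sides therefore leads to
\begin{equation*}
(\beta+\e)\,\bar u(t) = -\int_\Theta F(X_{\bar u}(t,\theta),\theta)^T\Lambda_{\bar u}(t,\theta)^T\, d\mu(\theta) + \e\, \bar u(t),
\end{equation*}
i.e., $\beta\, \bar u(t) = -\int_\Theta F(X_{\bar u}(t,\theta),\theta)^T\Lambda_{\bar u}(t,\theta)^T\, d\mu(\theta)$ for a.e. $t\in[0,1]$. The dependence on $\e$ has disappeared, so it is not even necessary to let $\e\to 0$. The latter identity is precisely the first-order optimality condition for the strictly concave maximization appearing in \eqref{eq:PMP_ens}, and hence $\bar u(t)$ is its unique maximizer, which proves the statement.

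The main obstacle is verifying the uniform equi-continuity and equi-boundedness needed to apply Proposition~\ref{prop:unif_conv_map_Lambda} to the sequence $(\hat u_{N,\e})_{N\in\NN}$; those, however, are built into that proposition. The second delicate point is the interplay between the uniform convergence of $(X_{\hat u_{N,\e}},\Lambda_{\hat u_{N,\e}})$ on $[0,1]\times\Theta$ and the weak convergence $\mu_N \weak^*\mu$, which is what guarantees that the finite-ensemble Maximum condition passes to the infinite-ensemble integral form in \eqref{eq:PMP_ens}.
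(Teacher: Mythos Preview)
Your proof is correct and follows essentially the same route as the paper: fix $\e>0$, apply the finite-ensemble Maximum Principle to the minimizers $\hat u_{N,\e}$ of $\F^{N,\e}$, write the maximum condition as an explicit identity, and pass to the limit $N\to\infty$ using Proposition~\ref{prop:G_conv_F_eps} together with Propositions~\ref{prop:unif_conv_map_X} and~\ref{prop:unif_conv_map_Lambda}. Your observation that $\hat u_{N,\e}$ eventually lies in the interior of $\X_{\bar u}$ (so the unconstrained PMP applies) is a useful clarification the paper leaves implicit, and your passage to the limit in $L^2$ rather than via a.e.\ pointwise convergence along a subsequence is a harmless variant of the paper's argument.
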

\begin{proof}
Let us fix $\e>0$ and, for every $N\geq 1$,
let us consider the functional 
$\F^{N,\e}:\X_{\bar u}\to\R_+$ and let
$\hat u_{N,\e} \in \arg \min_{\X_{\bar u}}\F^{N,\e}$.
As done in the proof of Theorem~\ref{thm:PMP_N},
the problem of minimizing $\F^{N,\e}$ over 
$\X_{\bar u}$ can be reduced to a classical
optimal control problem with end-point cost.
Therefore, using similar computations as in the
proof of Theorem~\ref{thm:PMP_N}, we deduce that
for every $N\geq 1$ the control $\hat u_{N,\e}$
is associated with a normal Pontryagin extremal of
the cost functional $\F^{N,\e}$.
Using the notations introduced in 
Remark~\ref{rmk:PMP_formulation}, 
if we consider the application 
$\Lambda_{\hat u_{N,\e}}:[0,1]\times\Theta\to
(\R^n)^*$  
defined in \eqref{eq:def_Lambda} and corresponding to
the admissible control $\hat u_{N,\e}\in \U$, 
we obtain that for a.e. $t\in [0,1]$
\begin{equation*}
\hat u_{N,\e}(t) \in 
\arg \max_{\!\!\!\!\!\!
\!\!\!\!\!\! v\in \R^k}
\left\{
\int_\Theta -\Lambda_{\hat u_{N,\e}}(t,\theta)\cdot 
F(X_{\hat u_{N,\e}}(t,\theta),\theta))\cdot v \, d\mu_N(\theta) 
- \frac{\beta}{2}|v|^2_2
- \frac{\e}{2}|v-\bar u(t)|_2^2
\right\},
\end{equation*}
i.e.,
\begin{equation}\label{eq:PMP_aux}
\hat u_{N,\e}(t) = 
\frac{1}{\beta + \e}\left(
\e \bar u(t) - 
\int_\Theta \left[
\Lambda_{\hat u_{N,\e}}(t,\theta)\cdot 
F(X_{\hat u_{N,\e}}(t,\theta),\theta))
\right]^T
 \, d\mu_N(\theta) 
\right)
\end{equation}
for a.e. $t\in [0,1]$ and for every $N\geq 1$.
For every $N\geq1$,
we denote by $Z_N\subset [0,1]$ the set of instants
with null Lebesgue measure
where the identity \eqref{eq:PMP_aux} does not hold.
In virtue of Proposition~\ref{prop:G_conv_F_eps},
we have that
$\hat u_{N,\e} \to_{L^2} \bar u$ as $N\to \infty$,
and, up to the extraction of a subsequence that we do 
not rename for simplicity, this implies that
there exists $Z_\infty\subset [0,1]$ with zero Lebesgue
measure such that 
$\hat u_{N,\e}(t) \to \bar u(t)$ as $N\to\infty$
for every $t\in [0,1]\setminus
Z_\infty$.
On the other hand, owing to Proposition~
\ref{prop:unif_conv_map_X} and Proposition~
\ref{prop:unif_conv_map_Lambda}, we deduce that
for every $t\in [0,1]$ the sequence of functions
$(f^t_N)_{N\geq1}$ satisfy $f^t_N\to_{C^0}f^t$
as $N\to\infty$, where
$f^t_N,f^t:\Theta\to\R^m$ are defined as follows:
\begin{align*}
&\theta\mapsto f^t_N(\theta) = \left[
\Lambda_{\hat u_{N,\e}}(t,\theta)\cdot 
F(X_{\hat u_{N,\e}}(t,\theta),\theta))
\right]^T, \\
&\theta\mapsto f^t(\theta) = \left[
\Lambda_{\bar u}(t,\theta)\cdot 
F(X_{\bar u}(t,\theta),\theta))
\right]^T.
\end{align*}
Moreover, recalling that $\mu_N\weak^* \mu$ as 
$N\to\infty$ by assumption, if we set
$Z:= Z_\infty \cup \bigcup_{N\geq 1}Z_N$, then 
for every
$t\in [0,1]\setminus Z$ we can take the pointwise
limit of \eqref{eq:PMP_aux} as 
$N\to\infty$, which yields:
\begin{equation*}
\bar u(t) = 
\frac{1}{\beta + \e}\left(
\e \bar u(t) - 
\int_\Theta \left[
\Lambda_{\bar u}(t,\theta)\cdot 
F(X_{\bar u}(t,\theta),\theta))
\right]^T
 \, d\mu(\theta) 
\right),
\end{equation*}
i.e.,
\begin{equation}\label{eq:PMP_limit}
\bar u(t) = -
\frac{1}{\beta}
\int_\Theta \left[
\Lambda_{\bar u}(t,\theta)\cdot 
F(X_{\bar u}(t,\theta),\theta))
\right]^T
 \, d\mu(\theta)
\end{equation}
for a.e. $t\in [0,1]$. From \eqref{eq:PMP_limit} - which we observe does not depend on the choice
of $\e>0$ - we finally obtain 
\eqref{eq:PMP_ens}.
\end{proof}

\begin{remark}
Theorem~\ref{thm:ens_PMP} shows that any local minimizer of the functional $\F$ is associated with a normal extremal of the ensemble optimal control problem. Moreover, we observe that there are no nontrivial abnormal extremals. Indeed, if we take $\epsilon \in \R$ and we consider $\Lambda_{\bar u}(1,\theta)=\epsilon \nabla_x a(X_{\bar u}(1,\theta),\theta)$ for every $\theta\in \Theta$ as the final-time datum for \eqref{eq:Lambda_PMP}, when $\epsilon = 0$ we obtain $(\Lambda_{\bar u},\epsilon)\equiv 0$. Finally, we observe that, in virtue of the concave quadratic term, the maximization problem \eqref{eq:PMP_ens} always admits a solution. Hence, there are no singular arcs. 
\end{remark}

\begin{remark} \label{rmk:PMP_G_conv}
For some \textit{global} minimizers $\bar u\in \U$ of the functional $\F:\U\to\R_+$ defined as in \eqref{eq:def_fun_ens}, Theorem~\ref{thm:ens_PMP} can be directly deduced from the $\Gamma$-convergence result established in Section~\ref{sec:G_conv}.
Namely, this is the case for those global minimizers $\bar u\in \arg\min_\U \F$ that can be recovered as the limiting points of the minimizers of the approximating functionals $\F^N:\U\to\R_+$ introduced in \eqref{eq:def_fun_N}.
Indeed, if $\hat u_N\in \arg\min_\U \F^N$ for every$N\geq 1$ and $\bar u\in \U$ is an $L^2$-strong accumulation point of the sequence $(\hat u_N)_{N\geq 1}$, then Corollary~\ref{cor:conv_min} guarantees that $\bar u\in \arg\min_\U \F$, and we can obtain the condition \eqref{eq:PMP_ens} by repeating the proof of Theorem~\ref{thm:ens_PMP} with $\e =0$. 

\noindent
However, in general, given a family of functionals $\mathcal I^N:\X\to\R$ on a metric space $(\X,d)$ such that $\mathcal I^N\to^\Gamma \mathcal I$ as $N\to\infty$, there could be elements in $\arg \min_\X \mathcal{I} $ that cannot be recovered as limiting points of minimizers of $(\mathcal{I}^N)_{N\geq 1}$. 
For instance, if we set $\X=[-1/2, 1/2]$ with the Euclidean distance, we have that the functions $\mathcal{I}^N : \X\to\R$ defined as $\mathcal{I}^N(x) := |x|^N$ are $\Gamma$-converging as $N\to\infty$ to the function $\mathcal{I}\equiv 0$.
On one hand, we have that $\arg \min_\X \mathcal{I}= \X$, while $\arg \min_\X \mathcal{I}^N = \{ 0 \}$ for every $N\in\NN$.
As a matter of fact, the minimizers of $\mathcal{I}$ in $\X \setminus \{ 0 \}$ cannot be recovered as a limit of minimizers of $(\mathcal{I}^N)_{N\in\NN}$.

\noindent
For this reason, in our case, the introduction of the auxiliary functionals $\F^{\e}$ and $(\F^{N,\e})$ in, respectively,  \eqref{eq:def_F_eps} and \eqref{eq:def_F_N_eps} is precisely aimed at managing this situation, as well as deducing the Maximum Principle also for \textit{local} minimizers, and not only for \textit{global} minimizers.
\end{remark}

\begin{remark} \label{rem:cont_PMP}
Results concerning the necessary optimality conditions for ensemble optimal control problems are of great interest from the theoretical viewpoint. A natural question is whether they could be successfully employed to derive numerical methods for the approximate resolutions of such problems. Some efforts in this direction were done in \cite{BCFH22}, where the authors obtain a \textit{mean-field} Maximum Principle for problems with uncertain initial datum and with the controlled dynamics unaffected by the unknown parameter. In that framework, a key-ingredient of the Maximum Principle \cite[Theorem~4.1]{BCFH22} is a real-valued function $\psi \in C^1([0,1], C^2_c(\R^n))$ that solves a backward-evolution PDE. We observe that the quantity $\nabla_x \psi$ is somehow related to the function $\Lambda_u$ that we introduced in our discussion (see \cite[Proposition~4.9]{BCFH22} for more details). In \cite{BCFH22} the authors proposed a numerical scheme for their mean-field optimal control problem relying on an approximated computation of the solution of the backward-evolution PDE. Despite the encouraging results obtained in the experiments, the main drawback of this approach is that the resolution of the PDE is affordable only in low dimensions (e.g., in \cite{BCFH22} examples in dimensions $1$ and $2$ were considered). 
\end{remark}

\end{section}

\begin{section}{Numerical schemes for optimal control 
of ensembles} \label{sec:num_schemes}

In the present section we introduce two numerical schemes for finite-ensemble optimal control problems with end-pint cost.
The starting points are the results of Section~\ref{sec:grad_flow}, and we follow an approach similar to \cite{S2}. The first method consists of the projection of the field $\G^N:\U\to\U$ induced by $\F^N$ onto a finite-dimensional subspace $\U_M\subset \U$.
The second one is based on the Pontryagin Maximum Principle and it was first proposed in \cite{SS80}.

Before proceeding, we introduce the notations and the framework that are shared by the two methods. Let us consider the interval $[0,1]$, i.e., the evolution time horizon of the ensemble of controlled dynamical systems \eqref{eq:ens_ctrl_sys}, and for $M\geq 2$ let us take the equispaced nodes $\{ 0, \frac1M,\ldots, \frac{M-1}M, 1 \}$. Recalling that $\U:=L^2([0,1],\R^k)$, let us define the subspace $\U_M\subset \U$ as follows:
\begin{equation} \label{eq:def_subspace_ctrls}
u\in \U_M \iff u(t) =
\begin{cases}
u_1& \mbox{if } 0\leq t< \frac1M\\
\vdots\\
u_M & \mbox{if } \frac{M-1}{M}\leq t\leq 1,
\end{cases}
\end{equation}
where $u_1,\ldots,u_M\in \R^k$.
For every $l=1,\ldots,M$, we shall write $u_l = (u_{1,l},\ldots,u_{k,l})$ to denote the components of $u_l\in\R^k$. Then, any element $u\in\U_M$ will be represented by the following array:
\begin{equation}\label{eq:not_u_scheme}
u = (u_{i,l})^{i=1,\ldots,k}_{l=1,\ldots,M}.
\end{equation}
For every $N\geq 1$, let $\mu_N$ be the discrete probability measure \eqref{eq:emp_meas} on $\Theta$ that approximates the probability measure $\mu$ involved in the definition of the functional $\F:\U\to\R_+$ in \eqref{eq:fun_ens_endpoint}.
Let $\{ \theta_1,\ldots,\theta_N \}\subset\Theta$ be the points charged by $\mu_N$, and, for every $j=1,\ldots,N$, let $x_u^{\theta_j}:[0,1]\to \R^n$ be the solution of \eqref{eq:ens_ctrl_sys} corresponding to the parameter $\theta_j$ and to the control $u$.
Then, for every $j=1,\ldots,N$ and $l=0,\ldots,M$ we define the array that collects the evaluation of the trajectories at the time nodes:
\begin{equation}\label{eq:not_x_scheme}
(x_l^j)^{j=1,\ldots,N}_{l=0,\ldots,M}, \qquad
x^j_l := x_u^{\theta_j}\left(\frac{l}{M}\right).
\end{equation}
We observe that in \eqref{eq:not_x_scheme} we
dropped the reference to the control that generates the trajectories. This is done to avoid hard notations, since we hope that it will be clear from the context the correspondence between trajectories and control. 
Similarly, for every $j=1,\ldots,N$, let $\lambda_u^j:[0,1]\to(\R^n)^*$ be the solution of \eqref{eq:adj_eq_N}, and let us introduce the corresponding array of the evaluations:
\begin{equation}\label{eq:not_lambda_scheme}
(\lambda_l^j)^{j=1,\ldots,N}_{l=0,\ldots,M},\qquad
\lambda_l^j:= \lambda_u^j\left( \frac{l}{M} \right).
\end{equation}

\begin{subsection}{Projected gradient field} \label{subsec:grad_method}
In this subsection we describe a method for the numerical minimization of the functional $\F^N:\U\to\R_+$ defined as in \eqref{eq:fun_ens_N_endpoint}. This algorithm consists of the projection of the gradient field $\G^N:\U\to\U$ derived in \eqref{eq:grad_field_N} onto the finite-dimensional subspace $\U_M\subset \U$ defined as in \eqref{eq:def_subspace_ctrls}.
This approach has been introduced in \cite{S2}, where it has been studied the problem of observations-based approximations of diffeomorphisms. 
We observe that we can explicitly compute the expression of the orthogonal projector $P_M:\U\to\U_M$. Indeed, we have
\begin{equation}\label{eq:orth_proj_M}
P_M[u](t) = \begin{cases}
M \int_{0}^{\frac1M} u(t)\,dt
& \mbox{if } 0\leq t<\frac1M,\\
\vdots \\
M \int_{\frac{M-1}{M}}^{1} u(t)\,dt
& \mbox{if } \frac{M-1}{M}\leq t\leq 1,
\end{cases}
\end{equation}  
for every $u\in\U$.
Thus, we can can define the projected field $\G^N_M:\U_M\to\U_M$ as 
\begin{equation}\label{eq:proj_field_M_N}
\G^N_M[u] := P_M[\G^N[u]]
\end{equation}
for every $u\in\U_M$, and we end up with a vector field on a finite-dimensional space.
At this point, in view of the numerical implementation of the method, it is relevant to observe that the computation of $\G^N[u]$ requires the knowledge of the trajectories $x_u^{\theta_1},\ldots,x_u^{\theta_N}:[0,1]\to\R^n$ and of the curves $\lambda_u^1,\ldots,\lambda_u^N :[0,1]\to(\R^n)^*$. However, during the execution of the algorithm, we have access only to the (approximated) values of these functions at the time nodes $\{ 0, \frac1M,\ldots, 1\}$.
Therefore, we need to adapt \eqref{eq:proj_field_M_N} to meet our needs. 
For every $u\in \U_M$, let us consider the corresponding arrays $(x_l^j)_{l=0,\ldots,M}^{j=1,\ldots,N}$ and $(\lambda^j_l)_{l=0,\ldots,M}^{j=1,\ldots,N}$ defined as in \eqref{eq:not_x_scheme} and \eqref{eq:not_lambda_scheme}, respectively.
In practice, they can be computed using standard numerical schemes for the approximation of ODEs. For every $l=1,\ldots,M$, we use the approximation \begin{align*}
M\int_{\frac{l-1}{M}}^{\frac{l}{M}}  
\sum_{j=1}^N\alpha_j \Big(F^{\theta_j}&(x_u^{\theta_j}(t))^T\cdot
\lambda_u^j(t)^T\Big) + \beta u(t) 
\,dt \\
&\simeq 
\frac12
\sum_{j=1}^N \alpha_j\left(F^{\theta_j}(x_{l-1}^j)^T \cdot 
\lambda_{l-1}^{j\,T} 
+  F^{\theta_j}(x_{l}^j)^T\cdot
\lambda_{l}^{j\,T}
\right)+ \beta u_l,
\end{align*}
where $\alpha_1,\ldots,\alpha_N$ are the coefficients of convex combination involved in the definition of $\mu_N = \sum_{j=1}^N\alpha_j\delta_{\theta_j}$.
Then, for every $u\in \U_M$,
after computing the corresponding arrays
$(x_l^j)_{l=0,\ldots,M}^{j=1,\ldots,N}$
and $(\lambda^j_l)_{l=0,\ldots,M}^{j=1,\ldots,N}$
with a proper ODEs integrator scheme,
we use the quantity
 $\Delta u =(\Delta u_1,\ldots, \Delta u_M)
  \in \U_M$ to approximate
$\G^N_M[u]$, where we set
\begin{equation}\label{eq:grad_field_M_N_approx}
\Delta u_l :=  \frac12
\sum_{j=1}^N \alpha_j\left(F^{\theta_j}(x_{l-1}^j)^T\cdot
\lambda_{l-1}^{j\,T} 
+  F^{\theta_j}(x_{l}^j)^T\cdot
\lambda_{l}^{j\,T}
\right)+ \beta u_l
\end{equation}
for every $l=1,\ldots,M$.
We are now in position to describe the 
Projected Gradient Field algorithm.
We report it in Algorithm~\ref{alg:proj_grad_field}.

\begin{algorithm}
\KwData{ 
\begin{itemize}
\item 
$\{\theta_1,\ldots\theta_N\}\subset \Theta$
subset of parameters;
\item
$F_0^{\theta_1},\ldots,F_0^{\theta_N}:\R^n\to\R^n$
drift fields;
\item
$F^{\theta_1},\ldots,F^{\theta_N}:\R^n\to
\R^{n\times k}$ controlled fields;
\item
$(x_0^j)^{j=1,\ldots,N}=(x_0^{\theta_1},\ldots,x_0^{
\theta_N})$ initial states of trajectories;
\item 
$a(\cdot,\theta_1),\ldots,a(\cdot,\theta_N):\R^n\to\R_+$ end-point costs, and $\beta>0$.
\end{itemize}
{\bf Algorithm setting:} 
$M= \mathrm{dim}\,\, \U_M$,
$\tau\in (0,1)$, $c \in (0,1)$, 
$\gamma >0$, $\max_{\mathrm{iter}}\geq 1$,
$u\in \U_M$.}

$h\gets\frac{1}{M}$\;

\For(\tcp*[f]{First computation of  
trajectories}){$j=1,\ldots,N$ }{
		Compute $(x^j_l)_{l=1,\ldots,M}$ using
		$(u_l)_{l=1,\ldots,M}$ and $x_0^j$\;
	}

$\mathrm{Cost}\gets \sum_{j=1}^N \alpha_j
a(x^j_M,\theta_j) + \frac\beta2 ||u||_{L^2}^2$\;
$\mathrm{flag}\gets 1$\;
\For(\tcp*[f]{Iterations of Projected Gradient Field}){$r=1,\ldots,\max_{\mathrm{iter}}$ }{
	\If(\tcp*[f]{Update covectors only if necessary}){$\mathrm{flag}=1$ }{
		\For(\tcp*[f]{Backward computation of 
		covectors }){$j=1,\ldots,N$ }{
			$\lambda^j_M\gets 
			 \nabla a(x^j_M,\theta_j)$\;
			Compute $(\lambda_l^j)_{l=0,\ldots,M-1}$
			using $(u_l)_{l=1,\ldots,M}$, 
			$(x^j_l)_{l=0,\ldots,M}$ and 
			$\lambda_M^j$\;					
			}
		}
	\For(\tcp*[f]{Compute $\Delta u$ using \eqref{eq:grad_field_M_N_approx}}){$l=1,\ldots,M$ 
	}{
		$\Delta u_l \gets  \frac12
\sum_{j=1}^N\alpha_j \left(F^{\theta_j}(x_{l-1}^j)^T\cdot
\lambda_{l-1}^{j\,T} 
+  F^{\theta_j}(x_{l}^j)^T\cdot
\lambda_{l}^{j\,T}
\right)+ \beta u_l$\;
		}
		$u^{\mathrm{new}}\gets u - \gamma \Delta u$\;
		\For(\tcp*[f]{Forward computation
		of trajectories}){$j=1,\ldots,N$ }{
			$x^{j,\mathrm{new}}_0\gets x^j_0$\;			
					Compute $(x^{j,\mathrm{new}}_l)_{l=1,\ldots,M}$ 	using 
		$(u^{\mathrm{new}}_l)_{l=1,\ldots,M}$ and $x_0^{j,\mathrm{new}}$\;		
			}
	$\mathrm{Cost^{new}}
\gets  \sum_{j=1}^N \alpha_j
a(x^{j,\mathrm{new}}_M ,\theta_j) + \frac\beta2 ||u^{\mathrm{new}}||_{L^2}^2$\;	
	\eIf(\tcp*[f]{Backtracking for $\gamma$}){$\mathrm{Cost}\geq \mathrm{Cost^{new}}
+ c\gamma || \Delta u ||_{L^2}^2$ }{
		$u\gets u^{\mathrm{new}}$,
		$x\gets x^{\mathrm{new}}$\;
		$\mathrm{Cost}\gets \mathrm{Cost^{new}}$\;
		$\mathrm{flag} \gets 1$\;
		}
		{
		$\gamma \gets \tau \gamma$\;
		$\mathrm{flag} \gets 0$\;
		}
	}
\caption{Projected Gradient Field}
\label{alg:proj_grad_field}
\end{algorithm}

\begin{remark}\label{rmk:par_proj_grad}
We observe that the {\it for loops} at the lines
9--12 and 18--21 (corresponding, respectively, to
the update of the curves of
covectors and of the trajectories) can be carried 
out in parallel with respect to the 
index $j=1,\ldots,N$. This can be considered 
when dealing with large sub-ensembles of 
parameters.
\end{remark}

\begin{remark}\label{rmk:backtracking}
The step-size $\gamma>0$ for 
Algorithm~\ref{alg:proj_grad_field} is set 
during the initialization of the method, and it
is adaptively adjusted through the 
{\it if clause} at the lines 23--30
via the classical Armijo-Goldstein condition
(see, e.g., \cite[Section~1.2.3]{N18}).
We observe that, if the update of the control
at the $r$-th iteration is rejected, 
at the $r+1$-th iteration 
it is not necessary to re-compute the 
array of covectors 
$(\lambda_l^j)^{j=1,\ldots,N}_{l=0,\ldots,M}$.
In this regards, the {\it if clause} at the line
8 prevents this computation in the case
of rejection at the previous passage.
\end{remark}

\end{subsection}

\begin{subsection}{Iterative Maximum Principle}
\label{subsec:iter_PMP}
In this subsection we present a second numerical method for the minimization of the functional $\F^N:\U\to\R_+$, based on the Pontryagin Maximum Principle. 
The idea of using the Maximum Principle to design approximation schemes for optimal control problems was well established in the Russian literature (see \cite{CL} for a survey paper in English). 
Here we adapt to our problem the method proposed in \cite{SS80}, which is in turn a stabilization of one of the algorithms reported in \cite{CL}. Finally, this approach has been recently followed in \cite{S2} in the framework of diffeomorphisms approximation.

The key idea relies on iterative updates of the control through the resolution of a maximization problem related to the condition \eqref{eq:max_cond}. 
However, the substantial difference from Algorithm~\ref{alg:proj_grad_field} consists in the fact that the controls and the trajectories are computed simultaneously.
More precisely, let us consider $M\geq 1$ and let $\U_M\subset\U$ be the finite-dimensional subspace introduced in \eqref{eq:def_subspace_ctrls}.
Given an initial guess $u=(u_l)_{l=1,\ldots,M}\in \U_M$, let $(x_l^j)_{l=0,\ldots,M}^{j=1,\ldots,N}$ and $(\lambda^j_l)_{l=0,\ldots,M}^{j=1,\ldots,N}$ be the corresponding arrays, defined as in \eqref{eq:not_x_scheme} and \eqref{eq:not_lambda_scheme}, respectively. 
For $l=1$, the value of $u^{\mathrm{new}}_1$ (i.e., the updated value of control in the time interval $[0,1/M]$) is computed using $(x_0^j)^{j=1,\ldots,N}$ and $(\lambda^j_0)^{j=1,\ldots,N}$ as follows:
\begin{equation}\label{eq:PMP_update}
u_1^{\mathrm{new}} =
\arg \max_{\!\!\!\!\!\!
\!\!\!\!\!\! v\in \R^k}
\left\{
\sum_{j=1}^N \alpha_j\left(
-\lambda^j_0 \cdot 
F^{\theta_j}(x_0^j)\cdot v \right) 
- \frac{\beta}{2}|v|^2_2
-\frac{1}{2\gamma}|v-u_1|^2_2
\right\},
\end{equation}
where $\gamma>0$ plays the role of the
step-size of the update, and $\alpha_1,\ldots,\alpha_N$ are the coefficients of convex combination involved in the definition of $\mu_N = \sum_{j=1}^N\alpha_j\delta_{\theta_j}$.
From the value $u_1^{\mathrm{new}}$ just 
obtained and the initial conditions
$(x_0^j)^{j=1,\ldots,N}$, we compute 
$(x_1^j)^{j=1,\ldots,N}$, i.e.,
the approximation of the trajectories at the 
time-node $1/M$. At this point,
using $(x_1^j)^{j=1,\ldots,N}$ and 
$(\lambda_1^j)^{j=1,\ldots,N}$,
we calculate $u_2^{\mathrm{new}}$ 
with a maximization problem analogue to
\eqref{eq:PMP_update}. Finally, we 
sequentially repeat the
same procedure for every $l=2,\ldots,M$.
We report the scheme in 
Algorithm~\ref{alg:iter_PMP}.

\begin{algorithm}
\KwData{ 
\begin{itemize}
\item 
$\{\theta_1,\ldots\theta_N\}\subset \Theta$
subset of parameters;
\item
$F_0^{\theta_1},\ldots,F_0^{\theta_N}:\R^n\to\R^n$
drift fields;
\item
$F^{\theta_1},\ldots,F^{\theta_N}:\R^n\to
\R^{n\times k}$ controlled fields;
\item
$(x_0^j)^{j=1,\ldots,N}=(x_0^{\theta_1},\ldots,x_0^{
\theta_N})$ initial states of trajectories;
\item 
$a(\cdot,\theta_1),\ldots,a(\cdot,\theta_N):\R^n\to\R_+$ end-point costs, and $\beta>0$.
\end{itemize}
{\bf Algorithm setting:} 
$M= \mathrm{dim}\,\, \U_M$,
$\tau\in (0,1)$, 
$\gamma >0$, $\max_{\mathrm{iter}}\geq 1$,
$u\in \U_M$.}

$h\gets\frac{1}{M}$\;

\For(\tcp*[f]{First computation of  
trajectories}){$j=1,\ldots,N$ }{
		Compute $(x^j_l)_{l=1,\ldots,M}$ using
		$(u_l)_{l=1,\ldots,M}$ and $x_0^j$\;
	}

$\mathrm{Cost}\gets \sum_{j=1}^N \alpha_j
a(x^j_M,\theta_j) + \frac\beta2 ||u||_{L^2}^2$\;
$\mathrm{flag}\gets 1$\;
\For(\tcp*[f]{Iterations of Iterative Maximum Principle}){$r=1,\ldots,\max_{\mathrm{iter}}$ }{
	\If(\tcp*[f]{Update covectors only if necessary}){$\mathrm{flag}=1$ }{
		\For(\tcp*[f]{Backward computation of 
		covectors }){$j=1,\ldots,N$ }{
			$\lambda^j_M\gets 
			\nabla a(x^j_M,\theta_j)$\;
			Compute $(\lambda_l^j)_{l=0,\ldots,M-1}$
			using $(u_l)_{l=1,\ldots,M}$, 
			$(x^j_l)_{l=0,\ldots,M}$ and 
			$\lambda_M^j$\;					
			}
		}
		
	$(x_0^{j,\mathrm{new}})^{j=1,\ldots,N}\gets 
	(x_0^{j})^{j=1,\ldots,N}$\;
	$(\lambda_0^{j,\mathrm{corr}}
	)^{j=1,\ldots,N}\gets 
	(\lambda_0^{j})^{j=1,\ldots,N}$\;
	\For(\tcp*[f]{Update of controls and trajectories}){$l=1,\ldots,M$ 
	}{

		$u_l^{\mathrm{new}} \gets
\arg \max_{ v\in \R^k}
\left\{
\sum_{j=1}^N \alpha_j \left(
-\lambda^{j,\mathrm{corr}}_{l-1}\cdot 
F^{\theta_j}(x_{l-1}^{j,\mathrm{new}})\cdot v \right) 
- \frac{\beta}{2}|v|^2_2
-\frac{1}{2\gamma}|v-u_l|^2_2
\right\}$\;
	\For{$j=1,\ldots,N$}{
	Compute
	$x_l^{j,\mathrm{new}}$ using 
	$x_{l-1}^{j,\mathrm{new}}$ and 
	$u_l^{\mathrm{new}}$\;
	$\lambda_l^{j, \mathrm{corr}}
	\gets \lambda_l^j - 
	\alpha_j \nabla a(x_l^j,\theta_j) 
	+ \alpha_j \nabla a(x_l^{j,\mathrm{new}},
	\theta_j)$\;}
		}

	$\mathrm{Cost^{new}}
\gets  \sum_{j=1}^N\alpha_j
a(x^{j,\mathrm{new}}_M ,\theta_j) + \frac\beta2 ||u^{\mathrm{new}}||_{L^2}^2$\;	
	\eIf(\tcp*[f]{Backtracking for $\gamma$}){$\mathrm{Cost}> \mathrm{Cost^{new}}$ }{
		$u\gets u^{\mathrm{new}}$,
		$x\gets x^{\mathrm{new}}$\;
		$\mathrm{Cost}\gets \mathrm{Cost^{new}}$\;
		$\mathrm{flag} \gets 1$\;
		}
		{
		$\gamma \gets \tau \gamma$\;
		$\mathrm{flag} \gets 0$\;
		}
	}
\caption{Iterative Maximum Principle}
\label{alg:iter_PMP}
\end{algorithm}

\begin{remark} \label{rmk:max_comput}
The maximization at line 17 can be solved directly
at a very low computational cost. 
Indeed, we have that
\begin{equation*}
u_l^{\mathrm{new}}\gets 
\frac1{1+\gamma\beta}\left(
u_l - \sum_{j=1}^N \alpha_j \left(
\lambda_l^{j,\mathrm{corr}} \cdot
F^{\theta_j}
(x_{l-1}^{j,\mathrm{new}})
\right)^T
\right)
\end{equation*}
for every $l=1,\ldots,M$.
This is 
essentially due to the fact that the 
systems of the ensemble \eqref{eq:ens_ctrl_sys}
have an affine dependence on the control. 
\end{remark}

\begin{remark}\label{rmk:par_PMP}
As well as in Algorithm~\ref{alg:proj_grad_field},
in this case the computation of 
$(\lambda_l^j)_{l=0,\ldots,M-1}^{j=1,\ldots,N}$
can be carried out in parallel 
(see the {\it for loop} at the 
lines 9--12).
Unfortunately, this is no more true 
for the update of the trajectories, since
in Algorithm~\ref{alg:iter_PMP} the 
computation of 
$(x_l^{j,\mathrm{new}})^{j=1,\ldots,N}$
takes place immediately after obtaining
$u_l^{\mathrm{new}}$, for every
$l=1,\ldots,M$ (see lines 17--21).
\end{remark}

\begin{remark}\label{rmk:corr_PMP}
At the line 20 of Algorithm~\ref{alg:iter_PMP}
we introduce a correction for the value of
the covector. 
This feature is not present in the original
scheme proposed in \cite{SS80}, where the 
authors considered optimal control problems
without end-point cost.
\end{remark}

\begin{remark}\label{rmk:backtrack_PMP}
Also in Algorithm~\ref{alg:iter_PMP}
 the step-size is adaptively
adjust, and it is reduced if, after the iteration, 
the value of the functional has not decreased.
In case of rejection of the update, it is not
necessary to recompute 
$(\lambda_l^j)_{l=0,\ldots,M}^{j=1,\ldots,N}$.
This is a common feature with 
Algorithm~\ref{alg:proj_grad_field},
as observed in 
Remark~\ref{rmk:backtracking}.  
\end{remark}

\end{subsection}
\end{section}

\begin{section}{Numerical experiments}\label{sec:num_exp}
In this section we test the algorithms 
described in Section~\ref{sec:num_schemes}
on an optimal control problem involving an
ensemble of linear dynamical systems in
$\R^2$. Namely, given 
$\theta_{\min} < \theta_{\max} \in \R$,
let us set $\Theta:=[\theta_{\min},\theta_{\max}]
\subset\R$, and let us consider the 
ensemble of control systems
\begin{equation}\label{eq:ens_ctrl_num_exp}
\begin{cases}
\dot x^\theta_u(t) = A^\theta x^\theta_u(t) 
+ b_1 u_1(t) + b_2 u_2(t)
& \mbox{a.e. in }[0,1],\\
x^\theta_u(0) = x_0^\theta,
\end{cases}
\end{equation}
where 
$\theta\mapsto x_0^\theta$ is a continuous function
that prescribes the initial states,
$u=(u_1,u_2)^T\in \U:= L^2([0,1],\R^2)$,
and, for every $\theta\in \Theta$, we have
\begin{equation}\label{eq:data_ens_num}
A^\theta :=\left(
\begin{matrix}
0 & 1\\
\theta & 0
\end{matrix}
\right),\quad 
b_1 := \left(\begin{matrix}
1 \\
0
\end{matrix}
\right), \quad
b_2 := \left(\begin{matrix}
0 \\
1
\end{matrix}
\right).
\end{equation}
For every $N\geq 1$ and for every subset of 
parameters
$\{ \theta_1,\ldots,\theta_N \}\subset\Theta$, 
we represent the corresponding sub-ensemble
of \eqref{eq:ens_ctrl_num_exp} as an
affine-control system on $\R^{2N}$, as
done in Section~\ref{sec:grad_flow}.
More precisely, we consider
\begin{equation}\label{eq:ens_ctrl_exp_N}
\begin{cases}
\dot \xx_u (t) = \mathbf{A}^N\xx_u(t) +\mathbf{b}_1u_1(t)
+ \mathbf{b}_2u_2(t)
& \mbox{a.e. in }[0,1],\\
\xx_u(0) = \xx_0,
\end{cases}
\end{equation}
where $\mathbf{A}^N\in \R^{2N\times2N}$ and
$\mathbf{b}_1,\mathbf{b}_2\in \R^{2N}$ are 
defined as follows:
\begin{equation} \label{eq:data_ens_num_N}
\mathbf{A}^N := \left(
\begin{matrix}
A^{\theta_1} & \mathbf{0}_{2\times 2}&\ddots \\
\mathbf{0}_{2\times 2} & \ddots&\mathbf{0}_{2\times 2}\\
\ddots&\mathbf{0}_{2\times 2}&A^{\theta_N}
\end{matrix}
\right), \,\,
\mathbf{b}_1 := 
\left(
\begin{matrix}
b_1\\
\vdots\\
b_1
\end{matrix}
\right),  \,\,
\mathbf{b}_2 := 
\left(
\begin{matrix}
b_2\\
\vdots\\
b_2
\end{matrix}
\right).
\end{equation}
Moreover, 
we observe that \eqref{eq:ens_ctrl_num_exp} can be
interpreted as a control system in the space
$C^0(\Theta,\R^2)$. Indeed, we can consider the
control system
\begin{equation}\label{eq:ens_ctrl_Banach}
X_{u, t}
 = X_0 + \int_0^t \mathcal{A}[X_{u,\tau}] \, d\tau
+ \int_0^t \mathfrak{b}_1u_1(\tau)
+ \mathfrak{b}_2u_2(\tau)\, d\tau, \quad
t\in[0,1],
\end{equation} 
where  
$\mathcal{A}:C^0(\Theta,\R^2)\to C^0(\Theta,\R^2)$
is the bounded linear operator defined as 
\begin{equation*}
\mathcal{A}[Y](\theta) := A^\theta Y(\theta)
\end{equation*}
for every $\theta\in \Theta$ and for every
$Y\in C^0([0,1],\R^2)$, and 
$\mathfrak{b}_1,\mathfrak{b}_1:\Theta \to \R^2$ are 
defined as
\begin{equation*}
\mathfrak{b}_1(\theta) := b_1,\quad 
\mathfrak{b}_2(\theta) := b_2
\end{equation*}
for every $\theta\in \Theta$, and finally
$X_0:\Theta\to\R^2$ satisfies
$X_0(\theta):= x_0^\theta$ for every 
$\theta\in \Theta$.
The integrals in \eqref{eq:ens_ctrl_Banach}
should be understood in the Bochner sense,
and, for every $u\in \U$,
 the existence and uniqueness of
a continuous curve $t\mapsto X_{u,t}$ in
$C^0(\Theta,\R^2)$ solving \eqref{eq:ens_ctrl_Banach}
descends from classical
results in linear inhomogeneous 
ODEs in Banach spaces (see, e.g., 
\cite[Chapter~3]{DK74}).
In particular, from the uniqueness we deduce that
\begin{equation}\label{eq:sol_Bochn}
X_{u,t}(\theta) = x_u^\theta(t)
\end{equation}
for every $u\in\U$, $t\in [0,1]$ and 
$\theta\in\Theta$, where 
$x_u^\theta:[0,1]\to\R^2$ is the solution of
\eqref{eq:ens_ctrl_num_exp} corresponding to
the parameter $\theta$ and to the control $u$.
We now prove some controllability results for the 
control systems \eqref{eq:ens_ctrl_exp_N} and
\eqref{eq:ens_ctrl_Banach}.

\begin{proposition}\label{prop:ctrl_exp}
For every $N\geq 1$ and for every subset
$\{ \theta_1,\ldots,\theta_N \}\subset \Theta$,
let us consider 
$\mathbf{y}_{\mathrm{tar}}\in \R^{2N}$. Then,
there exists a control $\bar u\in \U$
such that the corresponding solution  
$\xx_{\bar u}:[0,1]\to\R^{2N}$
of \eqref{eq:ens_ctrl_exp_N} satisfies
$\xx_{\bar u}(1) = \mathbf{y}_{\mathrm{tar}}$.\\
Moreover, for every 
$Y_{\mathrm{tar}}\in C^0(\Theta,\R^2)$
and for every $\e>0$, there exists a control
$u_\e\in \U$ such that
the curve $t\mapsto X_{u_\e, t}$ that solves
\eqref{eq:ens_ctrl_Banach} satisfies
\[
||Y-X_{u_\e, 1}||_{C^0}\leq \e.
\]
\end{proposition}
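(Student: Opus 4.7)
My plan is to handle the two assertions separately: the finite-dimensional exact controllability via the Kalman rank test, and the infinite-dimensional approximate controllability via a Hahn--Banach duality argument combined with the Weierstrass approximation theorem.

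For the first assertion, I would note that \eqref{eq:ens_ctrl_exp_N} is a linear time-invariant system on $\R^{2N}$ with control matrix $[\mathbf{b}_1,\mathbf{b}_2]$, so exact controllability in time $1$ from $\xx_0$ to any $\mathbf{y}_{\mathrm{tar}}\in\R^{2N}$ is equivalent to the Kalman rank condition. To check it, I would exploit the identity $(A^\theta)^2=\theta I_{2\times 2}$, which yields block by block the closed forms
\[
(\mathbf{A}^N)^{2k}\mathbf{b}_1=(\theta_1^k b_1,\ldots,\theta_N^k b_1)^T,\quad
(\mathbf{A}^N)^{2k}\mathbf{b}_2=(\theta_1^k b_2,\ldots,\theta_N^k b_2)^T,
\]
for every $k\geq 0$. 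Since $\theta_1,\ldots,\theta_N$ are distinct (as they form a subset of $\Theta$), a Vandermonde determinant argument shows that the $2N$ vectors obtained by letting $k=0,\ldots,N-1$ are linearly independent in $\R^{2N}$. Hence the Kalman matrix has rank $2N$, and the existence of $\bar u\in\U$ steering $\xx_0$ to $\mathbf{y}_{\mathrm{tar}}$ in time $1$ follows from classical results on linear controllability (see, e.g., \cite{AS}).

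For the second assertion, I would rewrite the reachable set at time $t=1$ as $W_0+\mathcal{S}\subset C^0(\Theta,\R^2)$, where $W_0(\theta):=e^{A^\theta}x_0^\theta$ encodes the free evolution and
\[
\mathcal{S}:=\left\{\,\theta\mapsto\int_0^1 e^{A^\theta(1-s)}\bigl[b_1u_1(s)+b_2u_2(s)\bigr]\,ds\;:\;u\in\U\,\right\}
\]
is a linear subspace of $C^0(\Theta,\R^2)$. The task reduces to proving density of $\mathcal{S}$ in $C^0(\Theta,\R^2)$. By Hahn--Banach together with the Riesz representation theorem, it is enough to show that any pair of finite signed Borel measures $(\mu_1,\mu_2)$ on $\Theta$ that annihilates $\mathcal{S}$ is trivial. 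After swapping integration orders via Fubini and using the arbitrariness of $u\in L^2$, the vanishing condition becomes a family of identities in $s\in[0,1]$. Expanding $e^{A^\theta(1-s)}b_j$ in powers of $(1-s)$ and invoking once more $(A^\theta)^2=\theta I$, each Taylor coefficient translates into a moment identity of the form $\int_\Theta\theta^m\,d\mu_i(\theta)=0$ for every $m\geq 0$ and $i\in\{1,2\}$. The Weierstrass approximation theorem then forces $\mu_1=\mu_2=0$, concluding the density argument.

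I expect the main difficulty to lie in the bookkeeping of the second part: matching the Taylor coefficients in $s$ with the correct measure $\mu_i$ requires identifying precisely which component of $(A^\theta)^k b_j$ contributes to which coordinate. The explicit block structure exploited in the first part makes this transparent, so both halves of the proposition ultimately rest on the same algebraic observation $(A^\theta)^2=\theta I$.
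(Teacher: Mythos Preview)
Your argument is correct. For the first assertion you spell out what the paper summarizes as ``a direct computation'': both proofs verify the Kalman rank condition via the identity $(A^\theta)^2=\theta I_{2\times 2}$, and your Vandermonde argument (together with the observation that the $\theta_j$ are distinct, implicit in the paper's set notation) is precisely the computation the paper leaves to the reader.

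For the second assertion you take a genuinely different route. The paper invokes a controllability criterion of Triggiani \cite{T75}, which reduces approximate controllability of \eqref{eq:ens_ctrl_Banach} to the density of $\mathrm{span}\{\mathcal{A}^r[\mathfrak{b}_1],\mathcal{A}^r[\mathfrak{b}_2]:r\geq 0\}$ in $C^0(\Theta,\R^2)$; using once more $(A^\theta)^2=\theta I$, this span is identified with $\R^2$-valued polynomials in $\theta$, and Weierstrass finishes in one line. Your Hahn--Banach/moment argument is, in effect, a direct proof of the relevant instance of Triggiani's criterion: it is more self-contained and avoids an external reference, at the price of the coefficient bookkeeping you anticipate. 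Both routes rest on the same two ingredients---the relation $(A^\theta)^2=\theta I$ and Weierstrass approximation---so the difference is one of packaging rather than substance.
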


\begin{proof}
We observe that the first part of the thesis 
follows if we prove the exact controllability 
of the system \eqref{eq:ens_ctrl_exp_N}.
An elementary result in control theorey (see, e.g.,
\cite[Theorem~3.3]{AS}) ensures that the last
condition is implied by the identity
\begin{equation*} 
\mathrm{span}\left\{
(\mathbf{A}^N)^r\, \mathbf{b_1}, (\mathbf{A}^N)^r\,\mathbf{b_2}|\,\, 0\leq r\leq 2N-1
\right\} = \R^{2N}.
\end{equation*}
A direct computation shows that this is actually the 
case.

As regards the second part of the thesis, 
owing to \cite[Theorem~3.1.1]{T75}
we have that it is sufficient to prove that
\begin{equation} \label{eq:cond_control_banach}
\overline{
\mathrm{span}\left\{\mathcal{A}^r[\mathfrak{b}_1],
\mathcal{A}^r[\mathfrak{b}_2]| \,\, r\geq 0
 \right\}}^{C^0} = C^0(\Theta,\R^2).
\end{equation}
We observe that 
\[
\mathrm{span}\left\{\mathcal{A}^r[\mathfrak{b}_1],
\mathcal{A}^r[\mathfrak{b}_2]| \,\, r\geq 0
 \right\}
= 
\mathrm{span}\left\{
\left(
\begin{matrix}
\theta^r\\
0
\end{matrix}
\right),
\left(
\begin{matrix}
0\\
\theta^r
\end{matrix}
\right)
| \,\, r\geq 0
 \right\}, 
\]
therefore the identity \eqref{eq:cond_control_banach}
follows from the Weierstrass Theorem
on polynomial approximation.
\end{proof}

We now introduce the problem that we studied in the
numerical simulations. 
We set $\theta_{\min }=-\frac12,
\theta_{\max}=\frac12$, and we consider 
on $\Theta=[-\frac12,\frac12]$ the probability
measure $\mu$, distributed as a 
$\mathrm{Beta}(4,4)$ centered at $0$.
We observe that during the experiments we assumed to have no explicit knowledge of the probability measure $\mu$. On the other hand, we imagined to be able to sample observations from that distribution, and we pursued the \textit{data driven} approach described in Remark~\ref{rem:discr_prob_meas}.  
After that the approximated optimal control had been computed, we validated the policy just obtained on a testing sub-ensemble of \textit{newly-sampled} parameters. 
Let us assume that
the initial data in \eqref{eq:ens_ctrl_num_exp}
is not affected by the parameter $\theta$, i.e,
there exists $x_0\in \R^2$ such that 
$x_0^\theta = x_0$ for every $\theta\in\Theta$.
We imagine that we want to steer the
end-points of the trajectories
of \eqref{eq:ens_ctrl_num_exp} as close as
possible to a target point $y_{\mathrm{tar}}\in\R^2$. 
Therefore, we consider the functional
$\F:\U\to\R_+$ defined as 
\begin{equation}\label{eq:def_fun_ens_exp}
\F(u) := \int_\Theta |x_u^\theta(1) -y_{\mathrm{tar}} 
|_2^2 \, d\mu(\theta) + \frac{\beta}{2}||u||_{L^2}^2
\end{equation}
for every $u\in \U$.
We observe that the second part of 
Proposition~\ref{prop:ctrl_exp} implies that
we are in the situation described in 
Remark~\ref{rmk:role_beta}.
Indeed, if we set $Y_{\mathrm{tar}}(\theta) := 
y_{\mathrm{tar}}$ for every $\theta\in \Theta$,
we have that for every $\e>0$ there exists
$u_\e\in\U$ such that  
\begin{equation*}
\int_\Theta |x_{u_\e}^\theta(1) -y_{\mathrm{tar}} 
|_2^2 \, d\mu(\theta)
\leq ||X_{u_\e,1} -Y_{\mathrm{tar}}||_{C^0}
\leq \frac\e2,
\end{equation*}
where we used the identity \eqref{eq:sol_Bochn}.
Therefore, in correspondence of small values
of $\beta$, we expect that the minimizers of 
\eqref{eq:def_fun_ens_exp} drive the end-point 
of the controlled trajectories very close to
$y_{\mathrm{tar}}$. In the simulations we 
considered $\beta = 10^{-3}$.
Finally, we approximated the probability measure
$\mu$ with the empirical distribution $\mu_N$,
obtained with $N$ independent samplings of $\mu$,
using $N = 300$.
Moreover, we chose $x_0=(0,0)^T$ and 
$y_{\mathrm{tar}} = (-1,-1)^T$.
We report below the results obtained with 
Algorithm~\ref{alg:proj_grad_field}
and  Algorithm~\ref{alg:iter_PMP}, where
we set $M=64$. 
We observed that performances of the two 
numerical methods are very similar, as regards
both the qualitative aspect of the controlled 
trajectories and the decay of the cost during the
execution.

\begin{figure}
\centering
\includegraphics[scale=0.47]{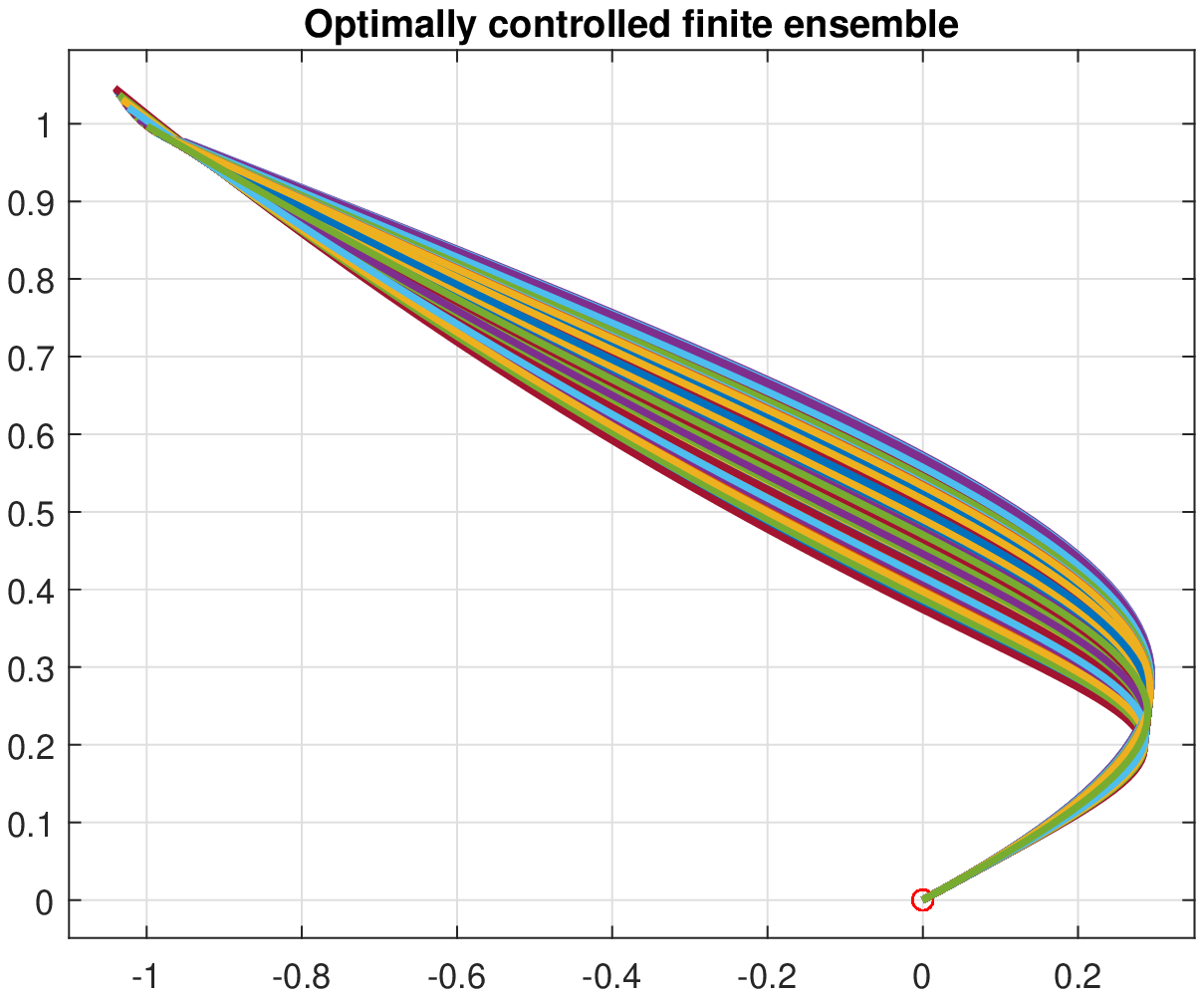}
\includegraphics[scale=0.47]{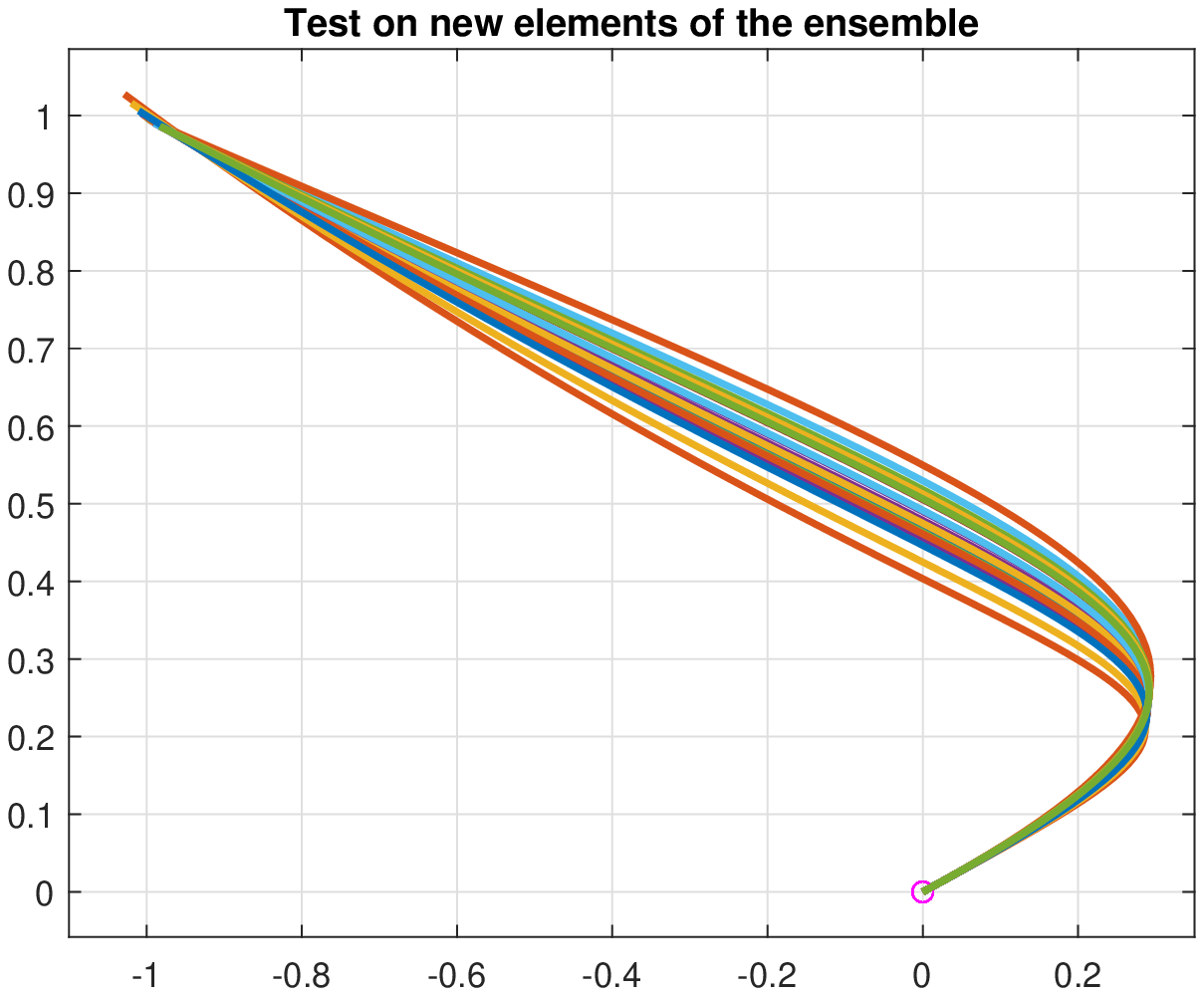}
\caption{Controlled ensemble. On the left, we 
reported the optimally controlled trajectories
of the sub-ensemble of $\Theta$ obtained by 
sampling $N=300$ parameters.
On the right, we tested the controls obtained 
before on a new sub-ensemble of $\Theta$, obtained
by sampling $20$ new parameters. As we can see, 
the trajectories belonging to
 the testing sub-ensemble
 are correctly steered to the 
target point $y_{\mathrm{tar}}=(-1,1)^T$.}
\end{figure}

\begin{figure}
\centering
\includegraphics[scale=0.47]{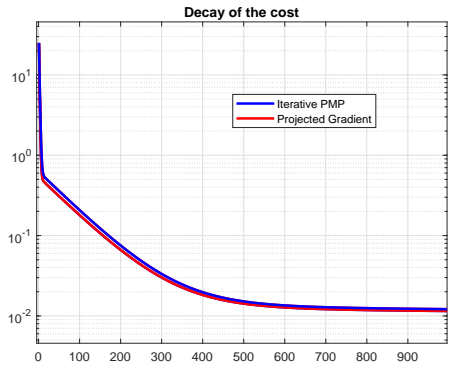}
\caption{In the graph we reported the decay of the 
discrete cost achieved by
 Algorithm~\ref{alg:proj_grad_field} 
(Projected Gradient)
 and Algorithm~\ref{alg:iter_PMP} (Iterative PMP).
As we can see, the performances on this problem
are very similar.}
\end{figure}

\end{section}

\section*{Conclusions}
In this paper we considered the problem of the
optimal control of an ensemble of affine-control
systems. We proved the well posedness of the 
corresponding minimization problem, and we 
showed with a $\Gamma$-convergence argument 
how we can reduce the original problem to
an approximated one, involving ensembles with
a finite number of elements. For these ones, 
in the case of end-point cost,
we proposed two numerical schemes for the 
approximation of the optimal control.
We finally tested the methods on a 
ensemble optimal control problem in dimension two.

For future development, we plan to study algorithms
also for more general costs, and not only for
terminal-state penalization.
Moreover, we hope to extend the $\Gamma$-convergence results to some proper class of ensembles of nonlinear-control systems. 
As well as in the affine-control case, we expect that weak topologies on the space of controls are required to have equi-coercivity of the functionals. On the other hand, the challenging aspect is that, in nonlinear-control systems, weakly convergent controls do not induce, in general, locally $C^0$-strongly convergent flows.

\appendix

\section{Auxiliary results of Subsection~\ref{subsec:mapp_Xu}}
\label{app:X_u}
Here we prove
some auxiliary properties of the mapping
$X_u:[0,1]\times \Theta\to\R^n$, 
which has been defined in
\eqref{eq:def_evol_ens} for every $u\in\U$.
Before proceeding, we recall a version of the
Gr\"onwall-Bellman inequality.

\begin{lemma}[Gr\"onwall-Bellman Inequality] 
\label{lem:Gron}
Let $f:[a,b]\to\R_+$ be a
non-negative continuous function
and let us assume that there exists
a constant $\alpha>0$ and a non-negative 
function $\beta\in L^1([a,b],\R_+)$
such that
\[
f(s) \leq \alpha + \int_a^s\beta(\tau)f(\tau) \,d\tau
\]
for every $s\in[a,b]$. Then, for every 
$s\in[a,b]$ the following inequality holds:
\begin{equation} \label{eq:Gron_ineq}
f(s) \leq \alpha e^{||\beta||_{L^1}}.
\end{equation}
\end{lemma}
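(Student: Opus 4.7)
The plan is to use the standard ``integrating factor'' trick. Define the auxiliary function
\[
g(s) := \alpha + \int_a^s \beta(\tau) f(\tau) \, d\tau
\]
for $s \in [a,b]$. By hypothesis we have $f(s) \leq g(s)$ pointwise on $[a,b]$. Moreover, since $\beta \in L^1([a,b],\R_+)$ and $f$ is continuous (hence bounded on $[a,b]$), the integrand $\beta f$ belongs to $L^1([a,b],\R_+)$, so $g$ is absolutely continuous with $g'(s) = \beta(s) f(s)$ for a.e.\ $s \in [a,b]$. Using $f \leq g$ and the non-negativity of $\beta$, this gives the differential inequality $g'(s) \leq \beta(s) g(s)$ a.e.

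Next I would multiply by the integrating factor $e^{-B(s)}$, where $B(s) := \int_a^s \beta(\tau)\,d\tau$, and observe that the function $s \mapsto g(s) e^{-B(s)}$ is itself absolutely continuous, with
\[
\frac{d}{ds}\bigl( g(s) e^{-B(s)} \bigr) = \bigl( g'(s) - \beta(s) g(s) \bigr) e^{-B(s)} \leq 0
\]
for a.e.\ $s \in [a,b]$. Hence $s \mapsto g(s) e^{-B(s)}$ is non-increasing on $[a,b]$, so
\[
g(s) e^{-B(s)} \leq g(a) e^{-B(a)} = \alpha
\]
for every $s \in [a,b]$, which yields $g(s) \leq \alpha\, e^{B(s)} \leq \alpha\, e^{\|\beta\|_{L^1}}$. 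Combining with $f(s) \leq g(s)$ proves \eqref{eq:Gron_ineq}.

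There is no real obstacle here; the only points requiring a bit of care are (i) justifying the absolute continuity of $g$ and the a.e.\ identity $g' = \beta f$ (which follows from the Lebesgue differentiation theorem applied to the $L^1$ function $\beta f$), and (ii) noting that multiplying the a.e.\ differential inequality by the non-negative factor $e^{-B(s)}$ preserves the inequality and that the product of two absolutely continuous functions is absolutely continuous, so the fundamental theorem of calculus legitimately yields the monotonicity conclusion. Everything else is routine.
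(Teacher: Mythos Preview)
Your argument is correct and complete; the integrating-factor approach you outline is the standard proof of the Gr\"onwall--Bellman inequality, and the care you take with absolute continuity (that $\beta f \in L^1$ so $g$ is AC, and that the product $g\,e^{-B}$ is AC with the expected derivative) is exactly what is needed in the $L^1$ setting.

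By contrast, the paper does not give a proof at all: it simply cites \cite[Theorem~5.1]{EK86}. So your proposal is strictly more informative than the paper's own treatment, supplying a self-contained derivation where the paper defers to an external reference. There is nothing to compare at the level of strategy; you have filled in what the paper left as a black box.
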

\begin{proof}
This result follows directly from
\cite[Theorem~5.1]{EK86}. 
\end{proof}

We first prove that for every
$u\in\U$ the mapping $X_u:[0,1]\times\Theta
\to \R^n$ is bounded.

\begin{lemma} \label{lem:bound_X}
For every $u\in\U$,
let $X_u:[0,1]\times\Theta \to \R^n$ be the
application defined in \eqref{eq:def_evol_ens}
collecting the trajectories of the ensemble of
control systems \eqref{eq:ens_ctrl_sys}.
Then, for every $R>0$ there exists 
$C_R>0$ such that, if $||u||_{L^2}\leq R$,
we have 
\begin{equation} \label{eq:bound_X}
|X_u(t,\theta)|_2\leq C_R,
\end{equation}
for  every $(t,\theta)\in[0,1]\times \Theta$.
\end{lemma}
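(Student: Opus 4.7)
The plan is to start from the integral form of the Cauchy problem \eqref{eq:ens_ctrl_sys}, namely
\[
x_u^\theta(t) = x_0^\theta + \int_0^t F_0^\theta(x_u^\theta(s))\,ds + \int_0^t F^\theta(x_u^\theta(s))\,u(s)\,ds,
\]
take Euclidean norms, and reduce everything to a scalar Grönwall-type inequality for $t \mapsto |x_u^\theta(t)|_2$.

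First I would control the integrands using the sub-linear growth estimates \eqref{eq:sub_lin_F_0}-\eqref{eq:sub_lin_F}, which are uniform in $\theta \in \Theta$. The drift term is bounded by $C(|x_u^\theta(s)|_2+1)$, while the controlled term satisfies
\[
|F^\theta(x_u^\theta(s))\,u(s)|_2 \leq \sum_{i=1}^k |F_i^\theta(x_u^\theta(s))|_2\, |u_i(s)| \leq C(|x_u^\theta(s)|_2+1)\,|u(s)|_1,
\]
so that together with the bound $|x_0^\theta|_2 \leq C'$ from \eqref{eq:bound_ens_init_val} we obtain
\[
|x_u^\theta(t)|_2 \leq C' + C\int_0^t (1+|u(s)|_1)\,ds + \int_0^t C(1+|u(s)|_1)\,|x_u^\theta(s)|_2\,ds.
\]
Using the inequality \eqref{eq:norm_ineq} together with the assumption $\|u\|_{L^2}\leq R$, I get $\|u\|_{L^1}\leq \sqrt k\,R$, which bounds the second term by $C(1+\sqrt k\,R)$ independently of $\theta$. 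Denoting this constant-in-$\theta$ quantity by $\alpha_R := C' + C(1+\sqrt k\,R)$ and setting $\beta(s) := C(1+|u(s)|_1) \in L^1([0,1],\R_+)$ with $\|\beta\|_{L^1}\leq C(1+\sqrt k\,R)$, the previous display is precisely the hypothesis of Lemma~\ref{lem:Gron}.

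Applying the Grönwall-Bellman inequality \eqref{eq:Gron_ineq} then yields
\[
|x_u^\theta(t)|_2 \leq \alpha_R\, e^{C(1+\sqrt k\,R)}
\]
for every $t \in [0,1]$ and every $\theta \in \Theta$, so that the right-hand side is the desired constant $C_R$, which depends only on $R$ (and on the structural constants $C,C',k$), but not on $t$, $\theta$, or the specific control $u$ with $\|u\|_{L^2}\leq R$. Since no step requires more than the sub-linear growth and Grönwall, no real obstacle is expected; the only point to watch is the uniformity in $\theta$, which is ensured by taking the supremum in $\theta$ inside \eqref{eq:sub_lin_F_0}-\eqref{eq:sub_lin_F} and in \eqref{eq:bound_ens_init_val} before invoking Grönwall.
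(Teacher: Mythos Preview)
Your proof is correct and follows essentially the same approach as the paper: both start from the integral form of \eqref{eq:ens_ctrl_sys}, apply the uniform sub-linear growth bounds \eqref{eq:sub_lin_F_0}--\eqref{eq:sub_lin_F} together with \eqref{eq:norm_ineq} and \eqref{eq:bound_ens_init_val}, and then invoke the Gr\"onwall--Bellman inequality (Lemma~\ref{lem:Gron}) to obtain a bound depending only on $R$. The only cosmetic difference is that you bound $|x_0^\theta|_2\leq C'$ before applying Gr\"onwall, whereas the paper carries $|x_0^\theta|_2$ through Gr\"onwall and invokes \eqref{eq:bound_ens_init_val} at the end.
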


\begin{proof}
Using \eqref{eq:def_evol_ens},
in virtue of the sub-linear growth
inequalities 
\eqref{eq:sub_lin_F_0}-\eqref{eq:sub_lin_F},
we observe 
that
%\begin{equation*}
%X_u(t,\theta) = x_u^\theta(t)=
%x^\theta_0 + \int_0^t \left(
%F_0^\theta(\xth_u(\tau)) + F^\theta(\xth_u(\tau))u(\tau)
%\right)\, d\tau.
%\end{equation*}
%we have that
\begin{align*}
|X_u(t,\theta)|_2=| x^\theta_u(t) |_2
&\leq | x^\theta_0 |_2
+ \int_0^t C(
1 +  |x^\theta_u(\tau)|_2
) (1 + |u(\tau)|_1) \, d\tau\\
&\leq | x^\theta_0 |_2 + C(1 + \sqrt k ||u||_{L^2})
+ \int_0^t C(1 + |u(\tau)|_1)|x^\theta_u(\tau)|_2
\,d\tau
\end{align*}
for every $\theta\in\Theta$ and $t\in[0,1]$.
Using Lemma~\ref{lem:Gron}, we deduce that
\begin{equation*}
|X(t,\theta)|_2 \leq 
\Big(
| x^\theta_0 |_2 + C(1 + \sqrt k ||u||_{L^2})
\Big)
e^{C\left(1+\sqrt k ||u||_{L^2}\right)}
\end{equation*}
for every $(t,\theta)\in[0,1]\times\Theta$.
Recalling the boundedness of $\theta\mapsto
\xth_0$ provided by
\eqref{eq:bound_ens_init_val}, the thesis follows
from the last inequality.
\end{proof}

We shall prove that, when the control $u$
varies in a bounded subset of $\U$,
the corresponding
 functions $X_u:[0,1]\times\Theta\to\R^n$ that captures the evolution
of the ensemble of control systems 
\eqref{eq:ens_ctrl_sys} are uniformly equi-continuous 
on their domain.
We first show \textit{separately}
the uniform equi-continuity for 
the variables in the time domain $[0,1]$ and in the
parameter domain $\Theta$. 
In the next result we observe that the 
trajectories of the ensemble are 
H\"older-continuous, uniformly with respect
to the parameter $\theta\in\Theta$.

\begin{lemma}\label{lem:unif_holder_time}
For every $u\in\U$,
let $X_u:[0,1]\times\Theta \to \R^n$ be the
application defined in \eqref{eq:def_evol_ens}
collecting the trajectories of the ensemble of
control systems \eqref{eq:ens_ctrl_sys}.
Then, for every $R>0$ there exists 
$L_R>0$ such that, if $||u||_{L^2}\leq R$, then
\begin{equation} \label{eq:unif_holder_time}
|X_u(t_1,\theta)-X_u(t_2,\theta)|_2
\leq L_R |t_1-t_2|^{\frac12}
\end{equation}
for every $t_1,t_2\in[0,1]$ and for every
$\theta\in\Theta$.
\end{lemma}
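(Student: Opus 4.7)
The plan is to express $X_u(t_1,\theta) - X_u(t_2,\theta)$ as an integral of the dynamics over $[t_2,t_1]$ (assume without loss of generality that $t_2 \leq t_1$), and then bound the integrand using the sub-linear growth estimates together with the uniform bound on trajectories provided by Lemma~\ref{lem:bound_X}. Concretely, from the Cauchy problem \eqref{eq:ens_ctrl_sys} I would write
\[
X_u(t_1,\theta) - X_u(t_2,\theta) = \int_{t_2}^{t_1} F_0^\theta(x_u^\theta(\tau))\,d\tau + \int_{t_2}^{t_1} F^\theta(x_u^\theta(\tau))\,u(\tau)\,d\tau.
\]

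Next I would exploit Lemma~\ref{lem:bound_X}: since $\|u\|_{L^2}\leq R$, there exists $C_R>0$ with $|x_u^\theta(\tau)|_2 \leq C_R$ for all $\tau,\theta$. Combined with the sub-linear bounds \eqref{eq:sub_lin_F_0}-\eqref{eq:sub_lin_F}, this yields $|F_0^\theta(x_u^\theta(\tau))|_2 \leq C(C_R+1)$ and $|F_i^\theta(x_u^\theta(\tau))|_2 \leq C(C_R+1)$ for $i=1,\ldots,k$, uniformly in $\tau$ and $\theta$. The first integral is therefore bounded by $C(C_R+1)|t_1-t_2|$, and since $|t_1-t_2|\leq 1$ we can dominate it by $C(C_R+1)|t_1-t_2|^{1/2}$.

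For the second integral, I would use $|F^\theta(x_u^\theta(\tau))u(\tau)|_2 \leq C(C_R+1)|u(\tau)|_1$ together with the Cauchy--Schwarz inequality and \eqref{eq:norm_ineq}:
\[
\int_{t_2}^{t_1} |u(\tau)|_1\,d\tau \leq |t_1-t_2|^{1/2}\Big(\int_{t_2}^{t_1}|u(\tau)|_1^2\,d\tau\Big)^{1/2} \leq \sqrt{k}\,R\,|t_1-t_2|^{1/2}.
\]
Putting the two contributions together and setting $L_R := C(C_R+1)(1+\sqrt{k}R)$ gives \eqref{eq:unif_holder_time}. This step is entirely straightforward: the only mildly delicate point is the Cauchy--Schwarz bound converting the $L^2$-control norm into a $|t_1-t_2|^{1/2}$ factor, which is precisely what forces the Hölder exponent $\tfrac{1}{2}$ rather than Lipschitz regularity in time.
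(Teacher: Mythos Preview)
Your proof is correct and follows essentially the same approach as the paper: both bound the time-derivative of the trajectory via Lemma~\ref{lem:bound_X} and the sub-linear growth estimates, and then convert the $L^2$-bound on the velocity into a $C^{0,1/2}$-bound via Cauchy--Schwarz. The only cosmetic difference is that the paper packages the Cauchy--Schwarz step as an $H^1$-boundedness statement and then invokes Proposition~\ref{prop:sob_Holder}, whereas you apply Cauchy--Schwarz directly to the integral representation; the two are the same argument.
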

\begin{proof}
Owing to Proposition~\ref{prop:sob_Holder}
and recalling that
$X_u(t,\theta)=\xth_u(t)$ for every $(t,\theta)
\in [0,1]\times\Theta$
by \eqref{eq:def_evol_ens}, we observe that
the thesis follows if we prove that
there exists a bounded 
subset of $H^1$ 
that includes the trajectories
$\{ x_u^\theta:[0,1]\to\R^n \}_{\theta\in\Theta}$
of \eqref{eq:ens_ctrl_sys}
for every admissible control $u\in\U$
satisfying $||u||_{L^2}\leq R$.

From Lemma~\ref{lem:bound_X} we obtain that
for every $R>0$ there exists
$C_R>0$ such that
\begin{equation} \label{eq:point_bound_traj}
|x_u^\theta(t)|_2\leq C_R
\end{equation}
for every $t\in[0,1]$ and for every
$u\in\U$ such that $||u||_{L^2}\leq R$. 
In virtue
of Lemma~\ref{lem:bound_X} and 
the sub-linear inequalities 
\eqref{eq:sub_lin_F_0}-\eqref{eq:sub_lin_F}, 
we deduce that
 for every $R>0$ there exists
$C_R'>0$ such that
\begin{equation*}
\sup_{\theta\in\Theta}|F_0^\theta(x_u^\theta(t))|_2\leq 
C_R', \qquad \sup_{\theta\in\Theta}
\,\sup_{i=1,\ldots,k}|F^\theta_i(x_u^\theta(t))|_2
\leq C_R'
\end{equation*}
for every $t\in[0,1]$ and for every
$u\in\U$ such that $||u||_{L^2}\leq R$.
Therefore, we have that
\begin{equation} \label{eq:point_bound_sob_der_traj}
|\dot x^\theta_u(t)|_2  \leq C'_R(1+|u(t)|_1)
\end{equation}
for every $t\in[0,1]$, for every
$\theta \in \Theta$ and for every
$u\in\U$ such that $||u||_{L^2}\leq R$. 
Combining \eqref{eq:point_bound_sob_der_traj} 
and \eqref{eq:point_bound_traj}, we deduce that
there exists $C''_R>0$ such that
\begin{equation*}
||x_u^\theta||_{H^1} \leq C''_R
\end{equation*} 
for every $\theta\in\Theta$ and for every
$u\in\U$ such that $||u||_{L^2}\leq R$.
The last inequality and 
Proposition~\ref{prop:sob_Holder} imply that
\begin{equation*}
|\xth_u(t_1) -\xth_u(t_2)|_2
\leq L_R|t_1-t_2|^{\frac12}
\end{equation*}
for every $t_1,t_2\in[0,1]$, for every
$\theta\in\Theta$ and for every
$u\in\U$ such that $||u||_{L^2}\leq R$, 
where we set $L_R:=\sqrt{C_R''}$.
This establishes \eqref{eq:unif_holder_time}.
\end{proof}

Before proceeding, we introduce the modulus of
continuity of the function
$x_0:\Theta\to\R^n$ defined in 
\eqref{eq:def_x0}.
Indeed, since $x_0:\Theta\to\R^n$ is a continuous
function defined on a compact domain, it is 
uniformly continuous, i.e., there exists
a non-decreasing function 
$\omega:\R_+\to\R_+$ satisfying
$0=\omega(0) = \lim_{r\to 0^+}\omega(r)$
and such that
\begin{equation} \label{eq:unif_cont_x0}
|x_0(\theta_1)-x_0(\theta_1)|_2 \leq 
\omega(|\theta_1-\theta_2|_2)
\end{equation}
for every $\theta_1,\theta_2\in\Theta$.

\begin{lemma} \label{lem:unif_cont_Theta_X}
For every $u\in\U$,
let $X_u:[0,1]\times\Theta \to \R^n$ be the
application defined in \eqref{eq:def_evol_ens}
collecting the trajectories of the ensemble of
control systems \eqref{eq:ens_ctrl_sys}.
Then, for every $R>0$ there exists 
$\omega_R:\R_+\to\R_+$
 such that, if $||u||_{L^2}\leq R$, then
\begin{equation} \label{eq:unif_cont_Theta_X}
|X_u(t,\theta_1)-X_u(t,\theta_2)|_2
\leq \omega_R(|\theta_1-\theta_2|_2)
\end{equation}
for every $t\in[0,1]$ and for every 
$ \theta_1,\theta_2 \in \Theta$, where
$\omega_R$ is a non-decreasing function
that satisfies 
$\omega(0)= \lim_{r\to0^+}\omega_R(r)=0$.
\end{lemma}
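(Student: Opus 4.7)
The plan is to fix two parameters $\theta_1,\theta_2\in\Theta$, set $\phi(t):=|x_u^{\theta_1}(t)-x_u^{\theta_2}(t)|_2$, and derive a Gr\"onwall-type inequality for $\phi$. Writing both trajectories in integral form and subtracting, I would estimate
\[
\phi(t) \leq |x_0(\theta_1)-x_0(\theta_2)|_2 + \int_0^t \bigl| F_0^{\theta_1}(x_u^{\theta_1}(s))-F_0^{\theta_2}(x_u^{\theta_2}(s))\bigr|_2\, ds + \int_0^t \bigl| F^{\theta_1}(x_u^{\theta_1}(s))-F^{\theta_2}(x_u^{\theta_2}(s))\bigr|_{\mathrm{op}} \, |u(s)|_1\, ds.
\]
For the first boundary term I would invoke the modulus of continuity $\omega$ of $x_0$ from \eqref{eq:unif_cont_x0}, and for the two integrals I would use the joint Lipschitz-continuity hypotheses \eqref{eq:lips_F_0}--\eqref{eq:lips_F} to split each integrand into a term bounded by $L\phi(s)$ plus a term bounded by $L|\theta_1-\theta_2|_2$ (using $|F^{\theta_1}(x)v - F^{\theta_2}(y)v|_2 \leq \sum_{i=1}^k |F_i^{\theta_1}(x)-F_i^{\theta_2}(y)|_2 |v_i|$).

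This produces an inequality of the form
\[
\phi(t) \leq \alpha(\theta_1,\theta_2) + \int_0^t L\bigl(1 + |u(s)|_1\bigr)\phi(s)\, ds,
\]
with
\[
\alpha(\theta_1,\theta_2) := \omega\bigl(|\theta_1-\theta_2|_2\bigr) + L\bigl(1+\|u\|_{L^1}\bigr)|\theta_1-\theta_2|_2.
\]
Since the weight $s\mapsto L(1+|u(s)|_1)$ has $L^1$-norm at most $L(1+\sqrt{k}\,\|u\|_{L^2}) \leq L(1+\sqrt{k}R)$ by \eqref{eq:norm_ineq}, I would apply the Gr\"onwall--Bellman inequality (Lemma~\ref{lem:Gron}) to obtain
\[
\phi(t) \leq \alpha(\theta_1,\theta_2)\, e^{L(1+\sqrt{k}R)}
\]
uniformly in $t\in[0,1]$. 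Defining
\[
\omega_R(r) := \bigl[\omega(r) + L(1+\sqrt{k}R)\, r\bigr]\, e^{L(1+\sqrt{k}R)},
\]
which is non-decreasing and satisfies $\omega_R(0)=\lim_{r\to 0^+}\omega_R(r)=0$ because $\omega$ does, yields \eqref{eq:unif_cont_Theta_X}.

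There is no real obstacle here beyond being careful with the matrix-versus-vector estimates for the controlled term; the crucial inputs are the \emph{joint} Lipschitz-continuity of $F_0,F$ in $(x,\theta)$, the uniform continuity of $\theta\mapsto x_0^\theta$, and the uniform $L^1$-bound on controls coming from the bound on $\|u\|_{L^2}$. No use of Lemma~\ref{lem:bound_X} is needed directly, since the Lipschitz constants absorb the dependence on $x$ into $\phi(s)$, which is exactly what Gr\"onwall handles.
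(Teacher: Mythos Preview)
Your proposal is correct and follows essentially the same argument as the paper: subtract the two integral equations, use the joint Lipschitz bounds \eqref{eq:lips_F_0}--\eqref{eq:lips_F} together with the modulus of continuity \eqref{eq:unif_cont_x0} of $x_0$, and apply Gr\"onwall (Lemma~\ref{lem:Gron}) with the $L^1$-bound from \eqref{eq:norm_ineq}. The resulting $\omega_R(r)=e^{L(1+\sqrt{k}R)}\bigl[\omega(r)+L(1+\sqrt{k}R)r\bigr]$ matches the paper's exactly.
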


\begin{proof}
Recalling  \eqref{eq:def_evol_ens}, we compute
\begin{align*}
|X_u(t,\theta_1)-X_u(t,\theta_2)|_2 &=
|x_u^{\theta_1}(t) - x_u^{\theta_2}(t)|_2\\
&\leq |x_0(\theta_1)-x_0(\theta_2)|_2
+ \int_0^t |F_0^{\theta_1}(x_u^{\theta_1}(\tau))
-F_0^{\theta_2}(x_u^{\theta_2}(\tau))|_2 \,d\tau\\
&\qquad +
\int_0^t \sum_{i=1}^k \left(
|F^{\theta_1}_i(x_u^{\theta_1}(\tau))
-F^{\theta_2}_i(x_u^{\theta_2}(\tau))|_2 
|u_i(\tau)| \right)
d\tau
\end{align*}
for every $t\in[0,1]$, for every $\theta_1,\theta_2
\in\Theta$ and for every $u\in\U$.
Using \eqref{eq:ens_fields} and the
Lipschitz-continuity conditions 
\eqref{eq:lips_F_0}-\eqref{eq:lips_F}, the last
expression yields
\begin{align*}
|x_u^{\theta_1}(t)-x_u^{\theta_2}(t)|_2
&\leq |x_0(\theta_1)-x_0(\theta_2)|_2\\
& \qquad
+ \int_0^t L(1+|u(\tau)|_1)\left(|x_u^{\theta_1}(\tau)-
x_u^{\theta_2}(\tau)|_2 + |\theta_1-\theta_2|_2
\right) d\tau\\
&\leq |x_0(\theta_1)-x_0(\theta_2)|_2
+L(1+\sqrt{k}||u||_{L^2})|\theta_1-\theta_2|_2\\
&\qquad + \int_0^t L
(1+|u(\tau)|_1)(|x_u^{\theta_1}(\tau)-
x_u^{\theta_2}(\tau)|_2) \,d\tau
\end{align*} 
for every $t\in [0,1]$, for every
$\theta_1,\theta_2\in\Theta$ and for
every $u\in\U$.
Owing to Lemma~\ref{lem:Gron}, from the
last inequality we deduce that
%\begin{equation} \label{eq:2t_tr_ineq_unif_X}
%|X_u(t,\theta_1)-X_u(t,\theta_2)|_2 =
%|x_u^{\theta_1}(t)-x_u^{\theta_2}(t)|_2
%\leq \omega_R(|\theta_1-\theta_2|_2),
%\end{equation}
\eqref{eq:unif_cont_Theta_X} holds
for every $t\in[0,1]$, for every 
$\theta_1,\theta_2\in\Theta$ and for
every $u\in \U$ with $||u||_{L^2}\leq R$,
where the
function $\omega_R:\R_+\to\R_+$ is defined as follows:
\begin{equation*}
\omega_R(r):= e^{L\left(1 + \sqrt k R\right)}
\left( \omega(r) + L(1+\sqrt{k} R )r
\right),
\end{equation*} 
and $\omega:\R_+\to\R_+$ is a modulus of
continuity for the mapping $x_0:\Theta\to\R^n$
(see \eqref{eq:unif_cont_x0}).
\end{proof}

We are now in position of stating the uniform
equi-continuity result.

\begin{lemma} \label{lem:unif_cont_X}
For every $u\in\U$,
let $X_u:[0,1]\times\Theta \to \R^n$ be the
application defined in \eqref{eq:def_evol_ens}
collecting the trajectories of the ensemble of
control systems \eqref{eq:ens_ctrl_sys}.
Then, for every $R>0$ there exists 
$L_R>0$ and $\omega_R:\R_+\to\R_+$
 such that, if $||u||_{L^2}\leq R$, then
\begin{equation} \label{eq:unif_cont_X}
|X_u(t_1,\theta_1)-X_u(t_2,\theta_2)|_2
\leq L_R|t_1-t_2|^{\frac12} +
\omega_R(|\theta_1-\theta_2|_2)
\end{equation}
for every $(t_1,\theta_1),(t_2,\theta_2)
\in [0,1]\times \Theta$, where
$\omega_R$ is a non-decreasing function
satisfying
$\omega(0)= \lim_{r\to0^+}\omega_R(r)=0$.
\end{lemma}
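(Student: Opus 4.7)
The plan is to obtain the joint modulus of continuity by combining the two previous lemmas via the triangle inequality. Specifically, fix $R>0$ and an admissible control $u\in\U$ with $\|u\|_{L^2}\le R$. Given $(t_1,\theta_1),(t_2,\theta_2)\in[0,1]\times\Theta$, I would insert the intermediate point $(t_1,\theta_2)$ (or equivalently $(t_2,\theta_1)$) and write
\[
|X_u(t_1,\theta_1)-X_u(t_2,\theta_2)|_2 \le |X_u(t_1,\theta_1)-X_u(t_1,\theta_2)|_2 + |X_u(t_1,\theta_2)-X_u(t_2,\theta_2)|_2.
\]

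For the first term on the right-hand side, I would invoke Lemma~\ref{lem:unif_cont_Theta_X}, which, for controls with $\|u\|_{L^2}\le R$, yields a non-decreasing modulus $\omega_R:\R_+\to\R_+$ with $\omega_R(0)=\lim_{r\to 0^+}\omega_R(r)=0$ such that the first term is bounded by $\omega_R(|\theta_1-\theta_2|_2)$ uniformly in $t_1\in[0,1]$. For the second term, I would apply Lemma~\ref{lem:unif_holder_time}, which provides a constant $L_R>0$ such that the second term is bounded by $L_R|t_1-t_2|^{1/2}$ uniformly in $\theta_2\in\Theta$. Adding these two estimates gives precisely \eqref{eq:unif_cont_X}.

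Since both the constant $L_R$ and the function $\omega_R$ depend only on $R$ (and on the data of the ensemble, through the Lipschitz constant $L$, the sub-linear growth constant $C$, the bound $C'$ on the initial datum, and the modulus of continuity of $x_0$), the estimate \eqref{eq:unif_cont_X} holds uniformly over all $u\in\U$ satisfying $\|u\|_{L^2}\le R$. I do not anticipate any real obstacle here: the lemma is purely a bookkeeping consequence of the two preceding lemmas, and the only point to verify is that the moduli produced in Lemmas~\ref{lem:unif_holder_time} and~\ref{lem:unif_cont_Theta_X} are genuinely uniform in the second variable, which is clear from their statements.
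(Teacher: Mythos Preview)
Your proposal is correct and matches the paper's own proof exactly: the paper simply states that \eqref{eq:unif_cont_X} follows from the triangle inequality combined with Lemma~\ref{lem:unif_holder_time} and Lemma~\ref{lem:unif_cont_Theta_X}, which is precisely the argument you give.
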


\begin{proof}
The thesis \eqref{eq:unif_cont_X} follows directly
from the triangular inequality and from
Lemma~\ref{lem:unif_holder_time} and
Lemma~\ref{lem:unif_cont_Theta_X}.
\end{proof}

\section{Auxiliary results of Subsection~\ref{subsec:Lambda}}

Here we establish some auxiliary results concerning the mapping $\Lambda_u$ defined in \eqref{eq:def_Lambda}.
We use the same scheme used in Appendix~\ref{app:X_u}, and we first show that $\Lambda_u$ is bounded.

\begin{lemma}
\label{lem:bound_Lambda}
Let us assume that the mappings 
$(x,\theta)\mapsto 
\frac{\partial}{\partial x}F_i(x,\theta)$ 
are continuous for every $i=0,\ldots,k$, as well as
the gradient $(x,\theta)\mapsto \nabla_x a(x,\theta)$.
For every $u\in\U$,
let $\Lambda_u:[0,1]\times\Theta \to (\R^n)^*$ be the
application defined in \eqref{eq:def_Lambda}.
Then, for every $R>0$ there exists 
$C_R>0$ such that, if $||u||_{L^2}\leq R$,
we have 
\begin{equation} \label{eq:bound_Lambda}
|\Lambda_u(t,\theta)|_2\leq C_R,
\end{equation}
for  every $(t,\theta)\in[0,1]\times \Theta$.
\end{lemma}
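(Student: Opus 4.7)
The plan is to mimic the strategy of Lemma~\ref{lem:bound_X}, namely first to localize the trajectories in a compact set, then to bound the coefficients of the linear ODE \eqref{eq:def_lambda_theta_u} uniformly, and finally to apply a backward Grönwall argument starting from the terminal datum.

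First, I would invoke Lemma~\ref{lem:bound_X} to produce a constant $C_R>0$ such that $|X_u(t,\theta)|_2\leq C_R$ for every $(t,\theta)\in[0,1]\times\Theta$ and every $u\in\U$ with $\|u\|_{L^2}\leq R$. Setting $K_R:=\{x\in\R^n:|x|_2\leq C_R\}$, the trajectories of the ensemble stay in the compact set $K_R$. Since, by hypothesis, the maps $(x,\theta)\mapsto \frac{\partial F_i(x,\theta)}{\partial x}$ for $i=0,\ldots,k$ and $(x,\theta)\mapsto \nabla_x a(x,\theta)$ are continuous on $\R^n\times\Theta$, their restrictions to the compact set $K_R\times\Theta$ are bounded: there exist $M_R>0$ and $D_R>0$ such that
\[
\sup_{i=0,\ldots,k}\left|\frac{\partial F_i(x,\theta)}{\partial x}\right|_2\leq M_R,\qquad |\nabla_x a(x,\theta)|_2\leq D_R
\]
for every $(x,\theta)\in K_R\times\Theta$. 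In particular, the terminal datum in \eqref{eq:def_lambda_theta_u} satisfies $|\lambda_u^\theta(1)|_2=|\nabla_x a(x_u^\theta(1),\theta)|_2\leq D_R$ for every $\theta\in\Theta$ and every admissible $u$ with $\|u\|_{L^2}\leq R$.

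Next, I would integrate \eqref{eq:def_lambda_theta_u} backward from $t=1$ to a generic $t\in[0,1]$, obtaining
\[
\lambda_u^\theta(t)=\lambda_u^\theta(1)+\int_t^1\lambda_u^\theta(s)\left(\frac{\partial F_0(x_u^\theta(s),\theta)}{\partial x}+\sum_{i=1}^k u_i(s)\frac{\partial F_i(x_u^\theta(s),\theta)}{\partial x}\right)ds.
\]
Taking Euclidean norms and using the bounds above together with $|u(s)|_1\leq \sqrt{k}|u(s)|_2$ and \eqref{eq:norm_ineq}, I get
\[
|\lambda_u^\theta(t)|_2\leq D_R+\int_t^1 M_R\big(1+|u(s)|_1\big)|\lambda_u^\theta(s)|_2\,ds.
\]
A change of variable $s\mapsto 1-s$ recasts this as a forward Grönwall-type inequality, to which Lemma~\ref{lem:Gron} applies with $\alpha=D_R$ and $\beta(s)=M_R(1+|u(1-s)|_1)$. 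Since $\|\beta\|_{L^1}\leq M_R(1+\sqrt{k}\,R)$, I conclude
\[
|\Lambda_u(t,\theta)|_2=|\lambda_u^\theta(t)|_2\leq D_R\,\exp\!\big(M_R(1+\sqrt{k}\,R)\big)
\]
for every $(t,\theta)\in[0,1]\times\Theta$, which is the desired bound \eqref{eq:bound_Lambda} with $C_R:=D_R\exp(M_R(1+\sqrt{k}\,R))$.

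No genuine obstacle arises; the only care needed is that the pre-factor in the Grönwall inequality must be extracted from the \emph{continuity} of $\frac{\partial F_i}{\partial x}$ on $K_R\times\Theta$ (a compact set), since a global bound on $\R^n\times\Theta$ is not assumed. This is precisely the role played by Lemma~\ref{lem:bound_X} in confining the trajectories to $K_R$ beforehand.
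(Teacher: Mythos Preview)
Your proof is correct and follows essentially the same route as the paper: bound the terminal datum via continuity of $\nabla_x a$ on a compact set, write the backward integral form of \eqref{eq:def_lambda_theta_u}, and apply Gr\"onwall. The only cosmetic difference is that the paper bounds the Jacobians $\frac{\partial F_i}{\partial x}$ directly by the global Lipschitz constant $L$ from \eqref{eq:lips_F_0}--\eqref{eq:lips_F}, whereas you extract a bound $M_R$ from continuity on the compact set $K_R\times\Theta$; both are valid here and lead to the same conclusion.
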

\begin{proof}
We preliminarily observe that, 
owing to Lemma~\ref{lem:bound_X} and the continuity of 
$\nabla_x a$, there exists 
$C_R^1>0$ such that
\begin{equation*}
\sup_{\theta\in \Theta}|\nabla_x a(X_u(t,\theta),\theta)|_2\leq C_R^1.
\end{equation*}
From the definition of $\Lambda_u:[0,1]\times\Theta
\to (\R^n)^*$ in \eqref{eq:def_Lambda},
it follows that
\begin{equation*}
\begin{split}
|\Lambda_u(t,\theta)|_2 = |\lambda_u^\theta(t)|_2
&\leq |\lambda_u^\theta(1)|_2 + \int_t^1
|\lambda_u^\theta(\tau)|_2 \left|
\frac{\partial F_0(x_u^\theta(\tau), \theta)}{\partial x}
+ \sum_{i=1}^ku_i(\tau) 
\frac{\partial F_i(x_u^\theta(\tau), \theta)}{\partial x}
\right|_2\, d\tau\\
& \leq
|\lambda_u^\theta(1)|_2 + \int_t^1
|\lambda_u^\theta(\tau)|_2 L(1+|u(\tau)|_1)\, d\tau
\end{split}
\end{equation*}
for every $(t,\theta)\in [0,1]\times \Theta$, where
we used \eqref{eq:lips_F_0}-\eqref{eq:lips_F} in the
last passage. Finally, combining
the previous inequality with Lemma~\ref{lem:Gron} and
\eqref{eq:norm_ineq}, we deduce \eqref{eq:bound_Lambda}. 
\end{proof}

In the next lemma we show that $\Lambda_u$ is H\"older-continuous in time.

\begin{lemma}\label{lem:unif_holder_time_Lambda}
Let us assume that the mappings 
$(x,\theta)\mapsto 
\frac{\partial}{\partial x}F_i(x,\theta)$ 
are continuous for every $i=0,\ldots,k$, as well as
the gradient $(x,\theta)\mapsto \nabla_x a(x,\theta)$.
For every $u\in\U$,
let $\Lambda_u:[0,1]\times\Theta \to (\R^n)^*$ be the
application defined in \eqref{eq:def_Lambda}.
Then, for every $R>0$ there exists 
$L_R>0$ such that, if $||u||_{L^2}\leq R$, then
\begin{equation} \label{eq:unif_holder_time_Lambda}
|\Lambda_u(t_1,\theta)-\Lambda_u(t_2,\theta)|_2
\leq L_R |t_1-t_2|^{\frac12}
\end{equation}
for every $t_1,t_2\in[0,1]$ and for every
$\theta\in\Theta$.
\end{lemma}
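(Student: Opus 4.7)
The plan is to mimic the argument used for $X_u$ in Lemma~\ref{lem:unif_holder_time}: namely, to show that the curves $\{\lambda_u^\theta\}_{\theta\in\Theta, \|u\|_{L^2}\le R}$ lie in a bounded subset of $H^1([0,1],(\R^n)^*)$, and then invoke Proposition~\ref{prop:sob_Holder} to conclude the uniform $\tfrac12$-Hölder estimate.

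First I would set up the uniform bound on the coefficient of the ODE \eqref{eq:def_lambda_theta_u}. By Lemma~\ref{lem:bound_X}, whenever $\|u\|_{L^2}\le R$ the trajectories $x_u^\theta(t)$ stay inside a fixed compact set $K_R\subset\R^n$, uniformly in $(t,\theta)\in[0,1]\times\Theta$. Since $(x,\theta)\mapsto \tfrac{\partial}{\partial x}F_i(x,\theta)$ is continuous on $\R^n\times\Theta$ for every $i=0,\ldots,k$, it is bounded on the compact set $K_R\times\Theta$; hence there exists $C=C_R>0$ such that
\begin{equation*}
\sup_{\theta\in\Theta}\,\sup_{t\in[0,1]}\left|\frac{\partial F_i(x_u^\theta(t),\theta)}{\partial x}\right|\le C,\qquad i=0,\ldots,k,
\end{equation*}
for every $u\in\U$ with $\|u\|_{L^2}\le R$.

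Next I would estimate $\dot \lambda_u^\theta$ in $L^2$. Combining the previous bound with Lemma~\ref{lem:bound_Lambda}, which provides $|\lambda_u^\theta(t)|_2\le C_R$ uniformly in $(t,\theta)$ and in $\|u\|_{L^2}\le R$, the defining ODE yields
\begin{equation*}
|\dot \lambda_u^\theta(t)|_2\le |\lambda_u^\theta(t)|_2\cdot C\bigl(1+|u(t)|_1\bigr)\le C_R\cdot C\bigl(1+|u(t)|_1\bigr)
\end{equation*}
for a.e.\ $t\in[0,1]$. Squaring and integrating, and using the elementary inequality $(1+s)^2\le 2(1+s^2)$ together with \eqref{eq:norm_ineq}, we obtain a constant $\tilde C_R>0$ (depending only on $R$) such that
\begin{equation*}
\|\dot \lambda_u^\theta\|_{L^2}^2\le \tilde C_R\bigl(1+\|u\|_{L^2}^2\bigr)\le \tilde C_R(1+R^2)
\end{equation*}
for every $\theta\in\Theta$ and every $u\in\U$ with $\|u\|_{L^2}\le R$.

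Finally, Proposition~\ref{prop:sob_Holder} applied to the $H^1$-function $\lambda_u^\theta$ yields
\begin{equation*}
|\lambda_u^\theta(t_1)-\lambda_u^\theta(t_2)|_2\le \|\dot \lambda_u^\theta\|_{L^2}|t_1-t_2|^{\frac12}\le L_R|t_1-t_2|^{\frac12}
\end{equation*}
for every $t_1,t_2\in[0,1]$, every $\theta\in\Theta$, and every $u\in\U$ with $\|u\|_{L^2}\le R$, where $L_R:=\sqrt{\tilde C_R(1+R^2)}$. Recalling that $\Lambda_u(t,\theta)=\lambda_u^\theta(t)$, this is exactly \eqref{eq:unif_holder_time_Lambda}. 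The main (very mild) obstacle is verifying the uniform bound on $\partial_x F_i(x_u^\theta(\cdot),\theta)$, which relies crucially on the uniform confinement of the trajectories in a compact set ensured by Lemma~\ref{lem:bound_X}; everything else is a direct transposition of the scheme used for $X_u$.
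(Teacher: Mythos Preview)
Your proof is correct and follows essentially the same route as the paper's: bound $\lambda_u^\theta$ in $H^1$ via Lemma~\ref{lem:bound_Lambda} and a pointwise estimate on $\dot\lambda_u^\theta$, then invoke Proposition~\ref{prop:sob_Holder}. The only cosmetic difference is that you bound $|\partial_x F_i(x_u^\theta(t),\theta)|$ by a compactness argument (continuity on $K_R\times\Theta$), whereas the paper simply uses the global Lipschitz constant $L$ from \eqref{eq:lips_F_0}--\eqref{eq:lips_F}; both are valid and lead to the same conclusion.
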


\begin{proof}
We recall that by the definition \eqref{eq:def_Lambda}
we have 
$\Lambda_u(\cdot,\theta) = \lambda_u^\theta(\cdot)$,
for every $\theta\in \Theta$, where 
$\lambda_u^\theta:[0,1]\to(\R^n)^*$ solves 
the linear differential equation \eqref{eq:def_lambda_theta_u}.
Therefore, 
we employ the same strategy as in the proof of
Lemma~\ref{lem:unif_holder_time}, i.e., we show
that there exists a bounded 
subset of $H^1$ 
that includes the family of curves
$\{ \lambda_u^\theta:[0,1]\to(\R^n)^* \}_{\theta\in\Theta}$
for every admissible control $u\in\U$
satisfying $||u||_{L^2}\leq R$.
From Lemma~\ref{lem:bound_Lambda} it descends that
there exists $C_R>0$ such that
\begin{equation} \label{eq:bound_lambda_H1_1}
|\lambda_u^\theta(t)|_2 = |\Lambda_u(t,\theta)|_2
\leq C_R
\end{equation}
for every $t\in[0,1]$ and $\theta\in \Theta$.
On the other hand, we compute
\begin{equation} \label{eq:bound_lambda_H1_2}
\begin{split}
|\dot \lambda_u^\theta(t)|_2 &\leq
|\lambda_u^\theta(t)|_2 
\left|
\frac{\partial F_0(x_u^\theta(t), \theta)}{\partial x}
+ \sum_{i=1}^ku_i(t) 
\frac{\partial F_i(x_u^\theta(t), \theta)}{\partial x}
\right|_2\\
& \leq C_R L (1 + |u(t)|_1)
\end{split}
\end{equation}
for a.e. $t\in [0,1]$ and for every 
$\theta \in \Theta$.
Hence, combining \eqref{eq:bound_lambda_H1_1}-\eqref{eq:bound_lambda_H1_2} with
\eqref{eq:norm_ineq} and 
Proposition~\ref{prop:sob_Holder}, we deduce
\eqref{eq:unif_holder_time_Lambda}.
\end{proof}

In the following result we prove the uniform continuity of $\Lambda_u:[0,1]\times \Theta\to (\R^n)^*$ with respect to the second variable.

\begin{lemma} \label{lem:unif_cont_Theta_Lambda}
Let us assume that the mappings 
$(x,\theta)\mapsto 
\frac{\partial}{\partial x}F_i(x,\theta)$ 
are continuous for every $i=0,\ldots,k$, as well as
the gradient $(x,\theta)\mapsto \nabla_x a(x,\theta)$.
For every $u\in\U$,
let $\Lambda_u:[0,1]\times\Theta \to (\R^n)^*$ be the
application defined in \eqref{eq:def_Lambda}.
Then, for every $R>0$ there exists 
$\omega_R:\R_+\to\R_+$
 such that, if $||u||_{L^2}\leq R$, then
\begin{equation} \label{eq:unif_cont_Theta_Lambda}
|\Lambda_u(t,\theta_1)- \Lambda_u (t,\theta_2)|_2
\leq \omega_R(|\theta_1-\theta_2|_2)
\end{equation}
for every $t\in[0,1]$ and for every 
$ \theta_1,\theta_2 \in \Theta$, where
$\omega_R$ is a non-decreasing function
that satisfies 
$\omega(0)= \lim_{r\to0^+}\omega_R(r)=0$.
\end{lemma}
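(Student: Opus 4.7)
The plan is to mimic Lemma~\ref{lem:unif_cont_Theta_X} but for the backward ODE \eqref{eq:def_lambda_theta_u}, substituting (i) the Lipschitz hypotheses on $F_0,F$ with the uniform continuity of $\nabla_x a$ and of the $\partial_x F_i$ on an $R$-dependent compact set (furnished by Lemma~\ref{lem:bound_X}), and (ii) the initial-data continuity of $x_0$ with a terminal-data estimate obtained by combining the continuity of $\nabla_x a$ with Lemma~\ref{lem:unif_cont_Theta_X} applied at time $t=1$.

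First, I would rewrite \eqref{eq:def_lambda_theta_u} in integral form, set
\[
M_u(s,\theta):=\frac{\partial F_0(x_u^\theta(s),\theta)}{\partial x}+\sum_{i=1}^k u_i(s)\frac{\partial F_i(x_u^\theta(s),\theta)}{\partial x},
\]
subtract the identities corresponding to $\theta_1,\theta_2\in\Theta$, and add/subtract the cross term $\lambda_u^{\theta_1}(s)M_u(s,\theta_2)$ inside the integrand. This produces
\[
|\lambda_u^{\theta_1}(t)-\lambda_u^{\theta_2}(t)|_2\leq A+\int_t^1|\lambda_u^{\theta_1}(s)-\lambda_u^{\theta_2}(s)|_2\,|M_u(s,\theta_2)|_2\,ds+\int_t^1|\lambda_u^{\theta_1}(s)|_2\,|M_u(s,\theta_1)-M_u(s,\theta_2)|_2\,ds,
\]
where $A:=|\nabla_x a(x_u^{\theta_1}(1),\theta_1)-\nabla_x a(x_u^{\theta_2}(1),\theta_2)|_2$.

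Next, I would estimate the three quantities with $R$-dependent constants only. By Lemma~\ref{lem:bound_X} the trajectories $x_u^\theta(\cdot)$ all lie in a compact set $K_R\subset\R^n$; hence $\nabla_x a$ and each $\partial_x F_i$ are uniformly continuous on $K_R\times\Theta$ with some common moduli $\eta_R$ and $\zeta_R$. Writing $\delta:=\omega_R^X(|\theta_1-\theta_2|_2)+|\theta_1-\theta_2|_2$, where $\omega_R^X$ is the modulus supplied by Lemma~\ref{lem:unif_cont_Theta_X}, I obtain $A\leq \eta_R(\delta)$ and $|M_u(s,\theta_1)-M_u(s,\theta_2)|_2\leq \zeta_R(\delta)(1+|u(s)|_1)$. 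The Lipschitz bounds \eqref{eq:lips_F_0}--\eqref{eq:lips_F} entail $|M_u(s,\theta_2)|_2\leq L(1+|u(s)|_1)$, while Lemma~\ref{lem:bound_Lambda} provides $|\lambda_u^{\theta_1}(s)|_2\leq C_R$. Since \eqref{eq:norm_ineq} gives $\int_0^1(1+|u(s)|_1)\,ds\leq 1+\sqrt k\,R$, setting $f(t):=|\lambda_u^{\theta_1}(t)-\lambda_u^{\theta_2}(t)|_2$ the previous chain reduces to
\[
f(t)\leq \eta_R(\delta)+C_R\zeta_R(\delta)(1+\sqrt k\,R)+\int_t^1 f(s)\,L(1+|u(s)|_1)\,ds.
\]

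Finally, the time-reversal $g(r):=f(1-r)$ converts the backward integral into a forward one with unchanged $L^1$-norm of its coefficient, so that Lemma~\ref{lem:Gron} yields
\[
f(t)\leq \bigl[\eta_R(\delta)+C_R\zeta_R(\delta)(1+\sqrt k\,R)\bigr]e^{L(1+\sqrt k\,R)}
\]
uniformly in $t\in[0,1]$. Defining $\omega_R(r)$ as the right-hand side evaluated at $|\theta_1-\theta_2|_2=r$ delivers a non-decreasing modulus with $\omega_R(0^+)=0$ depending only on $R$, which is exactly \eqref{eq:unif_cont_Theta_Lambda}. The main obstacle I anticipate is keeping $\omega_R$ independent of the specific control $u$ inside the ball $\{\|u\|_{L^2}\leq R\}$: this forces me to factor $(1+|u(s)|_1)$ out of every difference so that its $L^1$-norm is uniformly controlled through \eqref{eq:norm_ineq}, and it is precisely Lemma~\ref{lem:bound_X} that makes possible the upgrade from continuity of $\nabla_x a,\partial_x F_i$ to \emph{uniform} continuity on a fixed compact set.
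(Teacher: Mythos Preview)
Your proposal is correct and follows essentially the same route as the paper's proof: the same add/subtract decomposition of the integral identity for $\lambda_u^{\theta_1}-\lambda_u^{\theta_2}$, the same use of Lemma~\ref{lem:bound_X} to reduce to a compact set $K_R$ on which $\nabla_x a$ and the $\partial_x F_i$ are uniformly continuous, the same invocation of Lemma~\ref{lem:unif_cont_Theta_X} and Lemma~\ref{lem:bound_Lambda}, and finally Gr\"onwall. The only cosmetic differences are that the paper adds/subtracts the symmetric cross term $\lambda_u^{\theta_2}M_u(\cdot,\theta_1)$ and does not spell out the time-reversal before applying Lemma~\ref{lem:Gron}.
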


\begin{proof}
From the definition \eqref{eq:def_Lambda} and from
\eqref{eq:def_lambda_theta_u}, it follows that
\begin{equation} \label{eq:unif_cont_Lambda_1}
\begin{split}
|\Lambda_u(t,\theta_1) &- \Lambda_u (t,\theta_2)|_2
= 
|\lambda_u^{\theta_1}(t) - \lambda_u^{\theta_2}(t)|_2\\
& \leq |\nabla_x a (x_u^{\theta_1}(1),\theta_1)
- \nabla_x a (x_u^{\theta_2}(1),\theta_2)|_2 
\\
&\quad +\int_t^1 
|\lambda_u^{\theta_2}(\tau)|_2
\left(
\left|
\frac{\partial F_0(x_u^{\theta_1}(\tau), \theta_1)}{\partial x}
-
\frac{\partial F_0(x_u^{\theta_2}(\tau), \theta_2)}{\partial x}
\right|_2 \right.
\\
& \qquad\qquad\qquad\qquad\quad \left.
+ \sum_{i=1}^k |u_i(\tau)|
\left|
\frac{\partial F_i(x_u^{\theta_1}(\tau), \theta_1)}{\partial x}
-
\frac{\partial F_i(x_u^{\theta_2}(\tau), \theta_2)}{\partial x}
\right|_2
\right)
\,d\tau
\\
&\quad +\int_t^1 
|\lambda_u^{\theta_1}(\tau) - \lambda_u^{\theta_2}(\tau)|_2 \left|
\frac{\partial F_0(x_u^{\theta_1}(\tau), \theta_1)}{\partial x}
+ \sum_{i=1}^ku_i(\tau) 
\frac{\partial F_i(x_u^{\theta_1}(\tau), \theta_1)}{\partial x}
\right|_2\, d\tau
\end{split}
\end{equation}
for every $t\in [0,1]$ and for every 
$\theta_1,\theta_2 \in \Theta$.
In virtue of Lemma~\ref{lem:bound_X}, there exists
a compact set $K_R \subset \R^n$ such that
the image $X_u([0,1],\Theta) \subset K_R$ for every
$u\in \U$ with $||u||_{L^2}\leq R$. 
The continuity assumptions guarantee that 
$(x,\theta)\mapsto 
\frac{\partial}{\partial x}F_i(x,\theta)$ for $ i=0,\ldots,k$ 
and $(x,\theta)\mapsto \nabla_x a(x,\theta)$
are uniformly continuous when restricted to
$K_R \times \Theta$.
Moreover, in virtue of 
Lemma~\ref{lem:unif_cont_Theta_X}, we deduce that
the applications defined as
$\theta \mapsto 
\frac{\partial}{\partial x}F_i(X_u(t,\theta),\theta)$ for $ i=0,\ldots,k$
and  
$\theta \mapsto \nabla_x a(X_u(1,\theta),\theta)$
are uniformly equi-continuous for every choice of
$t\in [0,1]$ and $u\in \U$ with $||u||_{L^2}\leq R$.
Let $\omega'_R:\R_+\to\R_+$ be a modulus of
continuity for all these functions.
Hence, using Lemma~\ref{lem:bound_Lambda},
from \eqref{eq:unif_cont_Lambda_1} 
we obtain that there exists $C_R>0$ such that
\begin{equation*}
\begin{split}
|\Lambda_u(t,\theta_1)- \Lambda_u (t,\theta_2)|_2
&= 
|\lambda_u^{\theta_1}(t) - \lambda_u^{\theta_2}(t)|_2\\
& \leq \omega'_R(|\theta_1-\theta_2|_2)
+ C_R(1 + ||u||_{L^1})\omega'_R(|\theta_1-\theta_2|_2)
\\
& \quad +C_R\int_t^1
|\lambda_u^{\theta_1}(\tau) - \lambda_u^{\theta_2}(\tau)|_2 \left(
1
+ \sum_{i=1}^k|u_i(\tau)| 
\right)
\, d\tau
\end{split}
\end{equation*}
for every $t\in[0,1]$, for every $\theta_1,\theta_2
\in \Theta$ and for every $u\in \U$ with 
$||u||_{L^2}\leq R$.
Then, the thesis \eqref{eq:unif_cont_Theta_Lambda}
follows directly from Lemma~\ref{lem:Gron}.
\end{proof}

Finally, the next results proves the uniform continuity of $\Lambda_u$.

\begin{lemma} \label{lem:unif_cont_Lambda}
Let us assume that the mappings 
$(x,\theta)\mapsto 
\frac{\partial}{\partial x}F_i(x,\theta)$ 
are continuous for every $i=0,\ldots,k$, as well as
the gradient $(x,\theta)\mapsto \nabla_x a(x,\theta)$.
For every $u\in\U$,
let $\Lambda_u:[0,1]\times\Theta \to (\R^n)^*$ be the
application defined in \eqref{eq:def_Lambda}.
Then, for every $R>0$ there exists 
$L_R>0$ and $\omega_R:\R_+\to\R_+$
 such that, if $||u||_{L^2}\leq R$, then
\begin{equation} \label{eq:unif_cont_Lambda}
|\Lambda_u(t_1,\theta_1)-\Lambda_u(t_2,\theta_2)|_2
\leq L_R|t_1-t_2|^{\frac12} +
\omega_R(|\theta_1-\theta_2|_2)
\end{equation}
for every $(t_1,\theta_1),(t_2,\theta_2)
\in [0,1]\times \Theta$, where
$\omega_R$ is a non-decreasing function
satisfying
$\omega(0)= \lim_{r\to0^+}\omega_R(r)=0$.
\end{lemma}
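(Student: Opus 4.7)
The plan is to mimic exactly the argument used for the analogous statement about $X_u$ in Lemma~\ref{lem:unif_cont_X}: the two preceding auxiliary lemmas in this subsection already handle the two ``coordinate directions'' separately, so all that remains is to glue them together via the triangle inequality.

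More precisely, I would fix $R>0$ and let $L_R>0$ and $\omega_R:\R_+\to\R_+$ be the constants and modulus of continuity provided, respectively, by Lemma~\ref{lem:unif_holder_time_Lambda} (H\"older-in-time estimate, uniform in $\theta$) and Lemma~\ref{lem:unif_cont_Theta_Lambda} (uniform continuity in $\theta$, uniform in $t$), evaluated at the same $R$. Then, for any two points $(t_1,\theta_1),(t_2,\theta_2)\in[0,1]\times\Theta$ and any admissible control $u\in \U$ with $\|u\|_{L^2}\leq R$, I would insert the intermediate point $\Lambda_u(t_2,\theta_1)$ and write
\[
|\Lambda_u(t_1,\theta_1)-\Lambda_u(t_2,\theta_2)|_2
\leq |\Lambda_u(t_1,\theta_1)-\Lambda_u(t_2,\theta_1)|_2
+ |\Lambda_u(t_2,\theta_1)-\Lambda_u(t_2,\theta_2)|_2.
\]
The first term is bounded by $L_R|t_1-t_2|^{1/2}$ using Lemma~\ref{lem:unif_holder_time_Lambda} (applied at the fixed parameter $\theta_1$), while the second is bounded by $\omega_R(|\theta_1-\theta_2|_2)$ using Lemma~\ref{lem:unif_cont_Theta_Lambda} (applied at the fixed time $t_2$). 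Summing these two bounds produces exactly \eqref{eq:unif_cont_Lambda}, with the same $L_R$ and $\omega_R$ inherited from the two previous lemmas, and in particular $\omega_R$ remains non-decreasing and satisfies $\omega_R(0)=\lim_{r\to 0^+}\omega_R(r)=0$.

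There is no real obstacle here: all the analytic work (Gr\"onwall estimates, use of $H^1\hookrightarrow C^{0,1/2}$ via Proposition~\ref{prop:sob_Holder}, uniform continuity of the data $\nabla_x a$ and $\partial_x F_i$ on compact sets guaranteed by Lemma~\ref{lem:bound_X} and Lemma~\ref{lem:bound_Lambda}) has already been carried out in Lemma~\ref{lem:unif_holder_time_Lambda} and Lemma~\ref{lem:unif_cont_Theta_Lambda}. The present lemma is simply the triangle-inequality combination of those two results, in exact parallel with how Lemma~\ref{lem:unif_cont_X} was deduced from Lemma~\ref{lem:unif_holder_time} and Lemma~\ref{lem:unif_cont_Theta_X}.
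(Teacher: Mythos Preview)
Your proposal is correct and follows exactly the same approach as the paper's own proof, which simply states that the result follows from the triangle inequality combined with Lemma~\ref{lem:unif_holder_time_Lambda} and Lemma~\ref{lem:unif_cont_Theta_Lambda}. You have spelled out the intermediate point $\Lambda_u(t_2,\theta_1)$ explicitly, but the idea is identical.
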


\begin{proof}
The thesis \eqref{eq:unif_cont_Lambda} follows directly
from the triangular inequality and from
Lemma~\ref{lem:unif_holder_time_Lambda} and
Lemma~\ref{lem:unif_cont_Theta_Lambda}.
\end{proof}

\newpage

\vspace{0.3cm}

\noindent
(A. Scagliotti).

\vspace{5pt} 

\noindent
\textsc{``School of Computation, Information and Technology'',\\ TU Munich, Garching b. M\"unchen, Germany.}
\vspace{3pt}

\noindent
\textsc{Munich Center for Machine Learning (MCML), Germany.}

\vspace{5pt} 
\noindent
\textit{Email address:} \texttt{scag -at- ma.tum.de}


\begin{thebibliography}{22}

%\bibitem{ABM}
%H. Attouch, G. Buttazzo, G. Michaille.
%{\it Variational Analysis in Sobolev and
%BV spaces.}
%Series on Optimization, SIAM-MPS (2005).
%doi: 10.1137/1.9781611973488
%
%\bibitem{AG} A.A. Agrachev, R.V. Gamkrelidze.
%{ The exponential representation of flows and the chronological calculus.}
%{\it USSR Sbornik} 35:727--785 (1975).

\bibitem{AS} A. Agrachev, Yu. Sachkov.
{\it Control Theory from the Geometric Viewpoint.}
Encyclopaedia of Mathematical Sciences,
Springer-Verlag Berlin Heidelberg (2004).
doi: 10.1007/978-3-662-06404-7


\bibitem{ABS16}
A. Agrachev, Y. Baryshnikov, A. Sarychev. 
{Ensemble controllability by Lie algebraic
methods.}
{\it ESAIM: Cont., Opt. and Calc. Var.},
22:921-938 (2016).
doi: 10.1051/cocv/2016029


\bibitem{AuBoSi18}
N. Augier, U. Boscain, M. Sigalotti. 
Adiabatic ensemble control of a continuum of quantum 
systems. 
{\it SIAM J. Control Optim.},
56(6):4045-4068 (2018).
doi: 10.1137/17M1140327



\bibitem{BCR10}
K. Beauchard, J.-M. Coron, P. Rouchon.
{Controllability issues for continuousspectrum
systems and ensemble controllability of Bloch equations}
{\it Comm. Math. Phys.}, 296(2):525-557
(2010).
doi: 10.1007/s00220-010-1008-9



\bibitem{BST15}
M. Belhadj, J. Salomon, G. Turinici.
{Ensemble controllability and discrimination of
perturbed bilinear control systems on connected, 
simple, compact Lie groups.}
{\it Eur. J. Control}, 22:23-29 (2015).
doi: 10.1016/j.ejcon.2014.12.003




\bibitem{BK19}
P. Bettiol, N. Khalil.
Necessary optimality conditions for average cost 
minimization problems.
\textit{Discete Contin. Dyn. Syst. - B.},
24(5): 2093-2124 (2019).
doi: 10.3934/dcdsb.2019086 

\bibitem{BCFH22}
B. Bonnet, C. Cipriani, M. Fornasier, H. Huang.
{A measure theoretical approach to the mean-field maximum principle for training NeurODEs.}
\textit{Nonlinear Analysis},
227: 113-161 (2023).
doi: 10.1016/j.na.2022.113161

\bibitem{B11}
H. Brezis.
{\it Functional Analysis, Sobolev Spaces and 
Partial Differential Equations.}
Universitext, 
Springer New York NY (2011).
doi: 10.1007/978-0-387-70914-7


\bibitem{Bro09}
R.W. Brockett.
{On the control of a flock by a leader.}
{\it Proc. Steklov Inst. Math.},
268:49-57 (2010).
doi: 10.1134/S0081543810010050

\bibitem{CL}
F. Chernousko, A. Lyubushin. 
Method of successive approximations for solution of 
optimal control problems. 
{\it Opt. Control 
Appl. Methods}, 3(2):101-114 (1982).


\bibitem{ChiGau18}
F.C. Chittaro, J.P. Gauthier.
{Asymptotic ensemble stabilizability of the Bloch 
equation.}
{\it Sys. Control Lett.},
113:36-44 (2018).
doi: 10.1016/j.sysconle.2018.01.008


\bibitem{C05}
E. \c{C}inlar. 
{\it Probability and Stochastics.}
Graduate Texts in Mathematics,
Springer-Verlag, New York (2010).
doi: 10.1007/978-0-387-87859-1


\bibitem{DS21}
G. Dirr, M. Sch\"{o}nlein.
{Uniform and $L^q$-ensemble reachability of parameter-dependent linear systems}
{\it J. Differ. Eq.}, 
283:216-262 (2021).
doi: 10.1016/j.jde.2021.02.032
\bibitem{DK74}
J. Daleckii, M. Krein.
{\it Stability of solutions of differential equations in Banach space}.
Translations of Mathematical Monographs,
American Mathematical Soc. (1974).

\bibitem{D93} G. Dal Maso. 
{\it An Introduction to $\Gamma$-convergence.}
Progress in nonlinear differential equations and 
their applications,
Birkh\"auser Boston MA (1993).


\bibitem{EK86}
S. Ethier, T. Kurtz.
{\it Markov Processes: Characterization and Convergence.} Wiley series in probability and 
statistics,
John Wiley \& Sons New York (1986). 



\bibitem{H80} J. Hale.
{\it Ordinary Differential Equations.}
Krieger Publishing Company (1980).


\bibitem{LGV20}
P. Lambrianides, Q. Gong, D. Venturi.
{A new scalable algorithm for computational optimal 
control under uncertainty.}
{\it J. Comput. Phys.}, 420
(2020). doi: 10.1016/j.jcp.2020.109710


\bibitem{LK06}
J.-S Li, N. Khaneja.
Control of inhomogeneous quantum ensembles,
{\it Phys. Rev. A}, 73(3) (2006).
doi: 10.1103/PhysRevA.73.030302

\bibitem{LK09}
J.-S Li, N. Khaneja.
Ensemble control of Bloch equations.
{\it IEEE Transat. Automat. Control},
54:528-536 (2009). doi: 10.1109/TAC.2009.2012983


\bibitem{MP18}
R. Murray, M. Palladino.
A model for system uncertainty in
reinforcement learning.
\textit{Syst. Control Lett.},
 122:24-31 (2018).
 doi: 10.1016/j.sysconle.2018.09.011
 
\bibitem{MSS21} 
Q. M\'{e}rigot, F. Santambrogio, C. Sarrazin.
Non-asymptotic convergence bounds for Wasserstein approximation using point clouds.
\textit{Adv. Neur. Inf. Process Syst.},
34:12810-12821 (2021).



\bibitem{N18}
Yu. Nesterov.
{\it Lectures on Convex Optimization}.
Springer Optimization, 
Springer Nature Switzerland AG (2018).
doi: 10.1007/978-3-319-91578-4


\bibitem{PPF22}
A. Pacifico, A. Pesare, M. Falcone. 
A New Algorithm for the LQR
Problem with Partially Unknown Dynamics. 
\textit{In: Large-Scale Scientific Computing 2021. Lecture Notes in Computer Science, vol 13127, Springer} (2022). 
doi: 10.1007/978-3-030-97549-4\_37


\bibitem{PPF21}
A. Pesare, M. Palladino, M. Falcone.
Convergence of the Value Function in
Optimal Control Problems with Unknown Dynamics.  \textit{2021 European Control Conference (ECC)}, 
pp. 2426-2431 (2021).
doi: 10.23919/ECC54610.2021.9655079

\bibitem{PPF21_2}
A. Pesare, M. Palladino, M. Falcone.
Convergence results for an averaged
LQR problem with applications to Reinforcement Learning.
\textit{ Math. Control Signals Syst.}, 33:379–411 (2021).
doi: 10.1007/s00498-021-00294-y


\bibitem{PRG}
C. Phelps, J.O. Royset, Q. Gong.
Optimal control of uncertain systems using sample average approximations.
{\it SIAM J. Control  Optim.},
54(1): 1-29 (2016). 
doi: 10.1137/140983161


\bibitem{RuLi12}
J. Ruths, J.-S. Li.
Optimal control of inhomogenous ensembles.
{\it IEEE Trans. Aut. Control},
57(8):2021-2032 (2012).
doi: 10.1109/TAC.2012.2195920


\bibitem{SS80}
Y. Sakawa, Y. Shindo.
{On global convergence of an algorithm for optimal 
control.}
{\it IEEE Trans. Automat. Contr.},
25(6):1149-1153 (1980).


\bibitem{S1}
A. Scagliotti.
{A gradient flow equation for optimal control
problems with end-point cost.}
{\it J. Dyn. Control Syst.}, 
(2022). doi: 10.1007/s10883-022-09604-2

\bibitem{S2}
A. Scagliotti.
{Deep Learning approximation of diffeomorphisms
via linear-control systems.}
{\it Math. Control Relat. Fields}
(2022). doi: 10.3934/mcrf.2022036

\bibitem{T75}
R. Triggiani.
{Controllability and observability in 
Banach spaces with bounded operators.}
{\it SIAM J. Control},
13(2): 462-491 (1975).
doi: 10.1137/0313028


{\bibitem{V05}
R. B. Vinter.
{Minimax Optimal Control.}
\textit{SIAM J. Control Optim.},
44(3): 939-968 (2005).
doi: 10.1137/S0363012902415244 }









%
 
%\bibitem{P98}
%B.G. Pachpatte.
%{\it Inequalities for differential and integral equations.} Mathematics in Science and Technology,
%{Elsevier (1997).}



\end{thebibliography}
\end{document}